\numberwithin{equation}{section}
\numberwithin{figure}{section}
\theoremstyle{plain}
\newtheorem{thm}{\protect\theoremname}
\theoremstyle{definition}
\newtheorem{defn}[thm]{\protect\definitionname}
\theoremstyle{plain}
\newtheorem*{thm*}{\protect\theoremname}
\theoremstyle{definition}
\newtheorem*{defn*}{\protect\definitionname}
\theoremstyle{remark}
\newtheorem{rem}[thm]{\protect\remarkname}
\theoremstyle{plain}
\newtheorem{prop}[thm]{\protect\propositionname}
\theoremstyle{plain}
\newtheorem{lem}[thm]{\protect\lemmaname}
\theoremstyle{definition}
\newtheorem{example}[thm]{\protect\examplename}
\theoremstyle{plain}
\newtheorem{cor}[thm]{\protect\corollaryname}
\providecommand{\corollaryname}{Corollary}
\providecommand{\definitionname}{Definition}
\providecommand{\examplename}{Example}
\providecommand{\lemmaname}{Lemma}
\providecommand{\propositionname}{Proposition}
\providecommand{\remarkname}{Remark}
\providecommand{\theoremname}{Theorem}
\begin{document}
\title{Poisson-Orlicz norm and infinite ergodic theory}
\author{Emmanuel Roy}
\address{Laboratoire Analyse G\'eom\'etrie et Applications, Universit\'e Paris 13,
Institut Galil\'ee, 99 avenue Jean-Baptiste Cl\'ement, 93430 Villetaneuse,
France}
\email{roy@math.univ-paris13.fr}
\subjclass[2000]{37A40, 37A50, 28D05, 60G55}
\keywords{Infinite ergodic theory, Poisson process, stochastic integral, Orlicz
space}
\begin{abstract}
Urbanik's theorem for a Poisson process on an infinite measure space
$\left(X,\mathcal{A},\mu\right)$ relates integrability of stochastic
integrals to a particular Orlicz function space $L^{\Phi}\left(\mu\right)$
on which the $L^{1}$-norm of the Poisson process induces a norm (called
\emph{Poisson-Orlicz} in the sequel) that is shown to be equivalent
to the classical gauge and Orlicz norms. 

We obtain a full characterization of stochastic integrals using difference
operators that, together with a simple duality argument, allows to
derive Urbanik's theorem as well as an optimal inequality between
the Orlicz and the Poisson-Orlicz norm.

In a second part, we show that the Poisson-Orlicz norm plays a role
in infinite Ergodic Theory where it is seen as an alternative to the
$L^{1}$-norm to identify several dynamical invariants that the latter
fails to identify. We also show that, whereas the $L^{1}$-norm fully
characterizes \emph{exact} endomorphisms (Lin's theorem), Poisson-Orlicz
norm fully characterizes \emph{remotely infinite} endomorphisms.
\end{abstract}

\maketitle

\section{Introduction}

\subsection{The context: Integrability of stochastic integrals, Urbanik's theorem}

Let $\left(X,\mathcal{A},\mu\right)$ be a non-atomic \uline{infinite}
measure Borel space. To this space, we can always associate a probability
space $\left(X^{*},\mathcal{A}^{*},\mu^{*}\right)$ (Section \ref{subsec:Poisson-space})
that naturally supports a Poisson point process with intensity $\mu$.
This space can be considered as a companion space to $\left(X,\mathcal{A},\mu\right)$
that offers a probabilistic point of view to this infinite measure
object. Our starting point is the observation that some elementary
integrability properties on $\left(X,\mathcal{A},\mu\right)$ are
seen through integrability properties of the so-called \emph{stochastic
integrals} (Section \ref{subsec:Stochastic-integrals}) with respect
to the Poisson point process seen as a random element of $\left(X^{*},\mathcal{A}^{*},\mu^{*}\right)$,
it can be formulated this way: Which functions $f\in L^{0}\left(\mu\right)$
can be integrated against the Poisson point process ? This question
is well understood (see \cite{Maruyama70IDproc} for example): upon
compensating this point process, a stochastic integral $I\left(f\right)$
is well defined if and only $f$ satisfies $\int_{X}f^{2}\wedge1d\mu<+\infty$.
It happens that the set
\[
L^{\chi}\left(\mu\right):=\left\{ f\in L^{0}\left(\mu\right),\:\int_{X}f^{2}\wedge1d\mu<+\infty\right\} 
\]

is a so-called \emph{generalized Orlicz linear space} (see \cite{Rao1991},
page 400). It illustrates the following idea: some function spaces
over $\left(X,\mathcal{A},\mu\right)$ have a very natural interpretation
in terms of Poisson space. This idea culminates when looking at $L^{2}\left(\mu\right)$
(Section \ref{sec:Square-integrable-stochastic-integrals}) since
we have the equivalence
\[
I\left(f\right)\in L^{2}\left(\mu^{*}\right)\Longleftrightarrow f\in L^{2}\left(\mu\right),
\]

and, setting $I_{1}\left(f\right):=I\left(f\right)-\mathbb{E}_{\mu^{*}}\left[I\left(f\right)\right]$,
we have
\[
\left\Vert I_{1}\left(f\right)\right\Vert _{L^{2}\left(\mu^{*}\right)}=\left\Vert f\right\Vert _{L^{2}\left(\mu\right)}.
\]

This has tremendous consequences as it yields the decomposition of
$L^{2}\left(\mu^{*}\right)$ as the Fock space over $L^{2}\left(\mu\right)$
and this plays a great role in quantum physics, stochastic geometry,
Poisson-Malliavin calculus, etc... Less neglected and our main focus
in this paper is the integrable case (Section \ref{sec:Integrable-stochastic-integrals}):
Letting $\Phi$ be defined on $\mathbb{R}_{+}$ by
\[
x\mapsto\begin{cases}
x^{2} & \text{if }x\le1\\
2x-1 & \text{if }x>1
\end{cases},
\]

$\Phi$ is then a so-called \emph{Young function} of the\emph{ Orlicz
space} $L^{\Phi}\left(\mu\right)$
\[
L^{\Phi}\left(\mu\right):=\left\{ f\in L^{0}\left(\mu\right),\:\int_{X}\Phi\left(\left|f\right|\right)d\mu<\infty\right\} 
\]

that turns to be a Banach space when endowed with one of the two naturally
defined equivalent norms, the \emph{gauge} norm $N_{\Phi}$ or the
\emph{Orlicz} norm $\left\Vert \cdot\right\Vert _{\Phi}$ associated
to $\Phi$. It can then be verified that
\[
I\left(f\right)\in L^{1}\left(\mu^{*}\right)\Longleftrightarrow f\in L^{\Phi}\left(\mu\right),
\]

and we have the remarkable result of Urbanik that we rephrase in our
context:
\begin{thm}
(Urbanik, \cite{Urbanik1967}) The formula

\[
\left\Vert f\right\Vert _{*}:=\left\Vert I_{1}\left(f\right)\right\Vert _{L^{1}\left(\mu^{*}\right)}
\]

defines a norm on $L^{\Phi}\left(\mu\right)$ that is equivalent to
the gauge or Orlicz norm. In particular $\left(L^{\Phi}\left(\mu\right),\left\Vert \cdot\right\Vert _{*}\right)$
is a Banach space.
\end{thm}

The Young function defining $L^{\Phi}\left(\mu\right)$ is highly
non-unique and so are the gauge or Orlicz norms, that is, replacing
$\Phi$ by another appropriately chosen \emph{Young function} would
lead to another equivalent gauge or Orlicz norm on the same space
$L^{\Phi}\left(\mu\right)$. In contrast, $\left\Vert \cdot\right\Vert _{*}$
only depends on the measure $\mu$, we propose to call this norm the
\emph{Poisson-Orlicz} norm.

\subsection{A characterization of stochastic integrals, a proof of Urbanik's
theorem with an optimal bound.}

Our first result consists into giving a complete characterization
of stochastic integrals (Theorem \ref{prop:difference operators stoch int})
by means of \emph{difference operators} (which is a popular tool used
in the quantum physics usage of Poisson processes) that is the full
generalization of a result in \cite{LastPen11Fock} that was restricted
to the $L^{2}$-case and might have an interest on its own.

As a nearly immediate consequence, we get the ``Banach space part''
of Urbanik's theorem. The equivalence of norms being derived by very
simple duality considerations, leading to an optimal inequality: for
any $f\in L^{\Phi}\left(\mu\right)$,
\[
\left\Vert f\right\Vert _{*}\le\left\Vert f\right\Vert _{\Phi}.
\]

We observe also that from the general theory of Orlicz space (\cite{Rao1991}),
for any $f\in L^{\Phi}\left(\mu\right)$,

\[
\left\Vert f\right\Vert _{\Phi}\le2N_{\Phi}\left(f\right)
\]

that therefore yields
\[
\left\Vert f\right\Vert _{*}\le2N_{\Phi}\left(f\right),
\]

improving a result of Marcus and Rosinski (\cite{Marcus2001}) that
obtained
\[
\left\Vert f\right\Vert _{*}\le\left(2.125\right)N_{\Phi}\left(f\right).
\]

\subsection{Poisson-Orlicz norm and infinite Ergodic theory}

We then turn to Ergodic Theory, adding a measure preserving transformation
$T$ on $\left(X,\mathcal{A},\mu\right)$ (and the corresponding $T_{*}$
on $\left(X^{*},\mathcal{A}^{*},\mu^{*}\right)$) to the picture (Section
\ref{subsec:Poisson-suspension}). Our aim is to show the interest
of the Poisson-Orlicz norm within infinite Ergodic Theory allowing
to characterize some of the main dynamical invariants.

\subsubsection{$L^{1}$-Birkhoff sums convergence in infinite measure}

We recall this classical fact: if $\left(X,\mathcal{A},\mu,T\right)$
is an ergodic dynamical system then, whereas the Birkhoff sums, $\frac{1}{n}\sum_{k=1}^{n}f\circ T^{k}\left(x\right)$,
associated to any $f\in L^{1}\left(\mu\right)$ converge pointwise
to $0$ as $n$ tends to $+\infty$ for $\mu$-a.e. $x\in X$ (see
\cite{Hopf1937}), the convergence cannot hold for all $f$ in $L^{1}\left(\mu\right)$
with respect to $\left\Vert \cdot\right\Vert _{1}$. The same result
occurs when we replace the ergodicity requirement by the absence of
absolutely continuous $T$-invariant probability measure. As such
the $\left(L^{1}\left(\mu\right),\left\Vert \cdot\right\Vert _{1}\right)$-isometry
$T$ doesn't belong to the class of ``\emph{mean ergodic operators}'':
\begin{defn}
(see Section 8.4, page 136 in \cite{Eisner2015}) Let $\varphi$ be
a continuous linear operator defined on a Hausdorff topological vector
space $H$. $\varphi$ is said to be \emph{mean ergodic} if, for any
$h\in H$, $\frac{1}{n}\sum_{k=1}^{n}\varphi^{k}h$ converges as $n$
tends to $+\infty$.
\end{defn}

In contrast, replacing $\left(L^{1}\left(\mu\right),\left\Vert \cdot\right\Vert _{1}\right)$
by $\left(L^{p}\left(\mu\right),\left\Vert \cdot\right\Vert _{p}\right)$,
$1<p<\infty$, $T$ becomes a mean ergodic operator and one has:
\begin{thm*}
The dynamical system $\left(X,\mathcal{A},\mu,T\right)$ has no absolutely
continuous $T$-invariant probability measure if and only if, for
every $f\in L^{p}\left(\mu\right)$:
\[
\frac{1}{n}\sum_{k=1}^{n}f\circ T^{k}\to_{\left\Vert \cdot\right\Vert _{p}}0,
\]

as $n$ tends to $+\infty$.
\end{thm*}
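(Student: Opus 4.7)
The plan is to apply the Mean Ergodic Theorem to the Koopman operator $U_{T}\colon f\mapsto f\circ T$ acting on $L^{p}(\mu)$ with $1<p<\infty$. Because $T$ preserves $\mu$, $U_{T}$ is a linear isometry of $L^{p}(\mu)$, in particular power-bounded, and $L^{p}(\mu)$ is reflexive; by the Yosida--Kakutani mean ergodic theorem---that is, the reflexive Banach-space version of the notion of mean ergodicity recalled above---the Cesàro averages $\tfrac{1}{n}\sum_{k=1}^{n}U_{T}^{k}f$ converge in $\Vert\cdot\Vert_{p}$-norm for every $f\in L^{p}(\mu)$ to the projection of $f$ onto the closed subspace $\mathrm{Fix}(U_{T}):=\{g\in L^{p}(\mu):g\circ T=g\}$. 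The limit is therefore $0$ for every $f$ if and only if $\mathrm{Fix}(U_{T})=\{0\}$, and the statement reduces to proving
\[
\text{no absolutely continuous $T$-invariant probability measure}\;\Longleftrightarrow\;\mathrm{Fix}(U_{T})=\{0\}.
\]

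For the first implication, any non-zero $g\in\mathrm{Fix}(U_{T})$ automatically produces an absolutely continuous $T$-invariant probability measure. Indeed $|g|$ also lies in $\mathrm{Fix}(U_{T})$, and since it is not identically $0$ one may pick $c>0$ with $0<\mu(\{|g|>c\})\le c^{-p}\Vert g\Vert_{p}^{p}<\infty$, the finiteness being Chebyshev's inequality. The level set $A_{c}:=\{|g|>c\}$ then satisfies $T^{-1}A_{c}=A_{c}$, so the normalized restriction $\nu(\cdot):=\mu(\cdot\cap A_{c})/\mu(A_{c})$ is an absolutely continuous $T$-invariant probability.

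Conversely, given an a.c.\ $T$-invariant probability $\nu=h\,d\mu$, I need to exhibit a non-zero element of $\mathrm{Fix}(U_{T})\subset L^{p}(\mu)$. When $T$ is an automorphism, the joint $T$-invariance of $\mu$ and $\nu$ forces $h\circ T=h$ $\mu$-a.e., and then for any $M>0$ the truncation $h_{M}:=\min(h,M)$ is still $T$-invariant and sits in $L^{p}(\mu)$ via the bound $\Vert h_{M}\Vert_{p}^{p}\le M^{p-1}\Vert h_{M}\Vert_{1}\le M^{p-1}$. The main obstacle will be the non-invertible endomorphism case, for then the density only satisfies the Perron--Frobenius identity $\widehat{T}h=h$ rather than the stronger pointwise relation $h\circ T=h$; to circumvent this I would appeal to the Hopf decomposition $X=C\sqcup D$, noting that any a.c.\ $T$-invariant finite measure is automatically concentrated on the conservative part $C$, and then use Hopf's ratio ergodic theorem (applied to $h$ and to the indicator of a reference set of finite measure) to extract a strictly $T$-invariant set of finite positive $\mu$-measure, the indicator of which supplies the required fixed vector. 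Alternatively, one may lift to the natural extension of $(X,\mathcal{A},\mu,T)$, where $T$ becomes invertible, and pull back the fixed function so obtained.
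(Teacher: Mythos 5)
Your proof is correct and follows exactly the route the paper indicates for this statement: the paper gives no argument beyond the remark that mean ergodicity of $T$ on $L^{p}\left(\mu\right)$, $1<p<\infty$, follows from reflexivity via Lorch's theorem, and you invoke precisely that and then identify when the mean ergodic projection vanishes. The reduction to $\mathrm{Fix}\left(U_{T}\right)=\left\{ 0\right\} $ is sound, the level-set argument producing an absolutely continuous invariant probability from a non-zero fixed vector is complete, and so is the truncation argument in the invertible case. The one soft spot is that the converse for a non-invertible endomorphism is announced as a plan rather than executed; note that it closes more quickly than either of your two proposed routes: by the Hopf--Dunford--Schwartz pointwise ergodic theorem the averages $\frac{1}{n}\sum_{k=1}^{n}1_{B}\circ T^{k}$ converge $\mu$-a.e.\ for any $B$ with $\mu\left(B\right)<\infty$, and choosing such a $B$ with $\nu\left(B\right)>0$ (possible since $\mu$ is $\sigma$-finite and $\nu\ll\mu$), the $\mu$-a.e.\ limit must agree $\nu$-a.e.\ with the Birkhoff limit $\mathbb{E}_{\nu}\left[1_{B}\mid\mathcal{I}\right]$ computed in the probability space $\left(X,\nu,T\right)$, whose $\nu$-integral is $\nu\left(B\right)>0$. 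Hence the $\left\Vert \cdot\right\Vert _{p}$-limit of the Ces\`aro means of $1_{B}$ is a non-zero element of $\mathrm{Fix}\left(U_{T}\right)$, which is exactly the contrapositive you need.
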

The fact that $T$ is a mean ergodic operator on $\left(L^{p}\left(\mu\right),\left\Vert \cdot\right\Vert _{p}\right)$,
$1<p<\infty$, follows, for example, by the fact that they are, unlike
$\left(L^{1}\left(\mu\right),\left\Vert \cdot\right\Vert _{1}\right)$,
reflexive spaces, by a result due to Lorch (1939).

Let us illustrate another instance where $\left\Vert \cdot\right\Vert _{1}$
can't help in identifying another invariant:
\begin{defn*}
$\left(X,\mathcal{A},\mu,T\right)$ is of \emph{zero type} if for
every sets $A$ and $B$ in $\mathcal{A}_{f}$:
\[
\mu\left(A\cap T^{-n}B\right)\to0
\]
\end{defn*}
as $n$ tends to $+\infty$, 

We recall this classical result:
\begin{thm*}
(Blum-Hanson, \cite{Blum1960}) Let $\left(\Omega,\mathcal{F},\mathbb{P},S\right)$
be a \uline{probability} measure-preserving dynamical system and
let $1\le p<\infty$. Then $\left(\Omega,\mathcal{F},\mathbb{P},S\right)$
is mixing if and only if, for every strictly increasing sequence $\left\{ n_{k}\right\} _{k\in\mathbb{N}}$
of integers and every $F\in L^{p}\left(\mathbb{P}\right)$:
\[
\frac{1}{n}\sum_{k=1}^{n}F\circ S^{n_{k}}\to_{\left\Vert \cdot\right\Vert _{p}}\mathbb{E}_{\mathbb{P}}\left[F\right]
\]

as $n$ tends to $+\infty$.
\end{thm*}
As shown in \cite{KrenSuch69MixInf}, replacing $\left(\Omega,\mathcal{F},\mathbb{P}\right)$
by an infinite-measure space, ``mixing'' by ``zero type'' and
the limit by $0$, then the theorem holds for $1<p<\infty$ but fails
of course again for $p=1$.

\subsubsection{$\left\Vert \cdot\right\Vert _{*}$ as an intrinsic alternative to
$\left\Vert \cdot\right\Vert _{1}$}

Replacing $\left\Vert \cdot\right\Vert _{1}$ by $\left\Vert \cdot\right\Vert _{*}$,
we are then able to prove (Theorem \ref{thm:ergodicL12}):
\begin{thm*}
The dynamical system $\left(X,\mathcal{A},\mu,T\right)$ has no absolutely
continuous $T$-invariant probability measure if and only if, for
every $f\in L^{1}\left(\mu\right)$ (or $f\in L^{\Phi}\left(\mu\right)$):
\[
\frac{1}{n}\sum_{k=1}^{n}f\circ T^{k}\to_{\left\Vert \cdot\right\Vert _{*}}0
\]

as $n$ tends to $+\infty$.
\end{thm*}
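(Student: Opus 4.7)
The strategy is to transfer the problem to the Poisson suspension of $(X,\mathcal{A},\mu,T)$, where standard mean-ergodic arguments apply because the ambient space is a probability space, and then to descend via a Poisson--Orlicz comparison relating $\|\cdot\|_\Phi$ on $L^\Phi(\mu)$ to the $L^1$-norm of the compensated stochastic integral.

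Since $T$ preserves $\mu$, the composition operator $f\mapsto f\circ T$ is a $\|\cdot\|_\Phi$-isometry of $L^\Phi(\mu)$, so the Cesàro averages $A_n f:=\frac{1}{n}\sum_{k=1}^{n} f\circ T^k$ have operator norm at most $1$ on $(L^\Phi(\mu),\|\cdot\|_\Phi)$. Because $L^1(\mu)$ is $\|\cdot\|_\Phi$-dense in $L^\Phi(\mu)$ (as recalled in the introduction), a Banach--Steinhaus argument reduces the convergence $\|A_n f\|_\Phi\to 0$ to the case $f\in L^1(\mu)$.

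Let $(X^{*},\mathcal{A}^{*},\mu^{*},T_{*})$ denote the Poisson suspension of $(X,\mathcal{A},\mu,T)$ and, for $f\in L^1(\mu)$, write $I(f):=\int_X f\,dN-\int_X f\,d\mu$ for its compensated stochastic integral. Campbell's formula gives $I(f)\in L^1(\mu^{*})$ with $\mathbb{E}_{\mu^{*}}[I(f)]=0$, and the intertwining identity $I(f\circ T)=I(f)\circ T_{*}$ yields $I(A_n f)=\frac{1}{n}\sum_{k=1}^{n} I(f)\circ T_{*}^{k}$. The key structural ingredient, which the stochastic-integral sections of the paper are designed to supply, is a Poisson--Orlicz comparison producing a constant $c>0$ such that $\|g\|_\Phi\le c\,\mathbb{E}_{\mu^{*}}|I(g)|$ for every $g\in L^1(\mu)$. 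Additionally, the absence of an absolutely continuous $T$-invariant probability measure is equivalent to the ergodicity of the probability-preserving automorphism $T_{*}$ on $(X^{*},\mu^{*})$: a nonzero $T$-invariant density $g\in L^1(\mu)$ produces the nontrivial $T_{*}$-invariant element $I(g)$ of the first chaos, while conversely any $T_{*}$-invariant element produces, via the chaos decomposition and squaring, a nonzero $T$-invariant $L^1$ density.

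Assuming the hypothesis, $T_{*}$ is an ergodic automorphism of the probability space $(X^{*},\mu^{*})$, hence the $L^1$ mean ergodic theorem (Birkhoff pointwise theorem plus uniform integrability on a finite-measure space) gives $\frac{1}{n}\sum_{k=1}^{n} I(f)\circ T_{*}^{k}\to 0$ in $L^1(\mu^{*})$. Feeding this into the Poisson--Orlicz comparison we obtain $\|A_n f\|_\Phi\le c\,\mathbb{E}_{\mu^{*}}|I(A_n f)|\to 0$, which completes the if direction. The converse is immediate: if $\nu=g\,d\mu$ is an absolutely continuous $T$-invariant probability then $g\in L^1(\mu)\subset L^\Phi(\mu)$ is nonzero and $T$-invariant, so $A_n g=g$ does not tend to $0$ in $\|\cdot\|_\Phi$. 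The principal technical obstacle is thus the Poisson--Orlicz comparison $\|g\|_\Phi\le c\,\mathbb{E}_{\mu^{*}}|I(g)|$: it encodes the quantitative match between the integrability condition $\int\min(|g|,g^{2})\,d\mu<\infty$ defining $L^\Phi(\mu)$ and the sharp first-moment estimate for compensated Poisson integrals, and everything above is orchestrated precisely to invoke it.
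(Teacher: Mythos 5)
Your proposal follows essentially the same route as the paper: transfer to the Poisson suspension via the compensated integral $I_{1}$, invoke the equivalence (Theorem \ref{thm:Ergodic basics}) between absence of an absolutely continuous invariant probability and ergodicity of $T_{*}$, apply the $L^{1}$ mean ergodic theorem on the probability space $\left(X^{*},\mu^{*}\right)$, and descend through the Urbanik comparison $\left\Vert \cdot\right\Vert _{\Phi}\le c\left\Vert I_{1}\left(\cdot\right)\right\Vert _{L^{1}\left(\mu^{*}\right)}$, with the isometry-plus-density argument extending the result from $L^{1}\left(\mu\right)$ to $L^{\Phi}\left(\mu\right)$. The only cosmetic difference is in the converse, where the paper exhibits a $T$-invariant set $A\in\mathcal{A}_{f}$ and uses $1_{A}$ rather than the invariant density $g$ itself; for an endomorphism one directly gets only $\widehat{T}g=g$ rather than $g\circ T=g$, so passing to an invariant set of finite positive measure is the cleaner choice of fixed vector.
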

And (Theorem \ref{thm:Blum-Hanson_type}):
\begin{thm*}
The dynamical system $\left(X,\mathcal{A},\mu,T\right)$ is of zero
type if and only if for every strictly increasing sequence $\left\{ n_{k}\right\} _{k\in\mathbb{N}}$
of integers and every $f\in L^{1}\left(\mu\right)$ (or $f\in L^{\Phi}\left(\mu\right)$):
\[
\frac{1}{n}\sum_{k=1}^{n}f\circ T^{n_{k}}\to_{\left\Vert \cdot\right\Vert _{*}}0
\]

as $n$ tends to $+\infty$.
\end{thm*}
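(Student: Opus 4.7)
The plan is to lift the problem to the Poisson suspension of $T$ and reduce to the classical Blum--Hanson theorem. Let $(\Omega,\mathcal{F},\mathbb{P},T_*)$ be the Poisson suspension of $(X,\mathcal{A},\mu,T)$, a probability-preserving dynamical system, and let $\tilde{I}: L^\Phi(\mu) \to L^0(\mathbb{P})$ denote the compensated Poisson stochastic integral. I will freely use two ingredients that the earlier part of the paper is designed to deliver: a classical correspondence asserting that $T$ is of zero type if and only if $T_*$ is mixing; and a Poisson--Orlicz comparison of the form
\[
\|f\|_\Phi \;\asymp\; \mathbb{E}_\mathbb{P}\bigl|\tilde{I}(f)\bigr|,\qquad f\in L^\Phi(\mu),
\]
together with the intertwining identity $\tilde{I}(f\circ T) = \tilde{I}(f)\circ T_*$, immediate from $T$-invariance of $\mu$.

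For the forward direction, assume $T$ is of zero type, so $T_*$ is mixing. Since each $T^{n_k}$ is a $\|\cdot\|_\Phi$-isometry, the Ces\`aro averaging operators are uniformly bounded on $L^\Phi(\mu)$, so it suffices to prove the convergence on the $\|\cdot\|_\Phi$-dense subspace $L^1(\mu)\cap L^2(\mu)$. For $f$ in this subspace, $F := \tilde{I}(f)$ lies in $L^2(\mathbb{P})$ with $\mathbb{E}_\mathbb{P}[F]=0$, and the classical Blum--Hanson theorem applied to the mixing system $T_*$ gives
\[
\frac{1}{n}\sum_{k=1}^n F \circ T_*^{n_k} \xrightarrow[n\to\infty]{L^2(\mathbb{P})} 0.
\]
The intertwining identity and linearity of $\tilde{I}$ identify the left-hand side with $\tilde{I}\bigl(\tfrac{1}{n}\sum_{k=1}^n f \circ T^{n_k}\bigr)$; passing to $L^1(\mathbb{P})$-convergence and invoking the Poisson--Orlicz comparison yields $\|\tfrac{1}{n}\sum_{k=1}^n f \circ T^{n_k}\|_\Phi \to 0$. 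The statement for $f \in L^1(\mu)$ is subsumed since $L^1(\mu) \hookrightarrow L^\Phi(\mu)$.

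For the converse, assume $T$ is not of zero type: there exist $A,B \in \mathcal{A}_f$, $c>0$, and a strictly increasing $\{n_k\}$ with $\mu(A \cap T^{-n_k}B) \ge c$ for all $k$. Taking $f = \mathbf{1}_B \in L^1(\mu)$ gives
\[
\int_A \frac{1}{n}\sum_{k=1}^n \mathbf{1}_B \circ T^{n_k}\,d\mu = \frac{1}{n}\sum_{k=1}^n \mu(A\cap T^{-n_k}B) \ge c.
\]
The Orlicz--H\"older inequality $\bigl|\int_A g\,d\mu\bigr| \le \|g\|_\Phi \,\|\mathbf{1}_A\|_{\Phi^*}$, with $\Phi^*$ the complementary Young function and $\|\mathbf{1}_A\|_{\Phi^*}<\infty$ since $\mu(A)<\infty$, then forces $\|\tfrac{1}{n}\sum_{k=1}^n f\circ T^{n_k}\|_\Phi \ge c/\|\mathbf{1}_A\|_{\Phi^*} > 0$, contradicting the hypothesis.

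The main foreseeable difficulty lies entirely in the Poisson--Orlicz comparison $\|f\|_\Phi \asymp \mathbb{E}_\mathbb{P}|\tilde{I}(f)|$, which converts an infinite-measure quantity on $X$ into a probabilistic one on $\Omega$; it is this bridge that turns mixing of $T_*$ (a decay-of-correlations statement on a probability space) into the announced Orlicz-norm convergence on $X$. Once it is in hand, together with the classical zero-type/mixing correspondence for Poisson suspensions, the theorem is essentially a direct corollary of classical Blum--Hanson.
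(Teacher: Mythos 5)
Your proposal is correct and follows essentially the same route as the paper: pass to the Poisson suspension, use the equivalence of zero type with mixing of $T_*$, apply the classical Blum--Hanson theorem, and transfer back via the Poisson--Orlicz norm equivalence, with the converse obtained by pairing the averages against $1_{A}$. The only (cosmetic) differences are that the paper applies Blum--Hanson directly with $p=1$ to $I_{1}\left(f\right)\in L^{1}\left(\mu^{*}\right)$ for all $f\in L^{\Phi}\left(\mu\right)$ rather than via $L^{2}$ and density, and states the converse through Ces\`aro convergence of all subsequences rather than by explicit contraposition.
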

Thus $\left\Vert \cdot\right\Vert _{*}$ helps to get the ``correct
rate'' for the above convergence to characterize these two dynamical
invariants.

Then, mimicking the construction of the transfer operator $\widehat{T}$
on $L^{1}\left(\mu\right)$ obtained as the predual operator of $T$
acting on $L^{\infty}\left(\mu\right)$, we get an operator $\widehat{T_{\mathcal{P}}}$
acting on $L^{\Phi}\left(\mu\right)$ (Definition \ref{def:DualT_P})
and we show that not only does it preserve $L^{1}\left(\mu\right)$,
but it coincides with $\widehat{T}$ on it (Proposition \ref{prop:CoincidenceDual}).

We now need to recall these two other definitions (see \cite{KrenSuch69MixInf})
to explain our results involving the behavior of $\widehat{T}$ relatively
to $\left\Vert \cdot\right\Vert _{1}$ and $\left\Vert \cdot\right\Vert _{*}$.
\begin{defn*}
$\left(X,\mathcal{A},\mu,T\right)$ is said to be
\begin{itemize}
\item \emph{remotely infinite} if $\cap_{n\ge0}T^{-n}\mathcal{A}$ contains
only sets of zero or infinite $\mu$-measure.
\item \emph{exact} if $\cap_{n\ge0}T^{-n}\mathcal{A}=\left\{ \emptyset,X\right\} $
mod. $\mu$.
\end{itemize}
\end{defn*}
In particular, as $\mu\left(X\right)=+\infty$, we have the chain
of implications:
\[
\text{exact}\Longrightarrow\text{remotely infinite}\Longrightarrow\text{zero type}\Longrightarrow\text{absence of a.c. \ensuremath{T}-invariant measure}.
\]

Set $L_{0}^{1}\left(\mu\right):=\left\{ f\in L^{1}\left(\mu\right),\,\int_{X}fd\mu=0\right\} $.
Lin showed the following characterization of exactness:
\begin{thm*}
(\cite{Lin1971}) The dynamical system $\left(X,\mathcal{A},\mu,T\right)$
is exact if and only if for every $f\in L_{0}^{1}\left(\mu\right)$:
\[
\left\Vert \widehat{T^{n}}f\right\Vert _{1}\to0
\]

as $n$ tends to $+\infty$.
\end{thm*}
Then, adapting the rate with $\left\Vert \cdot\right\Vert _{*}$ we
get (Theorem \ref{thm:remotelyDualOp}):
\begin{thm*}
The dynamical system $\left(X,\mathcal{A},\mu,T\right)$ is remotely
infinite if and only if for every $f\in L^{\Phi}\left(\mu\right)$:
\[
\left\Vert \widehat{T_{\mathcal{P}}^{n}}f\right\Vert _{*}\to0
\]

or equivalently for every $f\in L_{0}^{1}\left(\mu\right)$ (or $f\in L^{1}\left(\mu\right)$)
\[
\left\Vert \widehat{T^{n}}f\right\Vert _{*}\to0
\]

as $n$ tends to $+\infty$.
\end{thm*}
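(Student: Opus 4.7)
The plan is to transfer the statement to the Poisson suspension $(X^{*},\mathcal{A}^{*},\mu^{*},T_{\mathcal{P}})$ and invoke Lin's characterization of exactness there. Three structural inputs from the earlier part of the paper will be used as black boxes: (i)~the compensated stochastic integral $\tilde{I}:L^{\Phi}(\mu)\to L^{1}_{0}(\mu^{*})$ is an embedding with $\|f\|_{\Phi}\asymp\|\tilde{I}(f)\|_{L^{1}(\mu^{*})}$, (ii)~it intertwines $\widehat{T_{\mathcal{P}}}$ on $L^{\Phi}(\mu)$ with $\widehat{T_{\mathcal{P}}}$ on $L^{1}(\mu^{*})$, and (iii)~$\widehat{T_{\mathcal{P}}}$ is a $\|\cdot\|_{\Phi}$-contraction on $L^{\Phi}(\mu)$ that coincides with $\widehat{T}$ on $L^{1}(\mu)$ by Proposition~\ref{prop:CoincidenceDual}.

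The first step is to identify $T$ being remotely infinite with $T_{\mathcal{P}}$ being exact. The tail $\sigma$-algebra $\bigcap_{n}T_{\mathcal{P}}^{-n}\mathcal{A}^{*}$ of the suspension is the Poisson $\sigma$-algebra generated by $\bigcap_{n}T^{-n}\mathcal{A}$; whenever the latter contains only sets of $\mu$-measure zero or $+\infty$, each counting variable is $0$ or $+\infty$ almost surely, so the former is trivial. Conversely, a tail set $A$ with $0<\mu(A)<\infty$ yields the non-constant tail-measurable variable $\tilde{I}(\mathbf{1}_{A})=N(A)-\mu(A)\in L^{1}_{0}(\mu^{*})$.

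For the forward direction, Lin's theorem applied to $T_{\mathcal{P}}$ gives $\|\widehat{T_{\mathcal{P}}^{n}}F\|_{L^{1}(\mu^{*})}\to 0$ for every $F\in L_{0}^{1}(\mu^{*})$; specializing $F=\tilde{I}(f)$ with $f\in L^{\Phi}(\mu)$ and invoking (i)--(ii) converts this into $\|\widehat{T_{\mathcal{P}}^{n}}f\|_{\Phi}\to 0$. For the converse, suppose $T$ is not remotely infinite and pick $A\in\bigcap_{n}T^{-n}\mathcal{A}$ with $0<\mu(A)<\infty$. Writing $A=T^{-n}A_{n}$ gives $\widehat{T^{n}}\mathbf{1}_{A}=\mathbf{1}_{A_{n}}$ with $\mu(A_{n})=\mu(A)$, so $\|\widehat{T^{n}}\mathbf{1}_{A}\|_{\Phi}$ is a positive constant independent of $n$. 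To produce a counterexample lying in $L^{1}_{0}(\mu)$, set $f:=\mathbf{1}_{A}-\frac{\mu(A)}{\mu(A')}\mathbf{1}_{A'}$ for $A'\in\mathcal{A}_{f}$ of very large $\mu$-measure: the $\|\cdot\|_{\Phi}$-contractivity of $\widehat{T^{n}}$ combined with the sub-linear growth of $\|\mathbf{1}_{A'}\|_{\Phi}$ in $\mu(A')$ renders the perturbation $\|\cdot\|_{\Phi}$-negligible, while the main term $\mathbf{1}_{A_{n}}$ retains its fixed positive norm.

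Finally, the equivalence among the three versions of the convergence statement is soft. Since $L^{1}(\mu)\hookrightarrow L^{\Phi}(\mu)$ continuously and $\widehat{T_{\mathcal{P}}^{n}}=\widehat{T^{n}}$ on $L^{1}(\mu)$ by (iii), the $L^{\Phi}$-version implies the $L^{1}$- and then the $L^{1}_{0}$-versions; conversely, the $\|\cdot\|_{\Phi}$-density of $L^{1}_{0}(\mu)$ in $L^{\Phi}(\mu)$ (truncate $f\in L^{\Phi}$ to $L^{1}$, then restore mean zero by a slowly-scaled indicator of a set of large measure at negligible $\|\cdot\|_{\Phi}$-cost) combined with the contraction property propagates the $L^{1}_{0}$-version upward by a standard three-epsilon argument. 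The genuinely new ingredients are (i)--(ii) at the $L^{\Phi}$-level and the identification of the Poisson tail in Step~1; once these are in hand, the theorem reduces cleanly to the classical Lin argument on a probability space.
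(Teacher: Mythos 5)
Your proposal is correct, and the forward direction follows the paper's route exactly: identify ``$T$ remotely infinite'' with exactness of the suspension (your Step~1 is a direct rederivation of Proposition~\ref{prop:exactPoissonRemoteBase}), apply Lin's theorem (Theorem~\ref{thm:lin}) on $\left(X^{*},\mathcal{A}^{*},\mu^{*},T_{*}\right)$, and pull back through the intertwining $\widehat{T_{*}}I_{1}\left(f\right)=I_{1}\left(\widehat{T}_{\mathcal{P}}f\right)$ of Proposition~\ref{prop:CoincidenceDual} together with the norm equivalence of Theorem~\ref{thm:Urbanik}; your remark that Lin's theorem already covers $I_{1}\left(f\right)$ for \emph{every} $f\in L^{\Phi}\left(\mu\right)$ even lets you skip the density step the paper uses there. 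Where you genuinely diverge is the converse. The paper stays on the Poisson space: it tests $\widehat{T_{*}^{n}}I_{1}\left(-1_{A}\right)$ against the tail-measurable function $1_{N\left(A_{n}\right)=0}$ and computes the explicit lower bound $\mu\left(A\right)e^{-\mu\left(A\right)}$, then reconciles this with the $L_{0}^{1}$ formulation by first upgrading the hypothesis to all of $L^{\Phi}\left(\mu\right)$ via density (Proposition~\ref{prop:densityL^1_0}) and contractivity. You instead work entirely on the base: from $A=T^{-n}A_{n}$ you get $\widehat{T^{n}}1_{A}=1_{A_{n}}$ with $\mu\left(A_{n}\right)=\mu\left(A\right)$, so $\left\Vert \widehat{T^{n}}1_{A}\right\Vert _{\Phi}$ is a fixed positive constant (the Orlicz norm of an indicator depends only on the measure of the set), and you restore zero mean by subtracting $\frac{\mu\left(A\right)}{\mu\left(A'\right)}1_{A'}$, whose $\Phi$-norm is $O\!\left(\mu\left(A'\right)^{-1/2}\right)$ and hence negligible under the contraction $\widehat{T}_{\mathcal{P}}$. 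This is more elementary --- it needs no computation on $X^{*}$ and produces the counterexample directly inside $L_{0}^{1}\left(\mu\right)$ --- at the cost of the small explicit estimate on $N_{\Phi}$ of scaled indicators; the paper's version buys a concrete quantitative obstruction ($\mu\left(A\right)e^{-\mu\left(A\right)}$) expressed in Poissonian terms. Both are valid, and your three-epsilon reconciliation of the $L_{0}^{1}$, $L^{1}$ and $L^{\Phi}$ formulations matches the paper's use of Proposition~\ref{prop:densityL^1_0}. One cosmetic caution: the paper reserves $T_{\mathcal{P}}$-style notation for the transfer operator $\widehat{T}_{\mathcal{P}}$ on $L^{\Phi}\left(\mu\right)$ and writes $T_{*}$ for the suspension map, so you should not use $T_{\mathcal{P}}$ for the suspension itself.
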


\section{\label{sec:Background-on-Poisson}Background on the Poisson space
and stochastic integrals}

\subsection{\label{subsec:Poisson-space}Poisson space}

Throughout this paper $\left(X,\mathcal{A},\mu\right)$ is a non-atomic
infinite Borel space, that is, measurably isomorphic to the real line
endowed with Borel sets (for the usual topology) and Lebesgue measure.

We recall a possible definition of the \emph{Poisson space} (also
called \emph{Poisson measure} or \emph{Poisson point process}) $\left(X^{*},\mathcal{A}^{*},\mu^{*}\right)$
over $\left(X,\mathcal{A},\mu\right)$:
\begin{itemize}
\item $X^{*}$ is the collection of measures of the form $\nu:=\sum_{i\in I}\delta_{x_{i}}$,
$x_{i}\in X$, $I$ countable.
\item $\mathcal{A}^{*}:=\sigma\left\{ N\left(A\right),\,A\in\mathcal{A}\right\} $
where $N\left(A\right)$ is the map $\omega\in X^{*}\mapsto\omega\left(A\right)$.
\item $\mu^{*}$ is the only probability measure such that, for any $k\in\mathbb{N}$,
any collection $A_{1},\dots,A_{k}$ of pairwise disjoint sets in $\mathcal{A}_{f}$,
the random variables $N\left(A_{1}\right),\dots,N\left(A_{k}\right)$
are independent and Poisson distributed with parameter $\mu\left(A_{1}\right),\dots,\mu\left(A_{k}\right)$
respectively.
\end{itemize}
It can be checked out that, once $\mathcal{A}^{*}$ is completed with
respect to $\mu^{*}$, up to a negligible set, the Poisson space becomes
a Lebesgue probability space (a standard Borel space with the completion
of the Borel $\sigma$-algebra). In other words, it is a ``nice''
probability space.

\subsection{\label{subsec:Poisson-suspension}Poisson suspension}

If $T$ is an endomorphism of $\left(X,\mathcal{A},\mu\right)$ then
the map $T_{*}$ defined by
\[
\omega\in X^{*}\mapsto\omega\circ T^{-1},
\]

is an endomorphism of $\left(X^{*},\mathcal{A}^{*},\mu^{*}\right)$.

The dynamical system $\left(X^{*},\mathcal{A}^{*},\mu^{*},T_{*}\right)$
is the \emph{Poisson suspension} over the \emph{base} $\left(X,\mathcal{A},\mu,T\right)$
and we can present the most basic ergodic results, obtained by Marchat
(\cite{March78Poiss}):
\begin{thm}
\label{thm:Ergodic basics}Let $T$ be an automorphism of $\left(X,\mathcal{A},\mu\right)$.
\begin{itemize}
\item $\left(X^{*},\mathcal{A}^{*},\mu^{*},T_{*}\right)$ is ergodic (and
weakly mixing) if and only if $\left(X,\mathcal{A},\mu,T\right)$
doesn't possess any $T$-invariant set of positive and finite $\mu$-measure,
or equivalently, if there is no absolutely continuous $T$-invariant
probability measure.
\item $\left(X^{*},\mathcal{A}^{*},\mu^{*},T_{*}\right)$ is mixing if and
only if $\left(X,\mathcal{A},\mu,T\right)$ is of zero type.
\end{itemize}
\end{thm}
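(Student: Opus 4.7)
The plan is to reduce both equivalences to the base dynamics via the Fock-space decomposition of $L^{2}(\mu^{*})$ (which the paper will recall as Theorem \ref{thm:Fock}): $L^{2}(\mu^{*})\cong\bigoplus_{n\ge 0}L^{2}_{\mathrm{sym}}(\mu^{\otimes n})$ with the Koopman operator of $T_{*}$ acting as $\bigoplus_{n\ge 0}T^{\otimes n}$. The constants sit in the $n=0$ summand, so $T_{*}$-invariance and asymptotic independence under $T_{*}^{n}$ translate into the analogous properties for each tensor power $T^{\otimes n}$.

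For the ergodicity equivalence, the easy direction is clear: a $T$-invariant set $A$ with $0<\mu(A)<\infty$ provides the nontrivial $T_{*}$-invariant function $N(A)\in L^{2}(\mu^{*})$. For the converse, first observe that the hypothesis is equivalent to the absence of any nonzero $T$-invariant function in $L^{2}(\mu)$ (level sets of such a function would give an invariant set of positive finite measure), and in fact to the absence of any point spectrum of $T$ on $L^{2}(\mu)$: an eigenvector $v$ for $\lambda\in\mathbb{T}$ satisfies $|v|\circ T=|v|$ with $|v|\in L^{2}(\mu)$, forcing $v=0$. The unitary spectral calculus then identifies the spectral measure of $T\otimes T$ at a simple tensor $v\otimes w$ as the multiplicative convolution $\sigma_{v}\ast\sigma_{w}$ on $\mathbb{T}$, which is atomless whenever $\sigma_{v}$ is; combined with density of simple tensors and the Cauchy--Schwarz bound $|\sigma_{f,g}(\{\lambda\})|^{2}\le\sigma_{f}(\{\lambda\})\sigma_{g}(\{\lambda\})$, this propagates atomlessness to every vector in $L^{2}(\mu^{\otimes 2})$. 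Induction yields the same for every $T^{\otimes n}$, so by the Fock decomposition only constants are $T_{*}$-invariant and $T_{*}$ is ergodic.

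The weak mixing conclusion then drops out by self-duplication: $(X^{*}\times X^{*},\mu^{*}\times\mu^{*},T_{*}\times T_{*})$ is canonically the Poisson suspension of $(X\sqcup X,\mu+\mu,T\sqcup T)$, whose base still has no invariant set of positive finite measure. Applying the ergodicity statement just established to this doubled system gives ergodicity of $T_{*}\times T_{*}$, hence weak mixing of $T_{*}$.

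For the mixing equivalence, the forward direction is immediate from first-chaos covariances: $\mathrm{Cov}_{\mu^{*}}(N(A),N(T_{*}^{-n}B))=\mu(A\cap T^{-n}B)$ for $A,B\in\mathcal{A}_{f}$, so mixing of $T_{*}$ forces this to vanish, which is exactly zero type of $T$. For the converse, decomposing $F\in L^{2}(\mu^{*})$ chaos-by-chaos gives $\mathrm{Cov}_{\mu^{*}}(F,F\circ T_{*}^{n})=\sum_{k\ge 1}\langle f_{k},(T^{\otimes k})^{n}f_{k}\rangle_{L^{2}(\mu^{\otimes k})}$; for product indicators $f_{k}=1_{A_{1}}\otimes\cdots\otimes 1_{A_{k}}$ the $k$-th term equals $\prod_{i}\mu(A_{i}\cap T^{-n}A_{i})$, which tends to $0$ by zero type, and $L^{2}$-density together with dominated convergence on the chaos series closes the argument. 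The main obstacle is the spectral propagation step in the ergodicity proof: carefully checking that absence of point spectrum transfers from $T$ to all its tensor powers requires the convolution-of-spectral-measures identity together with the density/Cauchy--Schwarz argument described above.
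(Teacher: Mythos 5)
The paper does not prove this theorem: it is quoted as a known result of Marchat, so there is no in-paper argument to compare against. Your proof is correct and follows what is essentially the standard route in the Poisson-suspension literature (and the one implicit in the paper's Section on the Fock space, Theorem \ref{thm:Fock}): identify $L^{2}\left(\mu^{*}\right)$ with $\bigoplus_{n\ge0}L^{2}\left(\mu\right)^{\odot n}$ on which the Koopman operator of $T_{*}$ acts as $\bigoplus_{n}U_{T}^{\otimes n}$, note that the hypothesis ``no invariant set of positive finite measure'' is exactly the statement that $U_{T}$ has no point spectrum on $L^{2}\left(\mu\right)$ (since any eigenvector $v$ yields the invariant function $\left|v\right|\in L^{2}\left(\mu\right)$, whose level sets have finite measure), and propagate continuity of the spectral type through tensor powers via $\sigma_{v\otimes w}=\sigma_{v}*\sigma_{w}$ together with the Cauchy--Schwarz bound on atoms of matrix spectral measures. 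This in fact gives weak mixing directly, since it shows $U_{T_{*}}$ restricted to the orthocomplement of the constants has continuous spectrum; your doubling argument via $\left(X\sqcup X\right)^{*}\simeq X^{*}\times X^{*}$ is a valid alternative but redundant. The mixing equivalence via chaos-by-chaos weak convergence of $U_{T}^{n}$ to $0$ and dominated convergence of the chaos series is also correct. The only clause you leave untouched is the parenthetical equivalence between ``no $T$-invariant set of positive finite $\mu$-measure'' and ``no absolutely continuous $T$-invariant probability measure''; this is the elementary observation that the density of such a probability is $T$-invariant and square-summable on its superlevel sets, and conversely that normalized $\mu$ restricted to an invariant set of finite positive measure is such a probability. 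It would be worth one sentence, but it is not a gap in the substance of the argument.
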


In particular, as we suppose $\mu$ infinite, $\left(X^{*},\mathcal{A}^{*},\mu^{*},T_{*}\right)$
is ergodic as soon as $\left(X,\mathcal{A},\mu,T\right)$ is.
\begin{rem}
Most references on Poisson suspensions deal with automorphisms (see
for example \cite{Roy07Infinite}) however Zweim\"uller (\cite{Zweimueller2008})
dealt with endomorphisms and proved that the natural extension of
the Poisson suspension over some system is the Poisson suspension
of the natural extension of this same system. The above theorem is
thus valid for endomorphisms as well.
\end{rem}

\subsection{\label{subsec:Stochastic-integrals}Stochastic integrals}

In the following, depending on the context, when dealing with the
integral of a function $g$ with respect to a measure $\rho$ on the
space $X$, we'll use the following equivalent notation:
\begin{itemize}
\item $\int_{X}g\left(x\right)\rho\left(dx\right)$
\item $\int_{X}gd\rho$
\item $\rho\left(g\right)$
\end{itemize}
However, we'll stick to the probabilistic notation $\mathbb{E}_{\mu^{*}}\left[F\right]$
for the expectation of a random variable $F$ on the probability space
$\left(X^{*},\mathcal{A}^{*},\mu^{*}\right)$.

We detail the notion of \emph{stochastic integral} in the Poisson
setting whose complete construction can be found in (\cite{Maruyama70IDproc}).

The starting point is the following formula readily available from
the definition: for any $A\in\mathcal{A}_{f}$
\[
\mathbb{E}_{\mu^{*}}\left[N\left(A\right)\right]=\mu\left(A\right),
\]

This allows to safely define such quantity as $N\left(f\right)$ on
$X^{*}$ for an $L^{1}\left(\mu\right)$-function $f$ on $\left(X,\mathcal{A}\right)$:

\[
N\left(f\right):\;\omega\mapsto\omega\left(f\right),
\]

at least for $\mu^{*}$-a.e. $\omega\in X^{*}$.

Indeed, if $f\in L^{1}\left(\mu\right)$, using standard monotone
approximation arguments, $N\left(\left|f\right|\right)$ is easily
proven to be in $L^{1}\left(\mu^{*}\right)$ and satisfies:
\[
\mathbb{E}_{\mu^{*}}\left[N\left(\left|f\right|\right)\right]=\mu\left(\left|f\right|\right).
\]

Therefore, for any $f\in L^{1}\left(\mu\right)$, $N\left(\left|f\right|\right)$
is finite $\mu^{*}$-a.e. and thus $f$ is $\omega$-integrable for
$\mu^{*}$-a.e. $\omega\in X^{*}$ and we have:
\begin{equation}
\mathbb{E}_{\mu^{*}}\left[N\left(f\right)\right]=\mu\left(f\right).\label{eq:Integralebis-1}
\end{equation}

Observe that replacing $f$ by some function $\widetilde{f}$ such
that $f=\widetilde{f}$ $\mu$-a.e. yields two random variables $N\left(f\right)$
and $N\left(\widetilde{f}\right)$ that are equal $\mu^{*}$-a.e..

We define, for any $f\in L^{1}\left(\mu\right)$
\begin{equation}
I_{1}\left(f\right):=N\left(f\right)-\mu\left(f\right).\label{eq:I_1}
\end{equation}

It is a centered and integrable random variable.

It is however possible to get further. Set $\chi\left(x\right)=x^{2}\wedge1$,
$x\ge0$ and consider the \emph{generalized Orlicz space} (see \cite{Rao1991},
page 400):
\[
L^{\chi}\left(\mu\right):=\left\{ f\in L^{0}\left(\mu\right),\:\int_{X}\chi\left(\left|f\right|\right)d\mu<+\infty\right\} .
\]

Then the \emph{stochastic integral} $I\left(f\right)$, for $f\in L^{\chi}\left(\mu\right)$,
is defined as

\begin{align*}
I\left(f\right) & :=\lim_{\epsilon\to0}N\left(f1_{\left|f\right|>\epsilon}\right)-\mu\left(f1_{\left|f\right|\le1}1_{\left|f\right|>\epsilon}\right)\\
 & =\lim_{\epsilon\to0}I_{1}\left(f1_{\epsilon<\left|f\right|\le\frac{1}{\epsilon}}\right)+\mu\left(f1_{1<\left|f\right|\le\frac{1}{\epsilon}}\right),
\end{align*}

the limit taking place in $\mu^{*}$-measure.
\begin{rem}
\label{rem:Almostlinear}Pay attention to the fact that $I$ is not
linear, nevertheless we have:
\[
I\left(\alpha f+g\right)=\alpha I\left(f\right)+I\left(g\right)+c
\]

for some constant $c$ depending on $\alpha$, $f$ and $g$.

In particular, we have
\[
I\left(f\right)=I\left(\mathfrak{Re}f\right)+iI\left(\mathfrak{Im}f\right)+d
\]

for some constant $d$ depending on $f$.

However, the effect of an endomorphism doesn't involve the addition
of a constant, thanks to the fact it preserves the measure:

\begin{align*}
I\left(f\right)\circ T_{*} & =\lim_{\epsilon\to0}N\left(f1_{\left|f\right|>\epsilon}\right)\circ T_{*}-\mu\left(f1_{\left|f\right|\le1}1_{\left|f\right|>\epsilon}\right)\\
 & =\lim_{\epsilon\to0}N\left(f\circ T1_{\left|f\circ T\right|>\epsilon}\right)-\mu\left(f1_{\left|f\right|\le1}1_{\left|f\right|>\epsilon}\right)\\
 & =\lim_{\epsilon\to0}N\left(f\circ T1_{\left|f\circ T\right|>\epsilon}\right)-\mu\left(f\circ T1_{\left|f\circ T\right|\le1}1_{\left|f\circ T\right|>\epsilon}\right)\\
 & =I\left(f\circ T\right)
\end{align*}

Lastly, $I\left(f\right)$ is constant if and only if $f$ vanishes
$\mu$-a.e..
\end{rem}

\begin{defn}
A \emph{stochastic integral} defined on $\left(X^{*},\mathcal{A}^{*},\mu^{*}\right)$
is a random variable of the form
\[
I\left(f\right)+c,
\]

with $f$ in $L^{\chi}\left(\mu\right)$ and $c\in\mathbb{\mathbb{C}}$.
\end{defn}

In particular, when $f\in L^{1}\left(\mu\right)$, $I_{1}\left(f\right)$
is a stochastic integral since $I_{1}\left(f\right)=I\left(f\right)-\mu\left(f1_{\left|f\right|>1}\right)$.
\begin{rem}
\label{rem:ID Random variable}It is worth mentioning that a stochastic
integral $I\left(f\right)+c$ is always an \emph{infinitely divisible}
random variable (on $\mathbb{C}\simeq\mathbb{R}^{2}$) variable without
Gaussian part (see \cite{Maruyama70IDproc} together with Theorem
8.1, page 37 in \cite{Sato99LevPro}) whose L\'evy measure is the image
measure of $\mu$ by $f$, restricted to $\mathbb{C}\setminus\left\{ 0\right\} $.
Up to the addition of a constant, the L\'evy measure completely characterizes
the distribution of a stochastic integral.
\end{rem}

\subsubsection{Campbell measure and difference operators}

To derive useful properties of stochastic integrals, it is convenient
to consider a representation of them in the product space $\left(X\times X^{*},\mathcal{A}\otimes\mathcal{A}^{*},\mu\otimes\mu^{*}\right)$
as it is done in \cite{LastPen11Fock} for the $L^{2}$-case. We need
however to go beyond $L^{2}$ and we have been unable to find all
the results we want in the existing literature, we thereafter give
proofs of what is, as far as we know, new material.
\begin{defn}
(see for example \cite{Daley2008}, page 269) The \emph{Campbell measure}
$Q$ of $\left(X^{*},\mathcal{A}^{*},\mu^{*}\right)$ is defined on
$\left(X\times X^{*},\mathcal{A}\otimes\mathcal{A}^{*}\right)$, for
any positive measurable $\varphi$ by
\[
\int_{X\times X^{*}}\varphi\left(x,\omega\right)Q\left(d\left(x,\omega\right)\right):=\int_{X^{*}}\left(\int_{X}\varphi\left(x,\omega\right)\omega\left(dx\right)\right)\mu^{*}\left(d\omega\right).
\]
\end{defn}

The important formula is the following:
\begin{thm}
(Mecke Formula, \cite{Meck67Form}) For any positive measurable function
$\varphi$ on $\left(X\times X^{*},\mathcal{A}\otimes\mathcal{A}^{*}\right)$:
\[
\int_{X\times X^{*}}\varphi\left(x,\omega\right)Q\left(d\left(x,\omega\right)\right)=\int_{X\times X^{*}}\varphi\left(x,\omega+\delta_{x}\right)\mu\otimes\mu^{*}\left(d\left(x,\omega\right)\right).
\]
\end{thm}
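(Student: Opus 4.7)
The plan is to use a standard monotone-class / $\pi$-system argument to reduce to a simple class of test functions $\varphi$, and then do the explicit computation using the defining independence and Poisson-distribution properties of $N$. By linearity and monotone convergence, it suffices to establish the identity when $\varphi(x,\omega)=1_{A}(x)\,F(\omega)$, where $A\in\mathcal{A}_{f}$ and $F$ is a bounded measurable function of $(N(A_{1}),\ldots,N(A_{k}))$ for pairwise disjoint sets $A_{1},\ldots,A_{k}\in\mathcal{A}_{f}$; indeed, cylinder sets of this form generate $\mathcal{A}^{*}$ and are stable under finite intersection, and both sides of the asserted equality define $\sigma$-finite measures on $\mathcal{A}\otimes\mathcal{A}^{*}$ that agree on such $\varphi$ by a monotone-class argument.

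Next, I would refine the choice of $A$: since the atoms of $\sigma(A_{1},\ldots,A_{k})$ partition $X$ into the sets $A_{1},\ldots,A_{k}$ and the complement $B_{0}:=X\setminus\bigcup_{j=1}^{k}A_{j}$, I can split $A$ accordingly and assume by linearity that either (i) $A\subseteq B_{0}$, or (ii) $A\subseteq A_{j}$ for some $j$. In case (i), the indicator $1_{B_{0}}(x)$ guarantees that $\omega+\delta_{x}$ has the same values of $N(A_{1}),\ldots,N(A_{k})$ as $\omega$, so $F(\omega+\delta_{x})=F(\omega)$; the right-hand side becomes $\mu(A)\,\mathbb{E}_{\mu^{*}}[F]$, while independence of $N(A)$ from $F$ plus $\mathbb{E}_{\mu^{*}}[N(A)]=\mu(A)$ gives the same value on the left.

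In case (ii), for $x\in A\subseteq A_{j}$ the measure $\omega+\delta_{x}$ has $N(A_{j})(\omega)+1$ in the $j$-th coordinate and is unchanged elsewhere; writing $F(\omega)=g(N(A_{1}),\ldots,N(A_{k}))$, the right-hand side equals
\[
\mu(A)\,\mathbb{E}_{\mu^{*}}\bigl[g(N(A_{1}),\ldots,N(A_{j})+1,\ldots,N(A_{k}))\bigr].
\]
On the left, since $N(A)$ depends on $N(A_{j})$ only through the fact that $N(A)\leq N(A_{j})$ but is independent of the other $N(A_{i})$, I split $A_{j}=A\sqcup(A_{j}\setminus A)$ so that $N(A)$ and $N(A_{j}\setminus A)$ are independent Poisson variables of parameters $\mu(A)$ and $\mu(A_{j})-\mu(A)$. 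The computation then hinges on the elementary identity $\mathbb{E}[Z\,h(Z)]=\lambda\,\mathbb{E}[h(Z+1)]$ for $Z\sim\mathrm{Poisson}(\lambda)$, obtained by the index shift $k\mapsto k-1$ in $\sum_{k\geq 1}k\,h(k)\,\lambda^{k}e^{-\lambda}/k!$. Applying this to $Z=N(A)$ with $h$ the conditional expectation of $g$ given the other Poisson variables yields exactly the right-hand side.

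The main obstacle, to my mind, is purely bookkeeping rather than conceptual: keeping track of the decomposition of $A$ relative to the generating partition and making sure the monotone-class step covers every positive measurable $\varphi$ despite the fact that $\mu$ (and hence the measures on $X\times X^{*}$) is only $\sigma$-finite, not finite. Once the Poisson identity $\mathbb{E}[Z\,h(Z)]=\lambda\,\mathbb{E}[h(Z+1)]$ is invoked in the right place, the rest is essentially automatic.
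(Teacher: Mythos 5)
Your proof is correct. Note, however, that the paper itself offers no proof of this statement: it is quoted as a known result with a citation to Mecke's 1967 paper, so there is nothing internal to compare against. Your argument is the standard self-contained derivation: reduce by a monotone-class/$\pi$-system argument to test functions of the form $1_{A}(x)\,g\bigl(N(A_{1}),\dots,N(A_{k})\bigr)$ with the $A_{i}$ pairwise disjoint and of finite measure, split $A$ along the partition generated by $A_{1},\dots,A_{k}$ and their complement, and in the nontrivial case invoke the one-dimensional Mecke identity $\mathbb{E}\left[Z\,h(Z)\right]=\lambda\,\mathbb{E}\left[h(Z+1)\right]$ for $Z\sim\mathrm{Poisson}(\lambda)$, together with the independence of $N(A)$ and $N(A_{j}\setminus A)$ from the remaining coordinates. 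The two points you flag as bookkeeping are indeed the only delicate ones and you handle both: the generating class $\{A\times C\}$ is a $\pi$-system (after refining the defining families of two cylinder sets to a common disjoint family), and both sides restrict to finite measures on $A\times X^{*}$ for $A\in\mathcal{A}_{f}$, which gives the $\sigma$-finiteness needed for the uniqueness-of-extension step. This is essentially the proof one finds in the point-process literature, and it is sound.
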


This formula simply says that $Q$ is the image measure of $\mu\otimes\mu^{*}$
under the mapping:
\[
\left(x,\omega\right)\mapsto\left(x,\omega+\delta_{x}\right).
\]

\begin{rem}
\label{rem:abscont}It is worth noting that, when $\mu$ is infinite,
the projection of $Q$ along the second coordinate is equivalent to
$\mu^{*}$ (of course, the same applies to $\mu\otimes\mu^{*}$).
To see that, take $f>0$ in $L^{1}\left(\mu\right)$, then, as $\mu$
is an infinite measure, for $\mu^{*}$-almost all $\omega\in X^{*}$,
$\omega\neq0$ and thus $\int_{X}f\left(x\right)\omega\left(dx\right)>0$
for $\mu^{*}$-almost all $\omega\in X^{*}$. We now consider the
measure $\widetilde{Q}$ on $\left(X\times X^{*},\mathcal{A}\otimes\mathcal{A}^{*}\right)$
defined by $\frac{d\widetilde{Q}}{dQ}\left(x,\omega\right):=\frac{f\left(x\right)}{\int_{X}f\left(s\right)\omega\left(ds\right)}$
so that $\widetilde{Q}\sim Q$. Now consider its projection $\overline{Q}$
on $X^{*}$:

\begin{align*}
\overline{Q}\left(\mathfrak{A}\right) & =\int_{X\times X^{*}}1_{\mathfrak{A}}\left(\omega\right)\frac{f\left(x\right)}{\int_{X}f\left(s\right)\omega\left(ds\right)}Q\left(d\left(x,\omega\right)\right)\\
 & =\int_{X^{*}}1_{\mathfrak{A}}\left(\omega\right)\frac{1}{\int_{X}f\left(s\right)\omega\left(ds\right)}\left(\int_{X}f\left(t\right)\omega\left(dt\right)\right)\mu^{*}\left(d\omega\right)\\
 & =\mu^{*}\left(\mathfrak{A}\right).
\end{align*}

Thus $\overline{Q}=\mu^{*}$.
\end{rem}

For any measurable $F$ on $\left(X,\mathcal{A}^{*}\right)$, define
$F_{\delta}$ on $\left(X\times X^{*},\mathcal{A}\otimes\mathcal{A}^{*}\right)$
by
\[
\left(x,\omega\right)\mapsto F\left(\omega+\delta_{x}\right)
\]

\begin{defn}
(See \cite{LastPen11Fock}) The \emph{difference operator} $D$ associates
a measurable $F$ on $\left(X^{*},\mathcal{A}^{*}\right)$ to a measurable
$DF$ on $\left(X\times X^{*},\mathcal{A}\otimes\mathcal{A}^{*}\right)$
by
\[
\left(x,\omega\right)\mapsto\left(D_{x}F\right)\left(\omega\right):=F_{\delta}\left(x,\omega\right)-F\left(\omega\right).
\]
\end{defn}

It has to be noted that, taking $F=\widetilde{F}$ $\mu^{*}$-a.e.
implies that $DF=D\widetilde{F}$ $\mu\otimes\mu^{*}$-a.e., indeed,
consider
\[
\left(D_{x}F\right)\left(\omega\right)-\left(D_{x}\widetilde{F}\right)\left(\omega\right)=\left(F-\widetilde{F}\right)_{\delta}\left(x,\omega\right)-\left(F-\widetilde{F}\right)\left(\omega\right)
\]

But $F=\widetilde{F}$ $\mu\otimes\mu^{*}$-a.e. and, thanks to Remark
\ref{rem:abscont}, $\left(F-\widetilde{F}\right)_{\delta}$ is distributed,
under $\widetilde{Q}\sim\mu\otimes\mu^{*}$, as $F-\widetilde{F}$
under $\mu^{*}$ and therefore vanishes $\mu\otimes\mu^{*}$-a.e. 

This proves $DF=D\widetilde{F}$ $\mu\otimes\mu^{*}$-a.e.. and, in
particular $DF$ can be safely defined on $F$ that are only defined
$\mu^{*}$-a.e.

Those operators are well suited for stochastic integrals as we get:
\begin{prop}
\label{prop:diffInt}Let $f$ be in $L^{\chi}\left(\mu\right)$. For
$\mu\otimes\mu^{*}$-a.e. $\left(x,\omega\right)\in X\times X^{*}$:
\[
D_{x}I\left(f\right)\left(\omega\right)=f\left(x\right)
\]
\end{prop}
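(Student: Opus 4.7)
The plan is to verify the identity first for the bounded truncations entering the definition of $I(f)$, and then extend to general $f \in L^\chi(\mu)$ by a limiting argument driven by the Mecke formula, which lets us transfer convergence in $\mu^*$-measure for a random variable $G$ on $(X^*,\mathcal{A}^*,\mu^*)$ into convergence in $\mu \otimes \mu^*$-measure (on sets of finite $\mu$-measure) for its $\delta_x$-shifted version $G(\omega + \delta_x)$.

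For each $\epsilon > 0$ I set $f_\epsilon := f\, 1_{\epsilon < |f| \le 1/\epsilon}$. Since $\int_X \chi(|f|)\,d\mu < \infty$, Markov's inequality gives $\mu(|f| > \epsilon) < \infty$, so $f_\epsilon$ is bounded on a finite-measure set and therefore lies in $L^1(\mu)$. The random variable
\[
J^\epsilon(f) := N(f_\epsilon) - \mu\bigl(f_\epsilon 1_{|f| \le 1}\bigr) = I_1(f_\epsilon) + \mu\bigl(f 1_{1 < |f| \le 1/\epsilon}\bigr)
\]
converges to $I(f)$ in $\mu^*$-measure by the very definition of the stochastic integral recalled above. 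On the other hand $J^\epsilon(f)$ differs from $N(f_\epsilon)$ only by a deterministic constant, which $D$ annihilates, so a direct computation yields $D_x J^\epsilon(f)(\omega) = (\omega + \delta_x)(f_\epsilon) - \omega(f_\epsilon) = f_\epsilon(x)$ for $\mu \otimes \mu^*$-a.e.\ $(x,\omega)$. Setting $G_\epsilon := I(f) - J^\epsilon(f)$, the target identity will follow from
\[
D_x I(f)(\omega) - f_\epsilon(x) = G_\epsilon(\omega + \delta_x) - G_\epsilon(\omega)
\]
once both terms on the right are shown to tend to $0$.

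The second term depends only on $\omega$ and tends to $0$ in $\mu^*$-measure by construction, hence also in $\mu \otimes \mu^*$-measure after restriction to $A \times X^*$ for any $A \in \mathcal{A}_f$. For the first term, the Mecke formula is exactly the tool needed: for $A \in \mathcal{A}_f$ and $\eta > 0$,
\[
\mu \otimes \mu^*\bigl(\{(x,\omega) \in A \times X^* : |G_\epsilon(\omega + \delta_x)| > \eta\}\bigr) = \int_{X^*} 1_{|G_\epsilon| > \eta}\, \omega(A)\, \mu^*(d\omega) = \mathbb{E}_{\mu^*}\bigl[N(A) 1_{|G_\epsilon| > \eta}\bigr].
\]
Since $N(A)$ is Poisson with finite parameter $\mu(A)$, it lies in $L^2(\mu^*)$, and Cauchy--Schwarz bounds the right-hand side by $\|N(A)\|_{L^2(\mu^*)} \mu^*(|G_\epsilon| > \eta)^{1/2}$, which tends to $0$.

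Extracting a subsequence $\epsilon_k \to 0$ along which the convergence holds $\mu \otimes \mu^*$-a.e.\ on $A \times X^*$, and combining with the pointwise convergence $f_{\epsilon_k}(x) \to f(x)$ on $X$, I conclude $D_x I(f)(\omega) = f(x)$ for $\mu \otimes \mu^*$-a.e.\ $(x,\omega) \in A \times X^*$. Exhausting $X$ by a countable union of sets from $\mathcal{A}_f$ yields the identity on all of $X \times X^*$. The main obstacle is precisely the transfer from $\mu^*$-measure convergence of $G_\epsilon$ to joint-measure convergence of $(x,\omega) \mapsto G_\epsilon(\omega + \delta_x)$, and the Mecke formula paired with the square-integrability of $N(A)$ on finite-measure sets is exactly what makes this transfer work.
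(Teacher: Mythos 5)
Your proof is correct and follows essentially the same route as the paper's: truncate $f$ to an $L^1$ function, compute the difference operator explicitly on the truncation, and then pass to the limit by transferring $\mu^*$-convergence of the stochastic integrals to convergence of their $\delta_x$-shifted versions under $\mu\otimes\mu^*$. The only (cosmetic) difference is in that last transfer step: you quantify it via the Mecke formula restricted to $A\times X^*$ for $A\in\mathcal{A}_f$ together with Cauchy--Schwarz against $N(A)\in L^2(\mu^*)$, whereas the paper extracts an a.e.-convergent subsequence first and transfers the null set directly through the equivalent measure $\widetilde{Q}\sim\mu\otimes\mu^*$ of Remark \ref{rem:abscont}.
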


\begin{proof}
Set $f_{n}:=f1_{\left|f\right|>\frac{1}{n}}$, $n\ge1$, we have $\mu\left\{ \left|f\right|>\frac{1}{n}\right\} <\infty$,
hence, for $\mu^{*}$-a.e. $\omega$, $\omega\left\{ \left|f\right|>\frac{1}{n}\right\} <\infty$
and we can safely write, for $\mu\otimes\mu^{*}$-a.e. $\left(\omega,x\right)$:
\begin{align*}
\left(I\left(f_{n}\right)\right)_{\delta}\left(\omega,x\right) & =\int_{\left|f\right|>\frac{1}{n}}fd\left(\omega+\delta_{x}\right)-\int_{\left|f\right|>\frac{1}{n}}f1_{\left|f\right|\le1}d\mu\\
 & =\int_{\left|f\right|>\frac{1}{n}}fd\left(\omega\right)+f\left(x\right)1_{\left|f\right|>\frac{1}{n}}\left(x\right)-\int_{\left|f\right|>\frac{1}{n}}f1_{\left|f\right|\le1}d\mu\\
 & =I\left(f_{n}\right)\left(\omega\right)+f_{n}\left(x\right)
\end{align*}

By definition $I\left(f_{n}\right)$ tends to $I\left(f\right)$ in
$\mu^{*}$-measure. In particular, there exists an increasing sequence
$\left\{ n_{k}\right\} _{k\in\mathbb{N}}$ such that $I\left(f_{n_{k}}\right)$
tends to $I\left(f\right)$ $\mu^{*}$-a.e. as $k$ tends to $+\infty$.

But this implies, thanks to Remark \ref{rem:abscont} once again,
that $\left(I\left(f_{n_{k}}\right)\right)_{\delta}$ tends to $\left(I\left(f\right)\right)_{\delta}$
$\mu\otimes\mu^{*}$-a.e. as $k$ tends to $+\infty$.

Therefore
\[
\left(I\left(f\right)\right)_{\delta}\left(\omega,x\right)=I\left(f\right)\left(\omega\right)+f\left(x\right),
\]

hence the result.

Our next goal is to show the converse, to achieve this we need the
following intermediate results.
\end{proof}
\begin{prop}
\label{prop:LastPen}(See \cite{LastPen11Fock}) If $F\in L^{2}\left(\mu^{*}\right)$
then $x\mapsto\mathbb{E}_{\mu^{*}}\left[D_{x}F\right]$ belongs to
$L^{2}\left(\mu\right)$ and, if there exists a measurable $f$ on
$\left(X,\mathcal{A}\right)$ such that $D_{x}F\left(\omega\right)=f\left(x\right)$
for $\mu\otimes\mu^{*}$-a.e. $\left(x,\omega\right)\in X\times X^{*}$
then there exists $c\in\mathbb{C}$ such that
\[
F=I\left(f\right)+c.
\]
\end{prop}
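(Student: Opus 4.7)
The plan is to invoke the Fock-space (chaos) decomposition of $L^2(\mu^*)$ recalled in Theorem \ref{thm:Fock}: every $F\in L^2(\mu^*)$ admits a unique representation $F=\sum_{n\ge 0}I_n(f_n)$ with symmetric kernels $f_n\in L^2(\mu^{\otimes n})$, where $I_0(f_0)=f_0\in\mathbb{C}$ and $I_1(f_1)=I(f_1)$ for $f_1\in L^2(\mu)$, together with the Pythagorean identity $\|F\|_{L^2(\mu^*)}^2=\sum_{n\ge 0}n!\,\|f_n\|_{L^2(\mu^{\otimes n})}^2$.

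The key analytic fact to establish is the commutation relation
\[
D_{x}I_{n}(f_{n})(\omega)=n\,I_{n-1}\bigl(f_{n}(x,\cdot)\bigr)(\omega),
\]
valid for $\mu\otimes\mu^{*}$-a.e.\ $(x,\omega)$. I would first verify it on symmetric tensor products of indicators of pairwise disjoint sets of finite measure (where $I_{n}(f_{n})$ reduces to a concrete polynomial in compensated Poisson increments and $D_{x}$ acts combinatorially by adding a single point), and then extend by density, using the Mecke formula together with Remark \ref{rem:abscont} to convert $L^{2}(\mu^{*})$-convergence of a sequence $F_{k}\to F$ into $L^{2}(\mu\otimes\mu^{*})$-convergence of $DF_{k}\to DF$. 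Orthogonality of the chaoses then gives $\mathbb{E}_{\mu^{*}}[D_{x}F]=f_{1}(x)$ for $\mu$-a.e.\ $x$, and the Pythagorean identity yields $\|f_{1}\|_{L^{2}(\mu)}\le\|F\|_{L^{2}(\mu^{*})}$, which proves the first assertion.

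For the second assertion, suppose $D_{x}F(\omega)=f(x)$ for $\mu\otimes\mu^{*}$-a.e.\ $(x,\omega)$. Then, by what precedes, $f(x)=\mathbb{E}_{\mu^{*}}[D_{x}F]=f_{1}(x)$, so $f\in L^{2}(\mu)$ and $I(f)\in L^{2}(\mu^{*})$. Set $G:=F-I(f)\in L^{2}(\mu^{*})$; by Proposition \ref{prop:diffInt}, $D_{x}I(f)(\omega)=f(x)$, hence $D_{x}G=0$ for $\mu\otimes\mu^{*}$-a.e.\ $(x,\omega)$. Writing the chaos expansion $G=\sum_{n\ge 0}I_{n}(g_{n})$ and applying the commutation relation, the identity $DG\equiv 0$ forces $I_{n-1}(g_{n}(x,\cdot))=0$ for every $n\ge 1$ and $\mu$-a.e.\ $x$; the $L^{2}$-isometry of $I_{n-1}$ then gives $g_{n}\equiv 0$ for all $n\ge 1$, so that $G=g_{0}=:c$ is a constant, whence $F=I(f)+c$.

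The main obstacle is making the commutation formula $D\circ I_{n}=n\,I_{n-1}$ fully rigorous: $I_{n}$ is only defined as an $L^{2}$-limit and $D$ is not a priori continuous, so one needs a genuine $L^{2}(\mu\otimes\mu^{*})$-estimate of the form $\int_{X}\mathbb{E}_{\mu^{*}}[(D_{x}F)^{2}]\,\mu(dx)=\sum_{n\ge 1}n\cdot n!\,\|f_{n}\|^{2}$ (a discrete Poincar\'e-type identity) to close the approximation argument. The Mecke formula, combined with the disjoint-support product structure on which both sides are transparently equal, is precisely what delivers this estimate.
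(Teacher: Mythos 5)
First, note that the paper itself offers no proof of this proposition: it is imported verbatim from \cite{LastPen11Fock}, so there is no internal argument to compare yours with, and your sketch follows the route of that reference. The genuine problem is that the obstacle you correctly identify at the end is not removed by the tool you propose. The difference operator $D$ does \emph{not} map $L^{2}\left(\mu^{*}\right)$ into $L^{2}\left(\mu\otimes\mu^{*}\right)$: in the identity $\int_{X}\mathbb{E}_{\mu^{*}}\left[\left|D_{x}F\right|^{2}\right]\mu\left(dx\right)=\sum_{n\ge1}n\cdot n!\left\Vert f_{n}\right\Vert ^{2}$ the right-hand side is $+\infty$ for many $F\in L^{2}\left(\mu^{*}\right)$ (e.g. $n!\left\Vert f_{n}\right\Vert ^{2}=n^{-3/2}$), so the natural domain of $D$ as an operator into $L^{2}\left(\mu\otimes\mu^{*}\right)$ is a proper, dense subspace. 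Consequently the announced conversion of $L^{2}\left(\mu^{*}\right)$-convergence $F_{k}\to F$ into $L^{2}\left(\mu\otimes\mu^{*}\right)$-convergence $DF_{k}\to DF$ is false in general; Remark \ref{rem:abscont} only yields almost everywhere convergence along a subsequence, which does not let you interchange $D_{x}$, the limit and $\mathbb{E}_{\mu^{*}}$. Since the first assertion concerns \emph{every} $F\in L^{2}\left(\mu^{*}\right)$, your argument proves it only on $\mathrm{dom}\,D$. You also never check that $\mathbb{E}_{\mu^{*}}\left[D_{x}F\right]$ is even defined, i.e. that $\omega\mapsto F\left(\omega+\delta_{x}\right)$ is $\mu^{*}$-integrable for $\mu$-a.e. $x$.

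The statement is nonetheless true for all $F$, and the repair is to bypass any bound on the full gradient. Applying the Mecke formula to $\left(x,\omega\right)\mapsto1_{B}\left(x\right)\left|F\left(\omega\right)\right|$ gives $\int_{B}\mathbb{E}_{\mu^{*}}\left[\left|F\left(\cdot+\delta_{x}\right)\right|\right]\mu\left(dx\right)=\mathbb{E}_{\mu^{*}}\left[\left|F\right|N\left(B\right)\right]\le\left\Vert F\right\Vert _{L^{2}\left(\mu^{*}\right)}\left\Vert N\left(B\right)\right\Vert _{L^{2}\left(\mu^{*}\right)}<\infty$ for $B\in\mathcal{A}_{f}$, whence, by $\sigma$-finiteness, $D_{x}F\in L^{1}\left(\mu^{*}\right)$ for $\mu$-a.e. $x$; the same formula applied to $F$ itself yields $\int_{B}\mathbb{E}_{\mu^{*}}\left[D_{x}F\right]\mu\left(dx\right)=\mathbb{E}_{\mu^{*}}\left[F\left(N\left(B\right)-\mu\left(B\right)\right)\right]=\int_{B}f_{1}d\mu$ for every $B\in\mathcal{A}_{f}$, where $I_{1}\left(f_{1}\right)=PF$ is the projection of $F$ onto the first chaos. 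Hence $\mathbb{E}_{\mu^{*}}\left[D_{\cdot}F\right]=f_{1}\in L^{2}\left(\mu\right)$ with no restriction on $F$. Under the hypothesis of the second assertion this identifies $f=f_{1}\in L^{2}\left(\mu\right)$, and it remains to show that $G:=F-I\left(f\right)$, which satisfies $DG=0$ a.e., is constant; this too can be done without the general commutation relation, for instance by using Mecke to get $k\,\mathbb{E}_{\mu^{*}}\left[G1_{N\left(B\right)=k}\right]=\mu\left(B\right)\mathbb{E}_{\mu^{*}}\left[G1_{N\left(B\right)=k-1}\right]$ and deducing $\mathbb{E}_{\mu^{*}}\left[G\mid N\left(B_{1}\right),\dots,N\left(B_{m}\right)\right]=\mathbb{E}_{\mu^{*}}\left[G\right]$ for all finite disjoint families, so that $G$ is $\mu^{*}$-a.s. constant. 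Your chaos-by-chaos conclusion $g_{n}\equiv0$ rests once more on the commutation relation for an arbitrary square-integrable $G$, so as written the second half inherits the same gap.
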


And:
\begin{lem}
\label{lem:DF=00003D0}Let $F$ be a measurable on $\left(X^{*},\mathcal{A}^{*},\right)$
such that, $\mu\otimes\mu^{*}$-a.e.,
\[
DF=0,
\]

then $F$ is constant $\mu^{*}$-a.e..
\end{lem}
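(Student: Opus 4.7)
The strategy I would take is to reduce the claim to the $L^{2}(\mu^{*})$-case so that Proposition \ref{prop:LastPen} applies directly with the zero function. Although the hypothesis assumes nothing about the integrability of $F$, the crucial observation is that $DF=0$ survives under composition with any measurable map $\phi:\mathbb{C}\to\mathbb{C}$: since $F(\omega+\delta_{x})=F(\omega)$ for $\mu\otimes\mu^{*}$-a.e.\ $(x,\omega)$, one has
\[
D_{x}(\phi\circ F)(\omega)=\phi\bigl(F(\omega+\delta_{x})\bigr)-\phi\bigl(F(\omega)\bigr)=0
\]
for $\mu\otimes\mu^{*}$-a.e.\ $(x,\omega)$ as well.

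The plan is then to pick a \emph{bounded injective} measurable map $\phi$, for instance $\phi(z):=z/(1+|z|)$, so that $G:=\phi\circ F$ lies in $L^{\infty}(\mu^{*})\subset L^{2}(\mu^{*})$ and still satisfies $DG=0$ $\mu\otimes\mu^{*}$-a.e. Applying Proposition \ref{prop:LastPen} to $G$ with the measurable function $f\equiv 0$ (which lies in $L^{2}(\mu)$ and coincides with $x\mapsto\mathbb{E}_{\mu^{*}}[D_{x}G]$) yields a constant $c\in\mathbb{C}$ such that
\[
G=I(0)+c=c\qquad\mu^{*}\text{-a.e.,}
\]
where $I(0)\equiv 0$ by the very definition of the stochastic integral. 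The injectivity of $\phi$ then forces $F=\phi^{-1}(c)$ $\mu^{*}$-a.e., which is the desired conclusion.

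I do not expect a serious obstacle here: the only point needing care is that $F$ is only defined $\mu^{*}$-a.e., so one must make sure that both $DF=0$ and $D(\phi\circ F)=0$ are unambiguous modulo $\mu\otimes\mu^{*}$-null sets. This is exactly the content of the remark immediately following the definition of the difference operator, whose proof uses Remark \ref{rem:abscont} to transport $\mu^{*}$-negligible sets under the shift $(x,\omega)\mapsto(x,\omega+\delta_{x})$; with this in hand the reduction described above is entirely justified.
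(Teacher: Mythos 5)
Your proof is correct and follows essentially the same route as the paper's: both arguments reduce to the $L^{2}\left(\mu^{*}\right)$ setting by composing $F$ with bounded measurable maps (the paper uses the indicators $1_{A}\circ F$ over all Borel sets $A$, you use a single bounded injective $\phi$) and then invoke Proposition \ref{prop:LastPen} with $f\equiv0$. Your variant with an injective $\phi$ is a slightly tidier way to recover the constancy of $F$ itself from that of its bounded image.
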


\begin{proof}
For $\mu\otimes\mu^{*}$-almost all $\left(x,\omega\right)\in X\times X^{*}$
we have
\[
F_{\delta}\left(x,\omega\right)=F\left(\omega\right),
\]

thus, for every Borel set $A$, we get
\[
1_{A}\circ F_{\delta}\left(x,\omega\right)=1_{A}\circ F\left(\omega\right),
\]

thus, $\mu\otimes\mu^{*}$-a.e:
\[
D\left(1_{A}\circ F\right)=0.
\]

But $1_{A}\circ F\in L^{2}\left(\mu^{*}\right)$, therefore it is
constant by Proposition \ref{prop:LastPen}. Since this is true for
all Borel sets, this gives the result.
\end{proof}
We can now formulate our characterization of stochastic integrals:
\begin{thm}
\label{prop:difference operators stoch int} A measurable function
$F$ on $\left(X^{*},\mathcal{A}^{*}\right)$ is a stochastic integral
if and only if there exists a measurable function $f$ on $\left(X,\mathcal{A}\right)$
such that for $\mu\otimes\mu^{*}$-a.e. $\left(x,\omega\right)\in X\times X^{*}$,
\[
\left(D_{x}F\right)\left(\omega\right)=f\left(x\right),
\]

in particular there exists $c\in\mathbb{C}$ such that $F=I\left(f\right)+c$.
\end{thm}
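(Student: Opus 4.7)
One direction is immediate from Proposition \ref{prop:diffInt}: if $F = I(g)+c$ for some $g \in L^{\chi}(\mu)$ and $c \in \mathbb{C}$, then, since the difference operator annihilates constants, $(D_xF)(\omega) = (D_xI(g))(\omega) = g(x)$ for $\mu\otimes\mu^{*}$-a.e.\ $(x,\omega)$. So the substantive direction is the converse, for which I would proceed in two steps: first prove that the $f$ given by hypothesis must lie in $L^{\chi}(\mu)$, then compare $F$ with $I(f)$ using the two lemmas at hand. Once $f \in L^{\chi}(\mu)$, Proposition \ref{prop:diffInt} gives $D_xI(f)(\omega) = f(x)$, hence $D(F - I(f)) = 0$ $\mu\otimes\mu^{*}$-a.e., and Lemma \ref{lem:DF=00003D0} forces $F - I(f)$ to be $\mu^{*}$-a.e.\ constant, giving the desired representation $F = I(f)+c$.

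The whole difficulty is thus to extract from the raw identity $(D_xF)(\omega)=f(x)$ the integrability $\int_X (|f|^2\wedge 1)\,d\mu < \infty$. The key idea is to apply Proposition \ref{prop:LastPen} to the family of bounded test functionals $G_t := e^{itF}$ ($t \in \mathbb{R}$), which are automatically in $L^{2}(\mu^{*})$. From $F(\omega+\delta_x) = F(\omega)+f(x)$ one obtains the factorization
\[
(D_xG_t)(\omega) = e^{itF(\omega)}\bigl(e^{itf(x)}-1\bigr),
\qquad
\mathbb{E}_{\mu^{*}}[D_xG_t] = \chi_F(t)\bigl(e^{itf(x)}-1\bigr),
\]
where $\chi_F(t) := \mathbb{E}_{\mu^{*}}[e^{itF}]$. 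The Fock/Poincar\'e estimate underlying Proposition \ref{prop:LastPen}, namely $\|\mathbb{E}[DG]\|_{L^{2}(\mu)}^{2} \le \|G - \mathbb{E}[G]\|_{L^{2}(\mu^{*})}^{2}$, applied to $G_t$ yields
\[
|\chi_F(t)|^{2}\int_{X}|e^{itf(x)}-1|^{2}\,d\mu(x) \le 1 - |\chi_F(t)|^{2}.
\]

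Since $\chi_F$ is continuous with $\chi_F(0)=1$, I would pick $\delta>0$ such that $|\chi_F(t)|\ge \tfrac{1}{2}$ on $[-\delta,\delta]$, which gives the \emph{uniform} bound $\int_X|e^{itf(x)}-1|^{2}\,d\mu(x) \le 4$ for $|t|\le\delta$. Integrating this bound in $t$ and swapping integrals (Fubini, positive integrand) produces
\[
\int_{X}\!\!\int_{0}^{\delta}|e^{itf(x)}-1|^{2}\,dt\,d\mu(x) = \int_{X}\!\left(2\delta - \tfrac{2\sin(\delta f(x))}{f(x)}\right)d\mu(x) \le 4\delta,
\]
and the elementary inequality $2\delta - 2\sin(\delta u)/u \ge c_{\delta}\min(u^{2},1)$ (with $c_\delta>0$, checked by Taylor expansion near $0$ and by $|\mathrm{sinc}|\le 1/(\delta|u|)\to 0$ for large $|u|$) gives $\int_{X}\min(f^{2},1)\,d\mu < \infty$, i.e.\ $f \in L^{\chi}(\mu)$, completing the argument.

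The main obstacle is precisely the \emph{uniform} $L^{2}$-bound on $e^{itf}-1$ for $t$ in a neighborhood of $0$: Proposition \ref{prop:LastPen} as stated only guarantees finiteness of this quantity for each $t$, which would not suffice for the Fubini step. I would therefore need to either invoke the sharper Poincar\'e/chaos inequality (which is the engine of Proposition \ref{prop:LastPen} anyway) or revisit its proof to extract this bound. For complex-valued $F$, I would decompose $F = \mathfrak{Re}F + i\mathfrak{Im}F$, apply the real-valued argument to each coordinate by testing with $e^{it\mathfrak{Re}F + is\mathfrak{Im}F}$ for $(t,s)$ in a neighborhood of the origin, and reassemble using Remark \ref{rem:Almostlinear}, which absorbs the additional constant arising from the non-linearity of $I$.
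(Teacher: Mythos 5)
Your proposal is correct and follows the same skeleton as the paper's proof: the forward direction from Proposition \ref{prop:diffInt}, then, for the converse, testing with $e^{itF}$, factorizing $D_x\left(e^{itF}\right)\left(\omega\right)=e^{itF\left(\omega\right)}\left(e^{itf\left(x\right)}-1\right)$, invoking Proposition \ref{prop:LastPen} to place $e^{itf}-1$ in $L^{2}\left(\mu\right)$, deducing $f\in L^{\chi}\left(\mu\right)$, and concluding via Proposition \ref{prop:diffInt} and Lemma \ref{lem:DF=00003D0}. The one step you carry out differently is the extraction of $\int_{X}\min\left(f^{2},1\right)d\mu<\infty$: the paper picks two frequencies $t_{1},t_{2}$ and uses the pointwise inequalities $\sin^{2}\left(\tfrac{1}{2}t_{1}y\right)+\sin^{2}\left(\tfrac{1}{2}t_{2}y\right)>\epsilon$ for $\left|y\right|>1$ and $\sin^{2}\left(1\right)z^{2}\le\sin^{2}\left(z\right)$ on $\left[-1,1\right]$, so it only needs the qualitative conclusion of Proposition \ref{prop:LastPen} at those two values of $t$, whereas you integrate a uniform bound over $t\in\left[0,\delta\right]$ and apply Fubini, which --- as you correctly flag --- requires the quantitative first-chaos (Poincar\'e-type) inequality $\left\Vert \mathbb{E}_{\mu^{*}}\left[D_{\cdot}G\right]\right\Vert _{L^{2}\left(\mu\right)}^{2}\le\left\Vert G-\mathbb{E}_{\mu^{*}}\left[G\right]\right\Vert _{L^{2}\left(\mu^{*}\right)}^{2}$ rather than Proposition \ref{prop:LastPen} as literally stated. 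That inequality is true (it is exactly the statement that $x\mapsto\mathbb{E}_{\mu^{*}}\left[D_{x}G\right]$ is the kernel of the first-chaos component of $G$ in the Last--Penrose decomposition), so your argument is sound; it even has the minor advantage of automatically guaranteeing $\mathbb{E}_{\mu^{*}}\left[e^{itF}\right]\neq0$ on the range of $t$ you use, a point the paper's division step leaves implicit. The paper's two-frequency trick remains the more economical route given the lemmas actually stated, but both work, and your treatment of the complex case and of the final identification $F=I\left(f\right)+c$ matches the paper's.
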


\begin{proof}
Only one direction has to be proved, thanks to Proposition \ref{prop:diffInt}.
Assume $F$ is a measurable, real valued function $F$ on $\left(X^{*},\mathcal{A}^{*}\right)$
and there exists a measurable, real valued function $f$ on $\left(X,\mathcal{A}\right)$
such that for $\mu\otimes\mu^{*}$-a.e. $\left(x,\omega\right)\in X\times X^{*}$,
\[
\left(D_{x}F\right)\left(\omega\right)=f\left(x\right).
\]

For any $t\in\mathbb{R}$ and for $\mu\otimes\mu^{*}$-a.e. $\left(x,\omega\right)\in X\times X^{*}$
we have
\begin{align*}
\exp itF_{\delta}\left(x,\omega\right)-\exp itF\left(\omega\right) & =\exp itf\left(x\right)\exp itF\left(\omega\right)-\exp itF\left(\omega\right)\\
 & =\exp itF\left(\omega\right)\left(\exp itf\left(x\right)-1\right).
\end{align*}

Therefore
\[
\mathbb{E}_{\mu^{*}}\left[D_{x}\left(\exp itF\right)\right]=\mathbb{E}_{\mu^{*}}\left[\exp itF\right]\left(\exp itf\left(x\right)-1\right).
\]

Now from Proposition \ref{prop:LastPen}, since $\exp itF\in L^{2}\left(\mu^{*}\right)$,
then $x\mapsto\mathbb{E}_{\mu^{*}}\left[D_{x}\left(\exp itF\right)\right]\in L^{2}\left(\mu\right)$
and therefore $\exp itf-1\in L^{2}\left(\mu\right)$ and:
\[
\int_{X}\left|\exp itf\left(x\right)-1\right|^{2}\mu\left(dx\right)=4\int_{X}\sin^{2}\left(\frac{1}{2}tf\left(x\right)\right)\mu\left(dx\right)<+\infty.
\]

We can find $t_{1}$ and $t_{2}$ such that there exists $\epsilon>0$,
such that, for any $\left|y\right|>1$, $\sin^{2}\left(\frac{1}{2}t_{1}y\right)+\sin^{2}\left(\frac{1}{2}t_{2}y\right)>\epsilon$.
In particular,
\[
\int_{X}\epsilon1_{\left|f\right|>1}\mu\left(dx\right)\le\int_{X}\sin^{2}\left(\frac{1}{2}t_{1}f\left(x\right)\right)+\sin^{2}\left(\frac{1}{2}t_{2}f\left(x\right)\right)\mu\left(dx\right)<\infty
\]

that is
\[
\mu\left(\left|f\right|>1\right)<\infty
\]

and, as $\sin^{2}\left(1\right)z^{2}\le\sin^{2}\left(z\right)$ on
$\left[-1,1\right]$,
\[
\int_{X}f^{2}1_{\left|f\right|\le1}d\mu\le\int_{X}\frac{4}{t_{1}^{2}\sin^{2}\left(1\right)}\sin^{2}\left(\frac{1}{2}t_{1}f\left(x\right)\right)\mu\left(dx\right)<\infty.
\]

It follows that $f\in L^{\chi}\left(\mu\right)$ and thus the stochastic
integral $I\left(f\right)$ is well defined and satisfies, for $\mu\otimes\mu^{*}$-a.e.
$\left(x,\omega\right)\in X\times X^{*}$ $D_{x}\left(I\left(f\right)\right)\left(\omega\right)=f\left(x\right)$.

Therefore, $\mu\otimes\mu^{*}$-a.e. $D\left(F-I\left(f\right)\right)=0$
and then $F=I\left(f\right)+c$ for some $c\in\mathbb{R}$, thanks
to Lemma \ref{lem:DF=00003D0}.

We get the complex case by considering real an imaginary parts.
\end{proof}

\subsection{The set of stochastic integrals}

Denote by $\mathcal{I}_{\mu^{*}}\subset L^{0}\left(\mu^{*}\right)$
the set of stochastic integrals. As a first consequence of Theorem
\ref{prop:difference operators stoch int}, we get:
\begin{prop}
\label{prop:closnessofI} $\mathcal{I}_{\mu^{*}}$ is a closed subspace
of $L^{0}\left(\mu^{*}\right)$ with respect to convergence in measure.
\end{prop}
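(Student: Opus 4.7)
The plan is to apply the characterization given by Theorem \ref{prop:difference operators stoch int}: a measurable function $F$ on $(X^{*},\mathcal{A}^{*})$ belongs to $\mathcal{I}_{\mu^{*}}$ precisely when $(x,\omega)\mapsto D_{x}F(\omega)$ agrees $\mu\otimes\mu^{*}$-a.e.\ with a function depending only on the $x$-variable. The linear-subspace property then falls out for free: the difference operator $D$ is linear and annihilates constants, so if $D_{x}F_{i}(\omega)=f_{i}(x)$ for $i=1,2$ and $c\in\mathbb{C}$, one has $D_{x}(\alpha F_{1}+F_{2}+c)(\omega)=\alpha f_{1}(x)+f_{2}(x)$, which is still a function of $x$ alone.

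For closedness, I would take a sequence $F_{n}\in\mathcal{I}_{\mu^{*}}$ converging to some $F\in L^{0}(\mu^{*})$ in $\mu^{*}$-measure, and write $D_{x}F_{n}(\omega)=f_{n}(x)$ $\mu\otimes\mu^{*}$-a.e.\ as provided by Proposition \ref{prop:diffInt}. The goal is to show that $DF_{n}\to DF$ in $\mu\otimes\mu^{*}$-measure on every set of finite $\mu\otimes\mu^{*}$-measure. Once this is established, a subsequence $(n_{k})$ can be extracted along which $(x,\omega)\mapsto f_{n_{k}}(x)$ converges $\mu\otimes\mu^{*}$-a.e.\ to $D_{x}F(\omega)$; Fubini then forces, for $\mu$-a.e.\ $x$, the $\mu^{*}$-a.e.\ limit $D_{x}F(\omega)$ to be constant in $\omega$ and equal to a measurable function $f(x):=\lim_{k}f_{n_{k}}(x)$. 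Theorem \ref{prop:difference operators stoch int} then delivers $F\in\mathcal{I}_{\mu^{*}}$.

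The only real difficulty is establishing the local convergence $DF_{n}\to DF$ in $\mu\otimes\mu^{*}$-measure. Using the decomposition
\[
D_{x}F_{n}(\omega)-D_{x}F(\omega)=(F_{n}-F)_{\delta}(x,\omega)-(F_{n}-F)(\omega),
\]
the second term is independent of $x$, so its contribution over any slab $A\times X^{*}$ with $\mu(A)<\infty$ is simply $\mu(A)\cdot\mu^{*}(|F_{n}-F|>\varepsilon)$, which tends to $0$. For the first term, Mecke's formula identifies the pushforward of $\mu\otimes\mu^{*}$ under $(x,\omega)\mapsto(x,\omega+\delta_{x})$ with the Campbell measure $Q$, giving
\[
\mu\otimes\mu^{*}\bigl(\{(x,\omega)\in A\times X^{*}:|(F_{n}-F)_{\delta}(x,\omega)|>\varepsilon\}\bigr)=\int_{X^{*}}\omega(A)\,\mathbf{1}_{\{|F_{n}-F|>\varepsilon\}}(\omega)\,\mu^{*}(d\omega).
\]
Since $\omega\mapsto\omega(A)=N(A)(\omega)$ lies in $L^{1}(\mu^{*})$ (with integral $\mu(A)$) and $\mathbf{1}_{\{|F_{n}-F|>\varepsilon\}}\to 0$ in probability, the right-hand side tends to $0$ by the usual dominated-convergence-in-measure argument, which closes the proof.
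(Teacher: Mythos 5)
Your proof is correct, and its skeleton is the same as the paper's: both reduce the closedness of $\mathcal{I}_{\mu^{*}}$ to the difference-operator characterization of Theorem \ref{prop:difference operators stoch int}, pass to a subsequence along which the difference operators converge $\mu\otimes\mu^{*}$-a.e., identify the limiting function $f$ of the $x$-variable alone via Fubini, and conclude with that theorem; the subspace property likewise comes from the linearity of $D$ and $D(\mathrm{const})=0$, which is what the paper extracts from Remark \ref{rem:Almostlinear}. Where you genuinely diverge is in how the shifted term $(F_{n}-F)_{\delta}$ is controlled. The paper first extracts a subsequence with $F_{n_{k}}\to F$ $\mu^{*}$-a.e.\ and then invokes the qualitative equivalence $\widetilde{Q}\sim\mu\otimes\mu^{*}$ of Remark \ref{rem:abscont} to transfer this to $F_{n_{k}\delta}\to F_{\delta}$ $\mu\otimes\mu^{*}$-a.e.; you instead keep the whole sequence and prove convergence in measure of $DF_{n}$ to $DF$ on each slab $A\times X^{*}$ with $\mu\left(A\right)<\infty$, by applying Mecke's formula to $1_{A}\left(x\right)1_{\left\{ \left|F_{n}-F\right|>\varepsilon\right\} }\left(\omega\right)$ and dominating the resulting integrand by $N\left(A\right)\in L^{1}\left(\mu^{*}\right)$ --- a computation that checks out. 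Your version is more quantitative and bypasses Remark \ref{rem:abscont} entirely, at the modest cost of an extra diagonal extraction over a countable exhaustion of $X$ by finite-measure sets (local convergence in measure only yields a.e.-convergent subsequences slab by slab, and $\mu\otimes\mu^{*}$ is infinite; this step is routine but should be stated). The paper's route is shorter once Remark \ref{rem:abscont} is in hand; yours is self-contained from Mecke's formula alone.
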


\begin{proof}
The fact that it is a linear space follows immediately from Remark
\ref{rem:Almostlinear}.

Let $\left\{ F_{n}\right\} _{n\in\mathbb{N}}:=\left\{ I\left(f_{n}\right)+c_{n}\right\} _{n\in\mathbb{N}}$
be a sequence of stochastic integrals converging to some random variable
$F$ with respect to $\mu^{*}$. There exists a subsequence $\left\{ F_{n_{k}}\right\} _{k\in\mathbb{N}}$
converging to $F$ $\mu^{*}$-a.e.

For the same reason already explained earlier $\left\{ F_{n_{k}\delta}\right\} _{k\in\mathbb{N}}$
converges $\widetilde{Q}$-a.e. and thus $\mu\otimes\mu^{*}$-a.e.
to $F_{\delta}$. Now from Proposition \ref{prop:diffInt}, for $\mu\otimes\mu^{*}$-a.e.
$\left(x,\omega\right)\in X\times X^{*}$:

\[
f_{n_{k}}\left(x\right)=D_{x}F_{n_{k}}\left(\omega\right)=F_{n_{k}\delta}\left(x,\omega\right)-F_{n_{k}}\left(\omega\right).
\]

Therefore $\left\{ f_{n_{k}}\right\} _{k\in\mathbb{N}}$ converges
$\mu$-a.e. to some measurable $f$ satisfying
\[
f\left(x\right)=F_{\delta}\left(x,\omega\right)-F\left(\omega\right)=D_{x}F\left(\omega\right).
\]

Thanks to Proposition \ref{prop:difference operators stoch int},
$F=I\left(f\right)+c$ for some $c\in\mathbb{C}$ as expected.
\end{proof}

\section{\label{sec:Square-integrable-stochastic-integrals}Square-integrable
stochastic integrals, Fock space structure}

In this section, we recall the classical $L^{2}$-case that relies
almost entirely on the Hilbertian structure (the details can be found
in \cite{Ner1996CatInfGroups}, Chapter 10, Section 4 and also \cite{LastPen11Fock}).

The starting point is the following isometry identity: for any $A$
and $B$ in $\mathcal{A}_{f}$,

\[
\mathbb{E}_{\mu^{*}}\left[\left(N\left(A\right)-\mu\left(A\right)\right)\left(N\left(B\right)-\mu\left(B\right)\right)\right]=\mu\left(A\cap B\right).
\]

And we get, for any functions $f$ and $g$ in $L^{1}\left(\mu\right)\cap L^{2}\left(\mu\right)$:
\[
\left\langle I_{1}\left(f\right),I_{1}\left(g\right)\right\rangle _{L^{2}\left(\mu^{*}\right)}=\left\langle f,g\right\rangle _{L^{2}\left(\mu\right)},
\]

where $I_{1}$ was defined by (\ref{eq:I_1}).

Moreover, from Remark \ref{rem:ID Random variable} together with
the square integrability criterion found in \cite{Sato99LevPro},
page 163, a stochastic integral $I\left(h\right)$ is square integrable
if and only if $h\in L^{2}\left(\mu\right)$.

Setting $\mathcal{I}_{2}:=\mathcal{I}\cap L_{0}^{2}\left(\mu^{*}\right)$
this leads immediately to:
\begin{thm}
\label{thm:L^2 isomorphism}$I_{1}$ extends to an isometric isomorphism
between the Hilbert spaces $\left(L^{2}\left(\mu\right),\left\Vert \cdot\right\Vert _{2}\right)$
and $\left(\mathcal{I}_{2},\left\Vert \cdot\right\Vert _{L^{2}\left(\mu^{*}\right)}\right)$.
In particular, for any $f$ and $g$ in $L^{2}\left(\mu\right)$:
\[
\left\langle I_{1}\left(f\right),I_{1}\left(g\right)\right\rangle _{L^{2}\left(\mu^{*}\right)}=\left\langle f,g\right\rangle _{L^{2}\left(\mu\right)},
\]

and
\[
\left\Vert I_{1}\left(f\right)\right\Vert _{L^{2}\left(\mu^{*}\right)}=\left\Vert f\right\Vert _{2}.
\]
\end{thm}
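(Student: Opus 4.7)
The plan is to use the isometry identity $\langle I_1(f), I_1(g)\rangle_{L^2(\mu^*)} = \langle f, g\rangle_{L^2(\mu)}$ already established for $f, g \in L^1(\mu) \cap L^2(\mu)$, together with the fact that $L^1(\mu) \cap L^2(\mu)$ is $\|\cdot\|_2$-dense in $L^2(\mu)$, to extend $I_1$ to a linear isometry from $L^2(\mu)$ into $L^2(\mu^*)$. The density follows by truncation: for $f \in L^2(\mu)$, the cutoffs $f_n := f \cdot \mathbf{1}_{\{|f|>1/n\}}$ satisfy $\mu\{|f|>1/n\} < \infty$ by Chebyshev's inequality, hence lie in $L^1 \cap L^2$ by Cauchy--Schwarz, and $\|f-f_n\|_2 \to 0$ by dominated convergence. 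The extended map is then an $L^2$-isometry, and polarization transfers the inner-product identity to all of $L^2(\mu)$.

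To see that the image lies in $\mathcal{I}_2$, note that each $I_1(f_n)$ is a stochastic integral (one can write $I_1(f_n) = I(f_n) - \mu(f_n \mathbf{1}_{|f_n|>1})$ since $f_n \in L^1$), and $\mathcal{I}_{\mu^*}$ is closed under convergence in measure by Proposition \ref{prop:closnessofI}. Since $L^2$-convergence implies convergence in measure, the extended $I_1(f)$ belongs to $\mathcal{I}_{\mu^*}$. Each $I_1(f_n)$ is centered, and $L^2$-convergence preserves expectations, so $I_1(f) \in L^2_0(\mu^*)$, giving membership in $\mathcal{I}_2$.

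The remaining step, surjectivity, is the main obstacle. Given $F \in \mathcal{I}_2$, Theorem \ref{prop:difference operators stoch int} yields $h \in L^\chi(\mu)$ and $c \in \mathbb{C}$ with $F = I(h) + c$ and $D_x F(\omega) = h(x)$ $\mu \otimes \mu^*$-a.e. Because $F \in L^2(\mu^*)$, Proposition \ref{prop:LastPen} forces $x \mapsto \mathbb{E}_{\mu^*}[D_x F] = h(x)$ to belong to $L^2(\mu)$. The extension constructed above therefore produces $I_1(h) \in \mathcal{I}_2$, and it remains to identify $F$ with $I_1(h)$. Approximating $h$ by $h_n := h \mathbf{1}_{\{1/n < |h| \leq n\}} \in L^1 \cap L^2$, we get $I_1(h_n) \to I_1(h)$ in $L^2(\mu^*)$ by the extended isometry, while $I(h_n) \to I(h)$ in probability by the very definition of the stochastic integral. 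Since these two sequences differ by a deterministic constant for each $n$, the difference $F - I_1(h)$ is a constant, which must vanish because both $F$ and $I_1(h)$ are centered.
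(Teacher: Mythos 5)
Your proof is correct, but it reaches the conclusion by a genuinely different route than the paper. The paper treats the whole statement as an immediate consequence of two facts: the isometry identity on $L^{1}\left(\mu\right)\cap L^{2}\left(\mu\right)$, and the second-moment formula (\ref{eq:L2isom}) for centered IDp laws, which — since the L\'evy measure of $I\left(h\right)$ is the image of $\mu$ under $h$ — gives directly that $I\left(h\right)\in L^{2}\left(\mu^{*}\right)$ if and only if $h\in L^{2}\left(\mu\right)$, with $\left\Vert I_{1}\left(h\right)\right\Vert _{L^{2}\left(\mu^{*}\right)}=\left\Vert h\right\Vert _{2}$; injectivity, surjectivity onto $\mathcal{I}_{2}=\mathcal{I}\cap L_{0}^{2}\left(\mu^{*}\right)$ and the isometry then all fall out at once. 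You instead extend $I_{1}$ by density from $L^{1}\left(\mu\right)\cap L^{2}\left(\mu\right)$, place the image in $\mathcal{I}_{2}$ via the closedness of $\mathcal{I}_{\mu^{*}}$ under convergence in measure (Proposition \ref{prop:closnessofI}), and obtain surjectivity from the difference-operator machinery: $D_{x}F=h\left(x\right)$ plus Proposition \ref{prop:LastPen} forces $h\in L^{2}\left(\mu\right)$. This is longer but stays entirely inside the toolkit of Section \ref{sec:Background-on-Poisson} and avoids invoking the L\'evy--Khintchine moment identity, which is a legitimate trade-off; it also makes the surjectivity argument, which the paper leaves implicit in ``this leads immediately to,'' fully explicit. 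Two small remarks. First, your truncation $h_{n}:=h1_{\left\{ 1/n<\left|h\right|\le n\right\} }$ introduces an upper cut-off that is not needed ($h1_{\left\{ \left|h\right|>1/n\right\} }$ is already in $L^{1}\left(\mu\right)\cap L^{2}\left(\mu\right)$ by Chebyshev and Cauchy--Schwarz) and makes the claim ``$I\left(h_{n}\right)\to I\left(h\right)$ in probability by the very definition'' not quite literal, since the defining sequence only truncates from below; dropping the upper cut-off removes this wrinkle. Second, the final identification $F=I_{1}\left(h\right)$ can be shortened: once you know $D_{x}I_{1}\left(h\right)=h\left(x\right)=D_{x}F$, Lemma \ref{lem:DF=00003D0} gives that $F-I_{1}\left(h\right)$ is constant, and centering kills the constant, so the limit-of-constants argument is not needed.
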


The above theorem is actually the ground for the following fundamental
result (see \cite{Ner1996CatInfGroups}, Chapter 10, Section 4):
\begin{thm}
\label{thm:Fock}There is a natural isometric identification of $L^{2}\left(\mu^{*}\right)$
as the Fock space $F\left(\text{\ensuremath{L^{2}\left(\mu\right)}}\right)$
over $L^{2}\left(\mu\right)$:
\[
L^{2}\left(\mu^{*}\right)\simeq F\left(\text{\ensuremath{L^{2}\left(\mu\right)}}\right):=\mathbb{C}\oplus L^{2}\left(\mu\right)\oplus L^{2}\left(\mu\right)^{\odot2}\oplus\cdots\oplus L^{2}\left(\mu\right)^{\odot n}\oplus\cdots
\]
\end{thm}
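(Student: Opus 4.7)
The plan is to construct an isometry $\mathcal{U}:F(L^2(\mu))\to L^2(\mu^*)$ by defining its components on successive chaoses via multiple stochastic integrals, and then to establish surjectivity by density. The degree-zero component is the embedding of $\mathbb{C}$ as constants in $L^2(\mu^*)$, and the degree-one component is $I_1$ from Theorem \ref{thm:L^2 isomorphism}. For $n\ge 2$, I would first define, on a symmetric ``off-diagonal'' simple tensor $f_n=1_{A_1}\odot\cdots\odot 1_{A_n}$ with $A_i\in\mathcal{A}_f$ pairwise disjoint, the multiple integral
$$ I_n(f_n):=I_1(1_{A_1})\cdots I_1(1_{A_n}), $$
and extend by linearity to all symmetric simple functions vanishing on the diagonals of $X^n$.

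Next I would verify the key identity $\|I_n(f_n)\|_{L^2(\mu^*)}^2=n!\,\|f_n\|_{L^2(\mu^{\otimes n})}^2$ together with the cross-orthogonality $\langle I_n(f_n), I_m(g_m)\rangle_{L^2(\mu^*)}=0$ for $n\ne m$. Both identities reduce to products of one-variable expectations, using independence of $\{N(A_i)-\mu(A_i)\}$ on disjoint $A_i$, the vanishing of each factor's mean, and Theorem \ref{thm:L^2 isomorphism} as the base case. Since symmetric off-diagonal simple functions are dense in $L^2(\mu)^{\odot n}$, $I_n$ extends uniquely to an isometry (with the normalization $\|I_n(f_n)\|=\sqrt{n!}\,\|f_n\|$), and assembling all the chaoses yields a well-defined isometric embedding $\mathcal{U}:F(L^2(\mu))\hookrightarrow L^2(\mu^*)$.

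The main obstacle is surjectivity of $\mathcal{U}$. I would handle it via \emph{exponential vectors}: for bounded $g$ of $\mu$-finite support with $1+g$ bounded away from $0$, set
$$ \mathcal{E}(g)(\omega):=\exp\Bigl(\int_X\log(1+g)\,d\omega-\int_X g\,d\mu\Bigr), $$
so that a direct computation using independence of $N$ on disjoint sets gives $\mathbb{E}_{\mu^*}\bigl[\mathcal{E}(g)\overline{\mathcal{E}(h)}\bigr]=\exp\langle g,h\rangle_{L^2(\mu)}$, which is exactly the inner product of the Fock exponentials $\bigoplus_n g^{\odot n}/n!$ and $\bigoplus_n h^{\odot n}/n!$. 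Expanding $\mathcal{E}(tg)$ as a formal power series in $t$ then identifies it, chaos by chaos, with this Fock exponential, so each $\mathcal{E}(g)$ lies in the range of $\mathcal{U}$ and matches a canonical vector. Density of $\mathrm{span}\{\mathcal{E}(g)\}$ in $L^2(\mu^*)$ is obtained through a monotone-class argument: $\mathcal{A}^*$ is generated by $\{N(A):A\in\mathcal{A}_f\}$, and for pairwise disjoint $A_1,\ldots,A_k\in\mathcal{A}_f$ the exponentials $\mathcal{E}(\sum_j t_j 1_{A_j})$ expand into linear combinations of monomials $\prod_j N(A_j)^{n_j}$, which are total in $L^2$ of the joint law of $(N(A_1),\ldots,N(A_k))$ because Poisson distributions are determined by their moments. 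This yields surjectivity, and the identification of $\mathcal{E}(g)$ with its Fock counterpart pins down $\mathcal{U}$ as the natural isomorphism of the statement.
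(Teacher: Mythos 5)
Your proposal is correct and follows the same construction the paper sketches (and delegates to \cite{Ner1996CatInfGroups} and \cite{LastPen11Fock}): the multiple integrals $I_{n}$ you define on off-diagonal simple tensors coincide with the paper's $\int_{\Delta_{n}^{c}}f^{\otimes n}d\left(N-\mu\right)^{\otimes n}$, and the isometry and cross-orthogonality relations are exactly the ones the paper states without proof. What you add, and what the paper omits entirely, is the surjectivity step; your route via exponential vectors $\mathcal{E}\left(g\right)$, the reproducing identity $\mathbb{E}_{\mu^{*}}\left[\mathcal{E}\left(g\right)\overline{\mathcal{E}\left(h\right)}\right]=\exp\left\langle g,h\right\rangle _{L^{2}\left(\mu\right)}$, and moment-determinacy of Poisson laws is the standard and correct way to close that gap (one uses that $N\left(A\right)$ has finite exponential moments, so polynomials are dense in $L^{2}$ of the joint law of $\left(N\left(A_{1}\right),\dots,N\left(A_{k}\right)\right)$, then martingale convergence over the generating algebra). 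Two points deserve more care than your sketch gives them: (i) the isometry for two \emph{different} off-diagonal simple functions requires passing to a common disjoint refinement of the underlying sets before independence can be invoked; (ii) the ``formal power series'' identification of $\mathcal{E}\left(tg\right)$ with $\sum_{n}t^{n}I_{n}\left(g^{\otimes n}\right)/n!$ is the real content of the surjectivity argument and should be justified either by the $L^{2}$-convergence estimate $\sum_{n}\left|t\right|^{2n}\left\Vert g\right\Vert _{2}^{2n}/n!<\infty$ together with an explicit computation of $\left\langle \mathcal{E}\left(g\right),I_{n}\left(h^{\otimes n}\right)\right\rangle _{L^{2}\left(\mu^{*}\right)}$, or equivalently by the classical Charlier-polynomial expansion of $\left(1+t\right)^{N\left(A\right)}e^{-t\mu\left(A\right)}$; as written it is a plausibility argument rather than a proof. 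Neither issue is a genuine obstruction.
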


The latter space is understood as the completion of the infinite orthogonal
sum of the symmetric tensor products of $L^{2}\left(\mu\right)$ with
the appropriately scaled scalar product.

Let us succinctly describe how this identification is actually implemented:

Set, for any $n\ge1$, the \emph{stochastic integral of order $n$}
by the following formula:
\[
I_{n}\left(f^{\otimes n}\right):=\int_{\Delta_{n}^{c}}f^{\otimes n}d\left(N-\mu\right)^{\otimes n}\in L^{2}\left(\mu^{*}\right).
\]

where $\Delta_{n}$ is the subset of $X^{n}$ where at least two coordinates
coincide, $f$ is a simple function in $L^{2}\left(\mu\right)$ and
$f^{\otimes}$ its tensor product, an element of $L^{2}\left(\mu^{\otimes n}\right)_{\mid sym}\simeq L^{2}\left(\mu\right)^{\odot n}$.

The following formula holds for simple functions $f$ and $g$ in
$L^{2}\left(\mu\right)$:
\[
\left\langle I_{n}\left(f^{\otimes n}\right),I_{n}\left(g^{\otimes n}\right)\right\rangle _{L^{2}\left(\mu^{*}\right)}=n!\left\langle f^{\otimes n},g^{\otimes n}\right\rangle _{L^{2}\left(\mu\right)^{\odot n}}
\]

and if $n\neq m$,
\[
\left\langle I_{n}\left(f^{\otimes n}\right),I_{n}\left(g^{\otimes m}\right)\right\rangle _{L^{2}\left(\mu^{*}\right)}=0.
\]

The formula $I_{n}\left(f^{\otimes n}\right)$ extends isometrically
to any $f\in L^{2}\left(\mu\right)$ and if we set $\mathcal{I}_{2}^{\left(0\right)}=\mathbb{C}\cdot1_{X^{*}}$,
$\mathcal{I}_{2}^{\left(1\right)}=\mathcal{I}_{2}$ and $\mathcal{I}_{2}^{\left(n\right)}=\text{Span}\left\langle I_{n}\left(f^{\otimes n}\right),f\in L^{2}\left(\mu\right)\right\rangle $,
one gets the orthogonal sum:
\[
L^{2}\left(\mu^{*}\right)=\overline{\mathcal{I}_{2}^{\left(0\right)}\oplus\mathcal{I}_{2}^{\left(1\right)}\oplus\mathcal{I}_{2}^{\left(2\right)}\oplus\cdots\oplus\mathcal{I}_{2}^{\left(n\right)}\oplus\cdots}
\]

where each $\mathcal{I}_{2}^{\left(n\right)}$ is isometrically identified
to $\left(L^{2}\left(\mu\right)^{\odot n},n!\left\langle \cdot,\cdot\right\rangle _{L^{2}\left(\mu\right)^{\odot n}}\right)$
through $I_{n}$ and called the \emph{chaos of order $n$}.

Observe now that if $T$ is an endomorphism of $\left(X,\mathcal{A},\mu\right)$,
then, for any $f\in L^{2}\left(\mu\right)$:
\[
I_{n}\left(f\right)\circ T_{*}=I_{n}\left(\left(f\circ T\right)^{\otimes n}\right),
\]

in particular, $T_{*}$ preserves each chaos.

We end this section by the following lemma that we shall need later,
it is obvious given the above discussion:
\begin{lem}
\label{lem:projection}On $L^{2}\left(\mu^{*}\right)$, let $P$ be
the orthogonal projection on $\mathcal{I}_{2}$. Then $P$ and $T_{*}$
commute.
\end{lem}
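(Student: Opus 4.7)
The plan is to leverage the Fock space decomposition recalled in Theorem \ref{thm:Fock} together with the explicit observation that $T_*$ acts chaos-wise according to the formula $I_n(f^{\otimes n}) \circ T_* = I_n((f\circ T)^{\otimes n})$.

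First, I would note that the Koopman operator associated with $T_*$, being induced by the measure-preserving transformation $T_*$ on $(X^*,\mathcal{A}^*,\mu^*)$, acts as an isometry (in fact, a unitary operator when $T$ is an automorphism) on $L^2(\mu^*)$. Next I would invoke the chaos-preservation property stated just before the lemma: for each $n\ge 0$, the subspace $\mathcal{I}_2^{(n)}$ is $T_*$-invariant, because it is spanned by elements of the form $I_n(f^{\otimes n})$ and $T_*$ sends such an element to $I_n((f\circ T)^{\otimes n})$, which still lies in $\mathcal{I}_2^{(n)}$ since $f\circ T\in L^2(\mu)$ ($T$ is measure-preserving).

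From this chaos-wise invariance, both $\mathcal{I}_2 = \mathcal{I}_2^{(1)}$ and its orthogonal complement
\[
\mathcal{I}_2^{\perp} = \overline{\mathcal{I}_2^{(0)} \oplus \bigoplus_{n\ge 2} \mathcal{I}_2^{(n)}}
\]
are $T_*$-invariant. For any isometry $U$ of a Hilbert space that leaves both a closed subspace $\mathcal{M}$ and its orthogonal complement $\mathcal{M}^{\perp}$ invariant, the orthogonal projection $P_{\mathcal{M}}$ commutes with $U$: indeed, writing any $F = F_1 + F_2$ with $F_1 \in \mathcal{M}$, $F_2 \in \mathcal{M}^{\perp}$, one has $UF = UF_1 + UF_2$ with $UF_1 \in \mathcal{M}$ and $UF_2 \in \mathcal{M}^{\perp}$, so $P_{\mathcal{M}}UF = UF_1 = UP_{\mathcal{M}}F$. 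Applying this to $U = T_*$ and $\mathcal{M} = \mathcal{I}_2$ yields $PT_* = T_*P$.

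There is essentially no obstacle here: the entire content is packaged in the Fock-space statement of Theorem \ref{thm:Fock} and the chaos-preservation formula, both of which are cited as known. The only point requiring a line of care is confirming that $\mathcal{I}_2^{\perp}$, and not just each individual $\mathcal{I}_2^{(n)}$, is $T_*$-invariant; this follows because $T_*$ is continuous on $L^2(\mu^*)$ and therefore sends the closure of a direct sum of invariant subspaces into itself.
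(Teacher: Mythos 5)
Your proof is correct and follows exactly the route the paper intends: the paper simply declares the lemma ``obvious given the above discussion,'' meaning the Fock space decomposition of Theorem \ref{thm:Fock} and the chaos-preservation formula $I_{n}\left(f^{\otimes n}\right)\circ T_{*}=I_{n}\left(\left(f\circ T\right)^{\otimes n}\right)$, which is precisely what you use. Your write-up merely makes explicit the standard fact that an isometry leaving both a closed subspace and its orthogonal complement invariant commutes with the orthogonal projection onto that subspace.
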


\section{\label{sec:Integrable-stochastic-integrals}Integrable stochastic
integrals, the Orlicz space $L^{\Phi}\left(\mu\right)$}

The integrable case has deserved considerably less attention than
the square-integrable case, the latter being very handy to work with
given its Hilbertian nature.

Remark \ref{rem:ID Random variable} together with the integrability
criterion found in \cite{Sato99LevPro}, page 163 yield:
\begin{align}
I\left(f\right)\in L^{1}\left(\mu^{*}\right) & \Longleftrightarrow\int_{X}\min\left(\left|f\right|,\left|f\right|^{2}\right)d\mu<\infty.\label{eq:L^1 Stoch Int}
\end{align}

And it turns out that if this condition is satisfied, $\mathbb{E}_{\mu^{*}}\left[I\left(f\right)\right]=\mu\left(f1_{\left|f\right|>1}\right)$
and this allows to extend the definition of $I_{1}$ by setting
\[
I_{1}\left(f\right)=\lim_{\epsilon\to0}N\left(f1_{\left|f\right|>\epsilon}\right)-\mu\left(f1_{\left|f\right|>\epsilon}\right),
\]

in measure.

As a result, we get
\begin{equation}
I\left(f\right)\in L_{0}^{1}\left(\mu^{*}\right)\Longleftrightarrow\int_{X}\min\left(\left|f\right|,\left|f\right|^{2}\right)d\mu<\infty\;\text{and}\:I\left(f\right)=I_{1}\left(f\right).\label{eq:centered stoch int}
\end{equation}

We set $\mathcal{I}_{1}:=\mathcal{I}\cap L_{0}^{1}\left(\mu^{*}\right)$.

\subsection{\label{subsec:Orlicz}The Orlicz space $L^{\Phi}\left(\mu\right)$}

To understand the set $L^{\Phi}\left(\mu\right)$ of functions $f\in L^{0}\left(\mu\right)$
satisfying $\int_{X}\min\left(\left|f\right|,\left|f\right|^{2}\right)d\mu<\infty$,
it was natural (see \cite{Urbanik1967}) to replace it by the equivalent
condition
\[
\int_{X}\Phi\left(\left|f\right|\right)d\mu<\infty,
\]

$\Phi$ being the function on $\mathbb{R}_{+}$ given by:
\[
x\mapsto\begin{cases}
x^{2} & \text{if }x\le1\\
2x-1 & \text{if }x>1
\end{cases}
\]

The consequences are decisive, we list them thereafter, following
the first four chapters of \cite{Rao1991}:

$\Phi$ is a \emph{Young function}, that is
\begin{itemize}
\item $\Phi\left(0\right)=0$
\item $\lim_{x\to\infty}\Phi\left(x\right)=+\infty$
\item $\Phi$ is convex
\end{itemize}
Moreover, $\Phi$ is continuous, strictly increasing and \emph{$\Delta_{2}$-regular},
that is, for all $x\ge0$:
\[
\Phi\left(2x\right)\le4\Phi\left(x\right).
\]

A number of objects associated to such a function come naturally :

One defines $N_{\Phi}$ on $L^{\Phi}\left(\mu\right)$ by:

\[
N_{\Phi}\left(f\right):=\inf\left\{ \lambda>0,\,\int_{X}\Phi\left(\left|\frac{f}{\lambda}\right|\right)d\mu\le1\right\} .
\]

Also the \emph{complementary function} to $\Phi$ is the function
$\Psi$ defined on $\mathbb{R}_{+}$ by
\[
\Psi\left(y\right)=\sup\left\{ xy-\Phi\left(x\right)\right\} 
\]

and in this case, we have
\[
\Psi\left(y\right)=\begin{cases}
y^{2} & \text{if }y\le2\\
+\infty & \text{if }y>2
\end{cases}.
\]

This yields another set
\[
L^{\Psi}\left(\mu\right):=\left\{ g\in L^{0}\left(\mu\right),\exists\alpha>0\,\int_{X}\Psi\left(\left|\alpha g\right|\right)d\mu<\infty\right\} ,
\]

with $N_{\Psi}$ being defined similarly on $L^{\Psi}\left(\mu\right)$
by
\[
N_{\Psi}\left(g\right):=\inf\left\{ \lambda>0,\,\int_{X}\Psi\left(\left|\frac{g}{\lambda}\right|\right)d\mu\le1\right\} .
\]

The complementary function $\Psi$ allows to define the quantity $\left\Vert f\right\Vert _{\Phi}$
for $f\in L^{\Phi}\left(\mu\right)$ by:
\[
\left\Vert f\right\Vert _{\Phi}:=\sup\left\{ \int_{X}\left|fg\right|d\mu,\:\int_{X}\Psi\left(\left|g\right|\right)d\mu\le1\right\} .
\]

We can now sum up the properties of these objects, thanks to the well
understood theory of Orlicz spaces (see \cite{Rao1991} again) as
well as simple considerations:
\begin{thm}
\label{thm:Orlicz}The following holds:
\begin{itemize}
\item $L^{\Phi}\left(\mu\right)$ and $L^{\Psi}\left(\mu\right)$ are Orlicz
vector spaces.
\item $N_{\Phi}$ and $\left\Vert \cdot\right\Vert _{\Phi}$ are both norms
on $L^{\Phi}\left(\mu\right)$ ($N_{\Phi}$ is called the \emph{gauge}
or \emph{Luxemburg norm} and $\left\Vert \cdot\right\Vert _{\Phi}$
the \emph{Orlicz norm}).
\item For all $f\in L^{\Phi}\left(\mu\right)$,
\begin{equation}
N_{\Phi}\left(f\right)\le\left\Vert f\right\Vert _{\Phi}\le2N_{\Phi}\left(f\right).\label{eq:gauge-Orlicz-norms}
\end{equation}
\item $\left(L^{\Phi}\left(\mu\right),N_{\Phi}\right)$ is a non-reflexive
separable Banach space.
\item $L^{1}\left(\mu\right)\subset L^{\Phi}\left(\mu\right)$ and $\overline{L_{0}^{1}\left(\mu\right)}=\overline{L^{1}\left(\mu\right)}=L^{\Phi}\left(\mu\right)$.
\item $L^{2}\left(\mu\right)\subset L^{\Phi}\left(\mu\right)$ and $\overline{L^{2}\left(\mu\right)}=L^{\Phi}\left(\mu\right)$.
\item $L^{\Psi}\left(\mu\right)=L^{2}\left(\mu\right)\cap L^{\infty}\left(\mu\right).$
\item $N_{\Psi}=\max\left(\left\Vert \cdot\right\Vert _{2},\frac{1}{2}\left\Vert \cdot\right\Vert _{\infty}\right).$
\item For all $f\in L^{\Phi}\left(\mu\right)$, $\left\Vert f\right\Vert _{\Phi}:=\sup\left\{ \int_{X}\left|fg\right|d\mu,\:N_{\Psi}\left(g\right)\le1\right\} $.
\item $\left(L^{\Psi}\left(\mu\right),N_{\Psi}\right)$ is (a representation
of) the topological dual $\left(L^{\Phi}\left(\mu\right)^{\prime},\left\Vert \cdot\right\Vert _{\Phi}^{\prime}\right)$
of $\left(L^{\Phi}\left(\mu\right),\left\Vert \cdot\right\Vert _{\Phi}\right)$.
\item If $\left\{ f_{n}\right\} _{n\in\mathbb{N}}$ is a sequence of elements
of $L^{\Phi}\left(\mu\right)$ converging to $f\in L^{\Phi}\left(\mu\right)$
with respect to $\left\Vert \cdot\right\Vert _{\Phi}$, then there
exists an $\mu$-a.e. converging subsequence.
\end{itemize}
\end{thm}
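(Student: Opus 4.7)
The statement is a packaging of standard Orlicz facts for the specific pair $(\Phi,\Psi)$, so my plan is to invoke the general theory (e.g.~\cite{Rao1991}, Chapters I--IV) after verifying by hand the one or two computations that are particular to our $\Phi$. Concretely, I would first check that $\Phi$ is a finite, continuous, strictly increasing Young function, and verify the $\Delta_{2}$ inequality $\Phi(2x)\le4\Phi(x)$ by a three-case split ($x\le1/2$, $1/2<x\le1$, $x>1$). Once that is done, the textbook theory of $\Delta_{2}$-regular Young functions hands over for free: $L^{\Phi}(\mu)$ is a vector space, both $N_{\Phi}$ and $\left\Vert \cdot\right\Vert _{\Phi}$ are norms, the sandwich inequality \eqref{eq:gauge-Orlicz-norms} holds, $(L^{\Phi}(\mu),N_{\Phi})$ is a separable Banach space, and non-reflexivity comes from the complementary function $\Psi$ failing the $\Delta_{2}$ condition (which will be visible once we compute it).

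The single computation particular to our setup is the Legendre transform $\Psi(y)=\sup_{x\ge0}\bigl\{xy-\Phi(x)\bigr\}$. Since $\Phi'\equiv2$ on $(1,\infty)$, the supremum is $+\infty$ as soon as $y>2$, which already forces any $g\in L^{\Psi}(\mu)$ to be essentially bounded (after scaling). For $y\le2$, the optimum is attained at the interior critical point $x=y/2\in[0,1]$, yielding a quadratic expression in $y$. Reading off $\int_{X}\Psi(|g|)\,d\mu<\infty$ then characterizes $L^{\Psi}(\mu)$ as exactly those $g$ that are simultaneously in $L^{2}(\mu)$ and in $L^{\infty}(\mu)$ (with $\|g\|_{\infty}\le2$ being the endpoint), and a direct analysis of the Luxemburg infimum produces the formula $N_{\Psi}=\max(\left\Vert \cdot\right\Vert _{2},\frac{1}{2}\left\Vert \cdot\right\Vert _{\infty})$.

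For the density statements, the inclusion $L^{1}(\mu)\subset L^{\Phi}(\mu)$ follows from $\Phi(x)\le\max(x,x^{2})\le x^{2}$ for $x\le1$ and $\Phi(x)\le2x$ for $x\ge1$; density is then obtained by approximating $f\in L^{\Phi}(\mu)$ by the truncations $f_{n}:=f\cdot\mathbf{1}_{\{|f|\le n\}\cap K_{n}}$ for an exhaustion $K_{n}\uparrow X$ by sets of finite measure, and using the $\Delta_{2}$-equivalence between modular convergence $\int\Phi(|f-f_{n}|)\,d\mu\to0$ and norm convergence. The same approximation works for $L^{2}(\mu)$. The duality $(L^{\Phi}(\mu))'\simeq(L^{\Psi}(\mu),N_{\Psi})$ and the defining formula for $\left\Vert f\right\Vert _{\Phi}$ as a supremum against $N_{\Psi}$-unit-ball elements are then exactly the standard duality theorem for $\Delta_{2}$-regular Orlicz pairs, applied to our computed $\Psi$. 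Finally, the extraction of an $\mu$-a.e.~convergent subsequence from an $N_{\Phi}$-convergent sequence follows by a Markov-type inequality $\mu\{|f_{n}-f|>\varepsilon\}\le\Phi(\varepsilon)^{-1}\int\Phi(|f_{n}-f|)\,d\mu$ combined once again with the modular-versus-norm equivalence.

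The only step with any genuine content beyond bookkeeping is the passage between convergence in $N_{\Phi}$ and modular convergence $\int\Phi(|f_{n}-f|)\,d\mu\to0$, which underlies both the density argument and the a.e.~subsequence extraction; this is precisely where $\Delta_{2}$-regularity of $\Phi$ is used, and it is a standard fact from the Orlicz literature. All other assertions then reduce to direct verifications on the explicit formulas for $\Phi$ and $\Psi$, so I expect no conceptual obstacle.
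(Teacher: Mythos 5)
Your proposal is correct and follows essentially the same route as the paper: the paper offers no proof of this theorem at all, simply collecting the assertions from the standard theory of $\Delta_{2}$-regular Young functions in Rao--Ren, and your plan (verify that $\Phi$ is a $\Delta_{2}$ Young function, compute the complementary function, and then quote the general machinery for the norms, density, duality, non-reflexivity and a.e.\ subsequence extraction) is exactly that citation made explicit. One point to watch: the critical-point evaluation you describe gives $\sup_{x}\{xy-\Phi(x)\}=y^{2}/4$ for $y\in[0,2]$, not the $y^{2}$ recorded in the paper, and carrying that value through the Luxemburg infimum yields $N_{\Psi}=\tfrac{1}{2}\max\left(\left\Vert \cdot\right\Vert _{2},\left\Vert \cdot\right\Vert _{\infty}\right)$ rather than $\max\left(\left\Vert \cdot\right\Vert _{2},\tfrac{1}{2}\left\Vert \cdot\right\Vert _{\infty}\right)$; so as written your sketch asserts a formula your own computation would not produce. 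This is only a discrepancy in constants (all the norms in play remain equivalent, and every qualitative item of the theorem survives), but you should fix one normalization of $\Psi$ and track it consistently through the $N_{\Psi}$ and $\left\Vert \cdot\right\Vert _{\Phi}$ formulas.
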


\subsection{The Poisson-Orlicz norm and Urbanik's theorem}

Set $\mathcal{I}_{1}:=\mathcal{I}\cap L_{0}^{1}\left(\mu^{*}\right)$.
$I_{1}$ is an onto linear map between $L^{\Phi}\left(\mu\right)$
and $\mathcal{I}_{1}$, it is also one-to-one thanks to Remark \ref{rem:ID Random variable},
for example. This leads naturally to the definition:
\begin{defn}
For all $f\in L^{\Phi}\left(\mu\right)$,
\[
\left\Vert f\right\Vert _{*}:=\left\Vert I_{1}\left(f\right)\right\Vert _{L^{1}\left(\mu^{*}\right)},
\]

defines a norm. We call it the \emph{Poisson-Orlicz norm} of $L^{\Phi}\left(\mu\right)$.
\end{defn}

As such, $\left(\mathcal{I}_{1},\left\Vert \cdot\right\Vert _{L^{1}\left(\mu^{*}\right)}\right)$
is isometric to $\left(L^{\Phi}\left(\mu\right),\left\Vert \cdot\right\Vert _{*}\right)$
.

The natural question is therefore to compare this norm with $N_{\Phi}$
and $\left\Vert \cdot\right\Vert _{\Phi}$. The answer has been given
by Urbanik \cite{Urbanik1967}:
\begin{thm}
\label{thm:Urbanik}The norms $\left\Vert \cdot\right\Vert _{*}$,
$N_{\Phi}$ and $\left\Vert \cdot\right\Vert _{\Phi}$ are equivalent.
\end{thm}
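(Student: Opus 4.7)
The plan is to reduce the claim to comparing the Poisson-Orlicz norm $\|\cdot\|_*$ with the Luxemburg gauge $N_\Phi$, since Theorem \ref{thm:Orlicz} already gives $N_\Phi\le\|\cdot\|_\Phi\le 2N_\Phi$. For the whole argument I would split any $f\in L^\Phi(\mu)$ as $f=f_1+f_2$ with $f_1:=f\mathbf{1}_{|f|\le 1}\in L^2(\mu)$ and $f_2:=f\mathbf{1}_{|f|>1}\in L^1(\mu)$, using (\ref{eq:L^1 Stoch Int}) to see both pieces are legitimate.

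For the upper bound $\|f\|_*\le C_1 N_\Phi(f)$: Cauchy-Schwarz and the $L^2$-isometry of Theorem \ref{thm:L^2 isomorphism} give $\|I_1(f_1)\|_{L^1(\mu^*)}\le\|I_1(f_1)\|_{L^2(\mu^*)}=\|f_1\|_2$, while the triangle inequality together with Proposition \ref{prop:positivity-1} gives $\|I_1(f_2)\|_{L^1(\mu^*)}\le\mathbb{E}_{\mu^*}[N(|f_2|)]+|\mu(f_2)|\le 2\|f_2\|_1$. Hence $\|f\|_*\le\|f_1\|_2+2\|f_2\|_1$. Both summands are controlled by the modular
\[
\int_X\Phi(|f|)\,d\mu=\|f_1\|_2^2+\int_X(2|f|-1)\mathbf{1}_{|f|>1}\,d\mu,
\]
because on $\{|f|>1\}$ one has $|f|\le 2|f|-1=\Phi(|f|)$ and $\mathbf{1}_{|f|>1}\le\Phi(|f|)$. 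Applying this to $f/N_\Phi(f)$ (whose modular is $\le 1$ by the definition of $N_\Phi$) yields $\|f/N_\Phi(f)\|_*\le 3$, i.e.\ $\|f\|_*\le 3\,N_\Phi(f)$.

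For the reverse inequality $N_\Phi(f)\le C_2\|f\|_*$, the key step is to prove that $\mathcal{I}_1$ is a \emph{closed} subspace of $L^1(\mu^*)$. If $\{I_1(f_n)\}$ is Cauchy in $L^1(\mu^*)$ with limit $F$, then $I_1(f_n)\to F$ in $\mu^*$-measure, so Proposition \ref{prop:closnessofI} yields $F=I(g)+c$ for some $g\in L^\chi(\mu)$ and $c\in\mathbb{C}$. Since $F\in L^1(\mu^*)$ and $\mathbb{E}_{\mu^*}[F]=\lim_n\mathbb{E}_{\mu^*}[I_1(f_n)]=0$, the equivalences (\ref{eq:L^1 Stoch Int}) and (\ref{eq:centered stoch int}) force $g\in L^\Phi(\mu)$ and $F=I_1(g)\in\mathcal{I}_1$. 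Consequently $(\mathcal{I}_1,\|\cdot\|_{L^1(\mu^*)})$ is a Banach space, and the linear bijection $I_1\colon(L^\Phi(\mu),N_\Phi)\to(\mathcal{I}_1,\|\cdot\|_{L^1(\mu^*)})$ is continuous by the upper bound just obtained. The open mapping theorem then delivers continuity of $I_1^{-1}$, which is exactly the sought inequality.

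The main obstacle I anticipate is the closedness step: one must make sure the limit function $g$ provided by Proposition \ref{prop:closnessofI} indeed sits in $L^\Phi(\mu)$, not merely in $L^\chi(\mu)$, and that the constant $c$ is pinned down so that $F=I_1(g)$ and no loose additive constant remains. Both issues are settled by reading (\ref{eq:L^1 Stoch Int}) and (\ref{eq:centered stoch int}) together with the centering condition $\mathbb{E}_{\mu^*}[F]=0$ inherited from the Cauchy sequence; once this is done the open mapping theorem turns all three norms into equivalent Banach norms on $L^\Phi(\mu)$.
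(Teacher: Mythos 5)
Your proof is correct, and its skeleton is the same as the paper's: establish one-sided continuity of $I_{1}$, show that $\mathcal{I}_{1}$ is closed in $L^{1}\left(\mu^{*}\right)$ so that $\left(L^{\Phi}\left(\mu\right),\left\Vert \cdot\right\Vert _{*}\right)$ is a Banach space, and conclude by the open mapping theorem. Where you differ is in the upper bound: the paper routes it through a duality argument (identifying the dual of $\left(L^{\Phi}\left(\mu\right),\left\Vert \cdot\right\Vert _{*}\right)$ inside $L^{2}\left(\mu\right)\cap L^{\infty}\left(\mu\right)$ and comparing dual norms) to obtain the sharp inequality $\left\Vert f\right\Vert _{*}\le\left\Vert f\right\Vert _{\Phi}$, which it then shows is optimal; you instead split $f=f1_{\left|f\right|\le1}+f1_{\left|f\right|>1}$, use the elementary bounds $\left\Vert \cdot\right\Vert _{*}\le\left\Vert \cdot\right\Vert _{2}$ on $L^{2}\left(\mu\right)$ and $\left\Vert \cdot\right\Vert _{*}\le2\left\Vert \cdot\right\Vert _{1}$ on $L^{1}\left(\mu\right)$ (both of which the paper also proves), and control both pieces by the modular of $f/N_{\Phi}\left(f\right)$, obtaining the explicit but non-optimal constant $\left\Vert f\right\Vert _{*}\le3N_{\Phi}\left(f\right)$. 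Your version is more self-contained and avoids the dual-space machinery, at the cost of sharpness. You also spell out a point the paper leaves implicit in the closedness step: that the limit of an $L^{1}\left(\mu^{*}\right)$-Cauchy sequence in $\mathcal{I}_{1}$ lands in $\mathcal{I}_{1}$ itself and not merely in $\mathcal{I}_{\mu^{*}}$, which requires invoking (\ref{eq:L^1 Stoch Int}) to place $g$ in $L^{\Phi}\left(\mu\right)$ and the centering $\mathbb{E}_{\mu^{*}}\left[F\right]=0$ to eliminate the additive constant; this is a genuine (if small) gap in the paper's two-line proof that your write-up fills.
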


In \cite{Marcus2001}, explicit bounds were obtained:
\begin{equation}
\left(0.125\right)N_{\Phi}\left(f\right)\le\left\Vert f\right\Vert _{*}\le\left(2.125\right)N_{\Phi}\left(f\right),\label{eq:Rosinski}
\end{equation}

which gives another proof of Urbanik's result. The purpose of the
authors in \cite{Marcus2001} was to a get good approximations of
the $L^{1}$-norm of integrable infinitely divisible random vectors
(which can always been obtained as a particular (multidimensional)
stochastic integral).

For the sake of completeness and in an effort to better understand
this space, we will give our own short proof of Theorem \ref{thm:Urbanik}.
This will allow us to get an optimal upper bound with respect to $\left\Vert \cdot\right\Vert _{\Phi}$
(and thus improve the right-hand part of Eq (\ref{eq:Rosinski}):
$\left\Vert f\right\Vert _{*}\le2N_{\Phi}\left(f\right)$).

Let us see how $\left\Vert \cdot\right\Vert _{*}$ behaves:
\begin{prop}
\label{prop:obsPoisson}We have
\begin{itemize}
\item For any $f\in L^{1}\left(\mu\right)$, 
\[
\left\Vert f\right\Vert _{*}\le2\left\Vert f\right\Vert _{1}.
\]
\item For any $f\in L^{2}\left(\mu\right)$,
\[
\left\Vert f\right\Vert _{*}\le\left\Vert f\right\Vert _{2}.
\]
\item For any $f\in L^{\Phi}\left(\mu\right)$,
\[
\left\Vert f\right\Vert _{*}\le\left\Vert f\right\Vert _{\Phi}.
\]
\end{itemize}
Moreover, $\left\Vert \cdot\right\Vert _{*}$ and $\left\Vert \cdot\right\Vert _{\Phi}$
are different and the bound is optimal.
\end{prop}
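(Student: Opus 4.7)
The plan is to establish the three inequalities in order, with the Orlicz bound being the main substantial step, and then to examine sharpness.

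The $L^{1}$-bound is immediate from the triangle inequality applied to $I_{1}(f)=N(f)-\mu(f)$ together with the identity $\mathbb{E}_{\mu^{*}}[N(|f|)]=\mu(|f|)$:
\[
\|I_{1}(f)\|_{L^{1}(\mu^{*})}\le\mathbb{E}_{\mu^{*}}[N(|f|)]+|\mu(f)|\le2\|f\|_{1}.
\]
The $L^{2}$-bound is Jensen's inequality on the probability space $(X^{*},\mathcal{A}^{*},\mu^{*})$ combined with the isometry of Theorem \ref{thm:L^2 isomorphism}: $\|I_{1}(f)\|_{L^{1}(\mu^{*})}\le\|I_{1}(f)\|_{L^{2}(\mu^{*})}=\|f\|_{2}$.

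The main obstacle is the Orlicz bound $\|f\|_{*}\le\|f\|_{\Phi}$, which I would attack by a Poisson--Malliavin duality argument. It suffices to prove the bound for $f\in L^{1}(\mu)\cap L^{2}(\mu)$; the extension to general $f\in L^{\Phi}(\mu)$ then follows by a truncation $f_{n}:=f\cdot 1_{|f|\le n}\cdot 1_{A_{n}}$ with $A_{n}\uparrow X$ of finite measure, using $|f_{n}|\le|f|$ (so $\|f_{n}\|_{\Phi}\le\|f\|_{\Phi}$), convergence in $\mu^{*}$-probability of $I_{1}(f_{n})$ to $I_{1}(f)$, and Fatou's lemma along an almost-everywhere convergent subsequence. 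For $f\in L^{1}\cap L^{2}$, the $L^{1}$--$L^{\infty}$ duality on $(X^{*},\mathcal{A}^{*},\mu^{*})$ gives
\[
\|I_{1}(f)\|_{L^{1}(\mu^{*})}=\sup\bigl\{\mathbb{E}_{\mu^{*}}[I_{1}(f)G]\,:\,G\in L^{\infty}(\mu^{*}),\ \|G\|_{\infty}\le1\bigr\}.
\]
Since $I_{1}(f)\in\mathcal{I}_{2}$ and $P$ denotes the orthogonal projection onto $\mathcal{I}_{2}$ (Lemma \ref{lem:projection}), one has $\mathbb{E}_{\mu^{*}}[I_{1}(f)G]=\mathbb{E}_{\mu^{*}}[I_{1}(f)\,PG]$, and by Theorem \ref{thm:L^2 isomorphism} there is a unique $g_{G}\in L^{2}(\mu)$ with $PG=I_{1}(g_{G})$, yielding $\mathbb{E}_{\mu^{*}}[I_{1}(f)G]=\int_{X}fg_{G}\,d\mu$. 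The key identification is $g_{G}(x)=\mathbb{E}_{\mu^{*}}[D_{x}G]$ $\mu$-a.e., obtained by applying the Mecke formula to $\mathbb{E}_{\mu^{*}}[G\,I_{1}(h)]$ for arbitrary $h\in L^{1}\cap L^{2}$ and comparing the resulting expression with $\langle g_{G},h\rangle_{L^{2}(\mu)}$. The pointwise bound $|D_{x}G|=|G(\omega+\delta_{x})-G(\omega)|\le2\|G\|_{\infty}$ then gives $\|g_{G}\|_{\infty}\le2$, while the $L^{2}$-isometry gives $\|g_{G}\|_{2}=\|PG\|_{L^{2}(\mu^{*})}\le\|G\|_{L^{2}(\mu^{*})}\le1$. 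Hence $N_{\Psi}(g_{G})=\max(\|g_{G}\|_{2},\tfrac{1}{2}\|g_{G}\|_{\infty})\le1$, and the dual formula for $\|\cdot\|_{\Phi}$ in Theorem \ref{thm:Orlicz} gives $\int_{X}fg_{G}\,d\mu\le\|f\|_{\Phi}$, as required.

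For the remaining two claims (inequivalence of the norms and optimality of the constant $1$), I would test on indicator functions $f=1_{A}$ with $\mu(A)=t\in(0,1/4]$. A direct calculation with the dual formula (restricted to $g$ supported on $A$, where Cauchy--Schwarz and the $L^{\infty}$-constraint give $\int_{A}|g|\,d\mu\le\min(\sqrt{t}\,\|g\|_{2},2t\|g\|_{\infty}/2)$, achieved by $g=2\cdot 1_{A}$) gives $\|1_{A}\|_{\Phi}=\min(2t,\sqrt{t})=2t$ on this range, while a closed-form computation from the Poisson law of parameter $t$ yields $\|1_{A}\|_{*}=\mathbb{E}|N(A)-t|=2te^{-t}$. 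Thus $\|1_{A}\|_{*}=2te^{-t}<2t=\|1_{A}\|_{\Phi}$ for every $t>0$, which already implies that the two norms genuinely differ, and $\|1_{A}\|_{*}/\|1_{A}\|_{\Phi}=e^{-t}\to 1$ as $t\to 0^{+}$, showing that the constant $1$ in the Orlicz bound cannot be improved.
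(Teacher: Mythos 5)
Your proof is correct, and for the two substantive parts it takes a genuinely different route from the paper. For the key inequality $\left\Vert f\right\Vert _{*}\le\left\Vert f\right\Vert _{\Phi}$, the paper stays on the base space: it takes an arbitrary $\phi$ in the dual of $\left(L^{\Phi}\left(\mu\right),\left\Vert \cdot\right\Vert _{*}\right)$, uses the already-proved bounds $\left\Vert f\right\Vert _{*}\le2\left\Vert f\right\Vert _{1}$ and $\left\Vert f\right\Vert _{*}\le\left\Vert f\right\Vert _{2}$ to represent $\phi$ by some $g\in L^{2}\left(\mu\right)\cap L^{\infty}\left(\mu\right)$ with $N_{\Psi}\left(g\right)\le\left\Vert \phi\right\Vert _{*}^{\prime}$, and then compares the two dual-norm formulas. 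You instead work concretely on the Poisson space via $L^{1}$--$L^{\infty}$ duality, project the test function $G$ onto the first chaos, and identify $\pi\left(G\right)=\mathbb{E}_{\mu^{*}}\left[D_{\cdot}G\right]$ through the Mecke formula; the bounds $\left\Vert g_{G}\right\Vert _{\infty}\le2$ and $\left\Vert g_{G}\right\Vert _{2}\le1$ then come from the pointwise estimate on the difference operator and the contractivity of the projection, rather than from the two bullet inequalities. This is a valid and arguably more illuminating derivation of why exactly $\max\left(\left\Vert g\right\Vert _{2},\frac{1}{2}\left\Vert g\right\Vert _{\infty}\right)$ appears, at the cost of invoking the chaos machinery (Lemma \ref{lem:projection}, Proposition \ref{prop:LastPen}) where the paper's argument is self-contained given the first two bullets; note that your pointwise bound $\left|D_{x}G\right|\le2\left\Vert G\right\Vert _{\infty}$ tacitly uses that the law of $G_{\delta}$ under (a measure equivalent to) $\mu\otimes\mu^{*}$ is that of $G$ under $\mu^{*}$, i.e.\ Remark \ref{rem:abscont}, which is worth citing. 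For the sharpness, your single indicator $1_{A}$ with $\mu\left(A\right)=t\le\frac{1}{4}$, giving $\left\Vert 1_{A}\right\Vert _{*}=2te^{-t}<2t=\left\Vert 1_{A}\right\Vert _{\Phi}$, is cleaner and more direct than the paper's example $f_{n}=1_{A_{n}}-1_{B_{n}}$, which establishes that the norms differ indirectly (via $\left\Vert \left|f_{n}\right|\right\Vert _{*}\neq\left\Vert f_{n}\right\Vert _{*}$ while $\left\Vert \left|f_{n}\right|\right\Vert _{\Phi}=\left\Vert f_{n}\right\Vert _{\Phi}$) and only bounds $\left\Vert \left|f_{n}\right|\right\Vert _{\Phi}$ from above; both yield the same ratio $e^{-t}\to1$ for optimality.
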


\begin{proof}
If $f\in L^{1}\left(\mu\right)$, then
\begin{align*}
\left\Vert f\right\Vert _{*} & =\left\Vert I_{1}\left(f\right)\right\Vert _{L^{1}\left(\mu^{*}\right)}\\
 & =\mathbb{E}_{\mu^{*}}\left[\left|I_{1}\left(f\right)\right|\right]\\
 & =\mathbb{E}_{\mu^{*}}\left[\left|N\left(f\right)-\mu\left(f\right)\right|\right]\\
 & \le\mathbb{E}_{\mu^{*}}\left[\left|N\left(f\right)\right|\right]+\left|\mu\left(f\right)\right|\\
 & \le\mathbb{E}_{\mu^{*}}\left[N\left(\left|f\right|\right)\right]+\mu\left(\left|f\right|\right)\\
 & =2\left\Vert f\right\Vert _{1}.
\end{align*}

And if $f\in L^{2}\left(\mu\right)$,
\begin{align*}
\left\Vert f\right\Vert _{*} & =\left\Vert I_{1}\left(f\right)\right\Vert _{L^{1}\left(\mu^{*}\right)}\\
 & \le\left\Vert I_{1}\left(f\right)\right\Vert _{L^{2}\left(\mu^{*}\right)}\\
 & =\left\Vert f\right\Vert _{2},
\end{align*}

thanks to Theorem \ref{thm:L^2 isomorphism}.

Denote by $\left(L^{\Phi}\left(\mu\right)^{\prime*},\left\Vert \cdot\right\Vert _{*}^{\prime}\right)$
the topological dual of $\left(L^{\Phi}\left(\mu\right),\left\Vert \cdot\right\Vert _{*}\right)$
and let $\phi\in L^{\Phi}\left(\mu\right)^{\prime*}$. From the beginning
of the proof, for any $f\in L^{1}\left(\mu\right)$:
\begin{equation}
\left|\phi\left(f\right)\right|\le\left\Vert \phi\right\Vert _{*}^{\prime}\left\Vert f\right\Vert _{*}\le2\left\Vert \phi\right\Vert _{*}^{\prime}\left\Vert f\right\Vert _{1},\label{eq:L^1Phi}
\end{equation}

and for any $f\in L^{2}\left(\mu\right)$:
\begin{equation}
\left|\phi\left(f\right)\right|\le\left\Vert \phi\right\Vert _{*}^{\prime}\left\Vert f\right\Vert _{*}\le\left\Vert \phi\right\Vert _{*}^{\prime}\left\Vert f\right\Vert _{2}.\label{eq:L^2Phi}
\end{equation}

Thus $\phi$ is a continuous linear form on both $\left(L^{1}\left(\mu\right),\left\Vert \cdot\right\Vert _{1}\right)$
and $\left(L^{2}\left(\mu\right),\left\Vert \cdot\right\Vert _{2}\right)$,
and there exists a unique $g_{1}\in L^{\infty}\left(\mu\right)$ such
that, for any $f\in L^{1}\left(\mu\right)$:

\[
\phi\left(f\right)=\int_{X}g_{1}fd\mu,
\]

and a unique $g_{2}\in L^{2}\left(\mu\right)$ such that, for any
$f\in L^{2}\left(\mu\right)$:

\[
\phi\left(f\right)=\int_{X}g_{2}fd\mu.
\]

Of course $g_{1}=g_{2}$ $\mu$-a.e..

Let $g:=g_{1}\in L^{2}\left(\mu\right)\cap L^{\infty}\left(\mu\right)$
and take $f$ in $L^{\Phi}\left(\mu\right)$. We have $f=f1_{\left|f\right|\le1}+f1_{\left|f\right|>1}$
with $f1_{\left|f\right|\le1}\in L^{2}\left(\mu\right)$ and $f1_{\left|f\right|>1}\in L^{1}\left(\mu\right)$
therefore
\begin{align*}
\phi\left(f\right) & =\phi\left(f1_{\left|f\right|\le1}\right)+\phi\left(f1_{\left|f\right|>1}\right)\\
 & =\int_{X}gf1_{\left|f\right|\le1}d\mu+\int_{X}gf1_{\left|f\right|>1}d\mu\\
 & =\int_{X}gfd\mu.
\end{align*}

In particular, $\phi\in L^{\Phi}\left(\mu\right)^{\prime}$, which
yields $L^{\Phi}\left(\mu\right)^{\prime*}\subset L^{\Phi}\left(\mu\right)^{\prime}\simeq L^{2}\left(\mu\right)\cap L^{\infty}\left(\mu\right)$.
From now on, we identify $L^{\Phi}\left(\mu\right)^{\prime*}$ as
a subspace of $L^{2}\left(\mu\right)\cap L^{\infty}\left(\mu\right)\simeq L^{\Phi}\left(\mu\right)^{\prime}$.

From the inequalities (\ref{eq:L^1Phi}) and (\ref{eq:L^2Phi}), for
any $g\in L^{\Phi}\left(\mu\right)^{\prime*}$,
\[
\max\left(\left\Vert g\right\Vert _{2},\frac{1}{2}\left\Vert g\right\Vert _{\infty}\right)\le\left\Vert g\right\Vert _{*}^{\prime},
\]

thus, for any $g\in L^{\Phi}\left(\mu\right)^{\prime*}$,
\[
N_{\Psi}\left(g\right)\le\left\Vert g\right\Vert _{*}^{\prime}.
\]

Hence for any $f\in L^{\Phi}\left(\mu\right)$:
\begin{align*}
\left\Vert f\right\Vert _{*} & =\sup_{g\in L^{\Phi}\left(\mu\right)^{\prime*}}\frac{\int_{X}gfd\mu}{\left\Vert g\right\Vert _{*}^{\prime}}\\
 & \le\sup_{g\in L^{\Phi}\left(\mu\right)^{\prime*}}\frac{\int_{X}gfd\mu}{N_{\Psi}\left(g\right)}\\
 & \le\sup_{g\in L^{2}\left(\mu\right)\cap L^{\infty}\left(\mu\right)}\frac{\int_{X}gfd\mu}{N_{\Psi}\left(g\right)}\\
 & =\left\Vert f\right\Vert _{\Phi}.
\end{align*}

Now observe that for any $f\in L^{\Phi}\left(\mu\right)$, we have
\[
\left\Vert \left|f\right|\right\Vert _{\Phi}=\left\Vert f\right\Vert _{\Phi}.
\]

This is not the case with $\left\Vert \cdot\right\Vert _{*}$. To
see this, let $A_{n}$ and $B_{n}$, $n\ge2$ be measurable sets such
that, for all $n\ge2$, $A_{n}\cap B_{n}=\emptyset$ and $\mu\left(A_{n}\right)=\mu\left(B_{n}\right)=\frac{1}{2n}$,
and set $f_{n}:=1_{A_{n}}-1_{B_{n}}$, so that $f_{n}\in L_{0}^{1}\left(\mu\right)$
and $\left\Vert f_{n}\right\Vert _{1}=\frac{1}{n}$.

We have:

\begin{align*}
\left\Vert \left|f_{n}\right|\right\Vert _{*} & =\left\Vert \left|1_{A_{n}\cup B_{n}}\right|\right\Vert _{*}\\
 & =\mathbb{E}_{\mu^{*}}\left[\left|N\left(A_{n}\cup B_{n}\right)-\mu\left(A_{n}\cup B_{n}\right)\right|\right]\\
 & =\mathbb{E}_{\mu^{*}}\left[\mu\left(A_{n}\cup B_{n}\right)1_{N\left(A_{n}\cup B_{n}\right)=0}\right]\\
 & +\mathbb{E}_{\mu^{*}}\left[\left(N\left(A_{n}\cup B_{n}\right)-\mu\left(A_{n}\cup B_{n}\right)\right)1_{N\left(A_{n}\cup B_{n}\right)\ge1}\right]\\
 & =\frac{1}{n}e^{-\frac{1}{n}}+\mathbb{E}_{\mu^{*}}\left[N\left(A_{n}\cup B_{n}\right)-\mu\left(A_{n}\cup B_{n}\right)\right]\\
 & -\mathbb{E}_{\mu^{*}}\left[\left(N\left(A_{n}\cup B_{n}\right)-\mu\left(A_{n}\cup B_{n}\right)\right)1_{N\left(A_{n}\cup B_{n}\right)=0}\right]\\
 & =\frac{2}{n}e^{-\frac{1}{n}},
\end{align*}

whereas
\begin{align*}
\left\Vert f_{n}\right\Vert _{*} & =\mathbb{E}_{\mu^{*}}\left[\left|I_{1}\left(f_{n}\right)\right|\right]\\
 & =\mathbb{E}_{\mu^{*}}\left[\left|N\left(f_{n}\right)-\mu\left(f_{n}\right)\right|\right]\\
 & =\mathbb{E}_{\mu^{*}}\left[\left|N\left(f_{n}\right)\right|\right]\\
 & \le\mathbb{E}_{\mu^{*}}\left[N\left(\left|f_{n}\right|\right)\right]\\
 & =\left\Vert f_{n}\right\Vert _{1}=\frac{1}{n}.
\end{align*}

To prove the optimality, observe that, since for all $g\in L^{2}\left(\mu\right)\cap L^{\infty}\left(\mu\right)$,
$N_{\Psi}\left(g\right)=\max\left(\left\Vert g\right\Vert _{2},\frac{1}{2}\left\Vert g\right\Vert _{\infty}\right)\ge\frac{1}{2}\left\Vert g\right\Vert _{\infty}$,
we have, for any $f\in L^{1}\left(\mu\right)$:
\begin{align*}
\text{\ensuremath{\left\Vert f\right\Vert _{\Phi}}} & =\sup_{g\in L^{2}\left(\mu\right)\cap L^{\infty}\left(\mu\right)}\frac{\int_{X}gfd\mu}{N_{\Psi}\left(g\right)}\\
 & \le2\sup_{g\in L^{2}\left(\mu\right)\cap L^{\infty}\left(\mu\right)}\frac{\int_{X}gfd\mu}{\left\Vert g\right\Vert _{\infty}}\\
 & \le2\sup_{g\in L^{\infty}\left(\mu\right)}\frac{\int_{X}gfd\mu}{\left\Vert g\right\Vert _{\infty}}\\
 & =2\text{\ensuremath{\left\Vert f\right\Vert _{1}}}.
\end{align*}

Thus $\text{\ensuremath{\left\Vert \left|f_{n}\right|\right\Vert _{\Phi}}\ensuremath{\ensuremath{\le\frac{2}{n}}}}$.
\end{proof}
We have just seen that, in general, $\left\Vert f\right\Vert _{*}=\left\Vert \left|f\right|\right\Vert _{*}$
doesn't hold. Nevertheless we can compare the two quantities:
\begin{prop}
Let $f\in L^{\Phi}\left(\mu\right)$, real valued. Then
\[
\left\Vert f\right\Vert _{*}\le2\left\Vert \left|f\right|\right\Vert _{*},
\]

and 
\[
\left\Vert \left|f\right|\right\Vert _{*}\le2\left\Vert f\right\Vert _{*}.
\]
\end{prop}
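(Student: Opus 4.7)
The plan is to split $f$ into its positive and negative parts and exploit the fact that stochastic integrals carried by disjoint sets are independent.

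Write $f = f^{+} - f^{-}$ with $f^{\pm} := \max(\pm f, 0)\ge 0$, supported on the disjoint sets $X^{\pm} := \{\pm f > 0\}$. Since $\Phi$ is increasing, both $f^{\pm}$ lie in $L^{\Phi}(\mu)$. By Remark \ref{rem:Almostlinear}, $I(\alpha g + h) - \alpha I(g) - I(h)$ is only a constant, so after centering, $I_{1}$ is genuinely linear on $L^{\Phi}(\mu)$. Setting $A := I_{1}(f^{+})$ and $B := I_{1}(f^{-})$, we therefore have
\begin{equation*}
I_{1}(f) = A - B, \qquad I_{1}(|f|) = A + B.
\end{equation*}
Moreover, because $f^{+}$ and $f^{-}$ have disjoint supports, $A$ and $B$ are measurable with respect to the restrictions of the Poisson process $N$ to $X^{+}$ and $X^{-}$ respectively; these restrictions are independent Poisson processes, and hence $A$ and $B$ are independent centered integrable real random variables.

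Both inequalities therefore reduce to the following elementary claim: for any independent, centered, integrable real random variables $A$ and $B$,
\begin{equation*}
\mathbb{E}|A - B| \le 2\,\mathbb{E}|A + B| \quad \text{and} \quad \mathbb{E}|A + B| \le 2\,\mathbb{E}|A - B|.
\end{equation*}
This I would prove by conditional Jensen: $\mathbb{E}[\,|A \pm B|\,\mid A\,] \ge |A \pm \mathbb{E}B| = |A|$, hence $\mathbb{E}|A \pm B| \ge \mathbb{E}|A|$, and symmetrically $\mathbb{E}|A \pm B| \ge \mathbb{E}|B|$. The triangle inequality then gives $\mathbb{E}|A \mp B| \le \mathbb{E}|A| + \mathbb{E}|B| \le 2\,\mathbb{E}|A \pm B|$, which rephrased in terms of $\|\cdot\|_{*}$ is exactly the pair of estimates claimed.

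I do not expect a genuine obstacle. The only points that must be checked carefully are the linearity of the centered integral $I_{1}$ (as opposed to $I$, which is only linear modulo a constant) and the independence of stochastic integrals whose integrands have disjoint supports; both are immediate from the constructions recalled in Section \ref{sec:Background-on-Poisson}.
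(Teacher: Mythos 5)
Your proposal is correct and follows essentially the same route as the paper: decompose $f=f^{+}-f^{-}$, use the independence of $I_{1}(f^{+})$ and $I_{1}(f^{-})$ (disjoint supports) together with conditional Jensen to get $\mathbb{E}|A|\vee\mathbb{E}|B|\le\mathbb{E}|A\pm B|$, and conclude by the triangle inequality. The only cosmetic difference is that the paper phrases the intermediate bound as $\max(\|f^{+}\|_{*},\|f^{-}\|_{*})\le\|f\|_{*}\le 2\max(\|f^{+}\|_{*},\|f^{-}\|_{*})$ and the analogous estimate for $|f|=f^{+}+f^{-}$, which is exactly your elementary claim about independent centered integrable random variables.
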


\begin{proof}
Write $f=f^{+}-f^{-}$. Since $f^{+}$ and $f^{-}$ are supported
on disjoint sets, $I_{1}\left(f^{+}\right)$ and $I_{1}\left(f^{-}\right)$
are independent thanks to the independence properties of a Poisson
measure. Therefore, taking the conditional expectation with respect
to $\sigma\left(I_{1}\left(f^{-}\right)\right)$:
\begin{align*}
\mathbb{E}_{\mu^{*}}\left[\left|I_{1}\left(f\right)\right|\right] & =\mathbb{E}_{\mu^{*}}\left[\mathbb{E}_{\mu^{*}}\left[\left|I_{1}\left(f^{+}\right)-I_{1}\left(f^{-}\right)\right|\mid\sigma\left(I_{1}\left(f^{-}\right)\right)\right]\right]\\
 & \ge\mathbb{E}_{\mu^{*}}\left[\left|\mathbb{E}_{\mu^{*}}\left[I_{1}\left(f^{+}\right)-I_{1}\left(f^{-}\right)\mid\sigma\left(I_{1}\left(f^{-}\right)\right)\right]\right|\right]\\
 & =\mathbb{E}_{\mu^{*}}\left[\left|\mathbb{E}_{\mu^{*}}\left[I_{1}\left(f^{+}\right)\mid\sigma\left(I_{1}\left(f^{-}\right)\right)\right]-\mathbb{E}_{\mu^{*}}\left[I_{1}\left(f^{-}\right)\mid\sigma\left(I_{1}\left(f^{-}\right)\right)\right]\right|\right]\\
 & =\mathbb{E}_{\mu^{*}}\left[\left|\mathbb{E}_{\mu^{*}}\left[I_{1}\left(f^{+}\right)\right]-I_{1}\left(f^{-}\right)\right|\right]\\
 & =\mathbb{E}_{\mu^{*}}\left[\left|I_{1}\left(f^{-}\right)\right|\right].
\end{align*}

Thus $\left\Vert f\right\Vert _{*}\ge\left\Vert f^{-}\right\Vert _{*}$
and of course $\left\Vert f\right\Vert _{*}\ge\left\Vert f^{+}\right\Vert _{*}$.
This yields
\[
\max\left(\left\Vert f^{-}\right\Vert _{*},\left\Vert f^{+}\right\Vert _{*}\right)\le\left\Vert f\right\Vert _{*},
\]

and the triangular inequality implies
\[
\left\Vert f\right\Vert _{*}\le\left\Vert f^{+}\right\Vert _{*}+\left\Vert f^{-}\right\Vert _{*}\le2\max\left(\left\Vert f^{-}\right\Vert _{*},\left\Vert f^{+}\right\Vert _{*}\right),
\]

Hence
\[
\max\left(\left\Vert f^{-}\right\Vert _{*},\left\Vert f^{+}\right\Vert _{*}\right)\le\left\Vert f\right\Vert _{*}\le2\max\left(\left\Vert f^{-}\right\Vert _{*},\left\Vert f^{+}\right\Vert _{*}\right).
\]

Writing now $\left|f\right|=f^{+}+f^{-}$, the same reasoning gives
\[
\max\left(\left\Vert f^{-}\right\Vert _{*},\left\Vert f^{+}\right\Vert _{*}\right)\le\left\Vert \left|f\right|\right\Vert _{*}\le2\max\left(\left\Vert f^{-}\right\Vert _{*},\left\Vert f^{+}\right\Vert _{*}\right),
\]

and we get the result.
\end{proof}
We can now end this section with our proof of Urbanik's result, Theorem
\ref{thm:Urbanik}:
\begin{proof}
\emph{(of Theorem \ref{thm:Urbanik})} From Proposition \ref{prop:closnessofI},
it follows that $\mathcal{I}_{1}$ is closed in the Banach space $\left(L^{1}\left(\mu^{*}\right),\left\Vert \cdot\right\Vert _{L^{1}\left(\mu^{*}\right)}\right)$.
But $\left(L^{\Phi}\left(\mu\right),\left\Vert \cdot\right\Vert _{*}\right)$
is isometrically isomorphic to $\left(\mathcal{I}_{1},\left\Vert \cdot\right\Vert _{L^{1}\left(\mu^{*}\right)}\right)$
therefore $\left(L^{\Phi}\left(\mu\right),\left\Vert \cdot\right\Vert _{*}\right)$
is a Banach space. Thanks to Proposition \ref{prop:obsPoisson} and
the open map theorem applied to the identity on $L^{\Phi}\left(\mu\right)$,
the norms $\left\Vert \cdot\right\Vert _{*}$ and $\left\Vert \cdot\right\Vert _{\Phi}$
are equivalent.
\end{proof}

\section{\label{sec:-ergodic-results}$L^{\Phi}\left(\mu\right)$-ergodic
results}

\subsection{Mean ergodic theorem in $L^{\Phi}\left(\mu\right)$}

We now introduce an endomorphism $T$ on $\left(X,\mathcal{A},\mu\right)$.
The next observation follows from Remark \ref{rem:Almostlinear}.
\begin{prop}
\label{prop:Isom}$T_{*}$ satisfies, for any $f\in L^{\Phi}\left(\mu\right)$:
\[
I_{1}\left(f\right)\circ T_{*}=I_{1}\left(f\circ T\right).
\]
In particular, $T$ is an isometry of $\left(L^{\Phi}\left(\mu\right),\left\Vert \cdot\right\Vert _{*}\right)$.
\end{prop}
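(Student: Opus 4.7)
The plan is to reduce the identity to Remark \ref{rem:Almostlinear}, which already gives the uncentered version $I(f)\circ T_{*}=I(f\circ T)$ for any $f\in L^{\chi}(\mu)$, and then to transfer this to the centered stochastic integral $I_{1}$ by subtracting the appropriate expectations, using only that $T$ preserves $\mu$.

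First, I would note that for $f\in L^{\Phi}(\mu)$, the equivalence (\ref{eq:L^1 Stoch Int}) guarantees $I(f)\in L^{1}(\mu^{*})$, and from the discussion preceding (\ref{eq:centered stoch int}) one has
\[
I_{1}(f)=I(f)-\mathbb{E}_{\mu^{*}}[I(f)]=I(f)-\mu\!\left(f\,1_{|f|>1}\right).
\]
Next, since $T$ preserves $\mu$, one also has $f\circ T\in L^{\Phi}(\mu)$ (as $\int\Phi(|f\circ T|)d\mu=\int\Phi(|f|)d\mu$), and by the same formula
\[
I_{1}(f\circ T)=I(f\circ T)-\mu\!\left((f\circ T)\,1_{|f\circ T|>1}\right)=I(f\circ T)-\mu\!\left(f\,1_{|f|>1}\right).
\]
Composing the first displayed identity with $T_{*}$ and applying Remark \ref{rem:Almostlinear} gives
\[
I_{1}(f)\circ T_{*}=I(f)\circ T_{*}-\mu\!\left(f\,1_{|f|>1}\right)=I(f\circ T)-\mu\!\left(f\,1_{|f|>1}\right),
\]
which matches $I_{1}(f\circ T)$, proving the identity.

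For the ``in particular'' statement, I would simply observe that $T_{*}$ is a measure-preserving transformation of the probability space $(X^{*},\mathcal{A}^{*},\mu^{*})$, hence acts as an isometry on $L^{1}(\mu^{*})$. Combining this with the identity just established yields
\[
\|f\circ T\|_{*}=\|I_{1}(f\circ T)\|_{L^{1}(\mu^{*})}=\|I_{1}(f)\circ T_{*}\|_{L^{1}(\mu^{*})}=\|I_{1}(f)\|_{L^{1}(\mu^{*})}=\|f\|_{*}.
\]
There is no real obstacle here; the only point requiring a little care is checking that the cut-off term $\mu(f\,1_{|f|>1})$ is invariant under replacing $f$ by $f\circ T$, which is immediate from $T$-invariance of $\mu$ (applied both to the integrand and to the level set $\{|f|>1\}$).
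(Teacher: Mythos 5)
Your proof is correct and follows essentially the same route as the paper, which simply asserts that the proposition follows from Remark \ref{rem:Almostlinear} (whose computation already gives $I(f)\circ T_{*}=I(f\circ T)$, with the cut-off term handled by $T$-invariance of $\mu$ exactly as you do). Your version just makes explicit the passage from $I$ to the centered $I_{1}$ via the identity $\mathbb{E}_{\mu^{*}}[I(f)]=\mu\left(f1_{|f|>1}\right)$, which is a useful clarification but not a different argument.
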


We can state:
\begin{thm}
\label{thm:ergodicL12}The dynamical system $\left(X,\mathcal{A},\mu,T\right)$
has no absolutely continuous $T$-invariant probability measure if
and only if, for every $f\in L^{1}\left(\mu\right)$ (or $f\in L^{\Phi}\left(\mu\right)$):
\[
\frac{1}{n}\sum_{k=1}^{n}f\circ T^{k}\to_{\left\Vert \cdot\right\Vert _{*}}0,
\]

as $n$ tends to $+\infty$.
\end{thm}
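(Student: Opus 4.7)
My plan is to transfer the Birkhoff averages to the Poisson space via the isometric embedding $I_1\colon (L^\Phi(\mu),\|\cdot\|_*)\hookrightarrow (L^1(\mu^*),\|\cdot\|_{L^1(\mu^*)})$. Proposition~\ref{prop:Isom} combined with the linearity of $I_1$ on $L^\Phi(\mu)$ yields
\[
I_1\!\left(\frac{1}{n}\sum_{k=1}^n f\circ T^k\right)=\frac{1}{n}\sum_{k=1}^n I_1(f)\circ T_*^k,
\]
so that
\[
\left\|\frac{1}{n}\sum_{k=1}^n f\circ T^k\right\|_*=\left\|\frac{1}{n}\sum_{k=1}^n I_1(f)\circ T_*^k\right\|_{L^1(\mu^*)}.
\]
The question thus reduces to an $L^1$-mean ergodic question for the Koopman operator of $T_*$ on the \emph{finite} measure space $(X^*,\mathcal{A}^*,\mu^*)$.

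For the direct implication, I would assume that no absolutely continuous $T$-invariant probability measure exists. By Theorem~\ref{thm:Ergodic basics}, the Poisson suspension $T_*$ is then ergodic. Since $I_1(f)\in L^1_0(\mu^*)$ by (\ref{eq:L^1 Stoch Int}), the classical $L^1$-mean ergodic theorem for an ergodic measure-preserving transformation of a probability space applies to give
\[
\frac{1}{n}\sum_{k=1}^n I_1(f)\circ T_*^k\longrightarrow \mathbb{E}_{\mu^*}[I_1(f)]=0\qquad\text{in }L^1(\mu^*),
\]
and pulling this back through the isometry above yields the desired convergence, valid for every $f\in L^\Phi(\mu)$ and in particular for every $f\in L^1(\mu)$.

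For the converse, I would use the other formulation of Theorem~\ref{thm:Ergodic basics}: the existence of an absolutely continuous $T$-invariant probability measure is equivalent to the existence of a $T$-invariant set $A\in\mathcal{A}$ with $0<\mu(A)<+\infty$. Set $f:=1_A\in L^1(\mu)$. Then $f\circ T^k=f$ for every $k$, hence
\[
\left\|\frac{1}{n}\sum_{k=1}^n f\circ T^k\right\|_*=\|1_A\|_*=\mathbb{E}_{\mu^*}\left[|N(A)-\mu(A)|\right]>0,
\]
since $N(A)$ is a non-degenerate Poisson random variable of parameter $\mu(A)$. The convergence to $0$ therefore fails.

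The main subtlety is the direct implication, where one must invoke the $L^1$-mean ergodic theorem for the Koopman operator of an ergodic measure-preserving transformation of a probability space. That result is standard but not quite as automatic as its $L^p$, $1<p<\infty$ counterpart (which is von Neumann's theorem in a reflexive space): it follows from Birkhoff's pointwise theorem together with uniform integrability of the Birkhoff averages, the latter guaranteed by Hopf's maximal inequality on the finite measure space. The conceptual payoff of the argument is that, although $\mu$ is infinite and makes the original system ``defective'' with respect to $\|\cdot\|_1$, the Poisson-Orlicz norm realises $L^\Phi(\mu)$ inside the probability space $(X^*,\mu^*)$, where finite-measure ergodic theory applies directly.
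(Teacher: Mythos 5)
Your argument is correct and follows essentially the same route as the paper: transfer via the isometry $I_1$ and Proposition~\ref{prop:Isom} to the ergodic suspension $T_*$, apply the $L^1$-mean ergodic theorem on the probability space $\left(X^{*},\mu^{*}\right)$, and for the converse exhibit a $T$-invariant set $A\in\mathcal{A}_{f}$ whose averaged indicator has constant positive $\left\Vert \cdot\right\Vert _{*}$-norm. The only (harmless) difference is that you apply the mean ergodic theorem directly to $I_1\left(f\right)\in L_{0}^{1}\left(\mu^{*}\right)$ for arbitrary $f\in L^{\Phi}\left(\mu\right)$, whereas the paper first treats $f\in L^{1}\left(\mu\right)$ and then extends by $\left\Vert \cdot\right\Vert _{*}$-density.
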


\begin{proof}
Assume $T$ has  no absolutely continuous $T$-invariant probability
measure (in particular $\left(X^{*},\mathcal{A}^{*},\mu^{*},T_{*}\right)$
is ergodic by Theorem \ref{thm:Ergodic basics}) and let $f\in L^{1}\left(\mu\right)$:
\begin{align*}
\left\Vert \frac{1}{n}\sum_{k=1}^{n}f\circ T^{k}\right\Vert _{*} & =\left\Vert I_{1}\left(\frac{1}{n}\sum_{k=1}^{n}f\circ T^{k}\right)\right\Vert _{L^{1}\left(\mu^{*}\right)}\\
 & =\left\Vert \frac{1}{n}\sum_{k=1}^{n}I_{1}\left(f\right)\circ T_{*}^{k}\right\Vert _{L^{1}\left(\mu^{*}\right)}\\
 & \to0
\end{align*}

as $n$ tends to infinity thanks to Birkhoff's ergodic theorem.

Let now $f\in L^{1}\left(\mu\right)$ and $g\in L^{\Phi}\left(\mu\right)$:
\begin{align*}
\left\Vert \frac{1}{n}\sum_{k=1}^{n}g\circ T^{k}\right\Vert _{*} & =\left\Vert \frac{1}{n}\sum_{k=1}^{n}\left(g-f\right)\circ T^{k}+\frac{1}{n}\sum_{k=1}^{n}f\circ T^{k}\right\Vert _{*}\\
 & \le\frac{1}{n}\sum_{k=1}^{n}\left\Vert \left(g-f\right)\circ T^{k}\right\Vert _{*}+\left\Vert \frac{1}{n}\sum_{k=1}^{n}f\circ T^{k}\right\Vert _{*}\\
 & =\left\Vert g-f\right\Vert _{*}+\left\Vert \frac{1}{n}\sum_{k=1}^{n}f\circ T^{k}\right\Vert _{*}
\end{align*}

since $T$ is an isometry of $L^{\Phi}\left(\mu\right)$ (Proposition
\ref{prop:Isom}).

Thus
\[
\limsup\left\Vert \frac{1}{n}\sum_{k=1}^{n}g\circ T^{k}\right\Vert _{*}\le\left\Vert g-f\right\Vert _{*}
\]

And the result follows by density of $L^{1}\left(\mu\right)$ in $L^{\Phi}\left(\mu\right)$
(Theorem \ref{thm:Orlicz}).

Conversely, assume there exists an absolutely continuous $T$-invariant
probability measure $\nu\ll\mu$. This implies the existence of a
non-trivial $T$-invariant set $A\in\mathcal{A}_{f}$ and we get:
\[
\left\Vert \frac{1}{n}\sum_{k=1}^{n}1_{A}\circ T^{k}\right\Vert _{*}=\left\Vert 1_{A}\right\Vert _{*}>0.
\]
\end{proof}
\begin{rem}
Mean ergodic theorems are often associated to pointwise convergence.
In our context, the result is immediate, for any $f\in L^{\Phi}\left(\mu\right)$:
\[
\frac{1}{n}\sum_{k=1}^{n}f\left(T^{k}x\right)
\]

converges as $n$ tends to infinity, for $\mu$-almost every $x\in X$
since, as we saw above, we can write $f=f1_{\left|f\right|\le1}+f1_{\left|f\right|>1}$
with $f1_{\left|f\right|\le1}\in L^{2}\left(\mu\right)$ and $f1_{\left|f\right|>1}\in L^{1}\left(\mu\right)$
and the $\mu$-almost sure convergence holds for both $L^{1}\left(\mu\right)$
(\cite{Hopf1937}) and $L^{2}\left(\mu\right)$ (\cite{Akcoglu1975}).
In particular, if $T$ has  no absolutely continuous $T$-invariant
probability measure, then for $\mu$-almost every $x\in X$:
\[
\frac{1}{n}\sum_{k=1}^{n}f\left(T^{k}x\right)\to0,
\]

as $n$ tends to infinity.
\end{rem}

\subsection{Blum-Hanson type Theorem}
\begin{thm}
\label{thm:Blum-Hanson_type}The following assertions are equivalent:
\begin{enumerate}
\item $\left(X,\mathcal{A},\mu,T\right)$ is of zero type
\item $\left\{ T^{n}\right\} _{n\in\mathbb{N}}$ tends weakly to $0$ in
$L^{\Phi}\left(\mu\right)$
\item for every strictly increasing sequence $\left\{ n_{k}\right\} _{k\in\mathbb{N}}$
of integers and every $f\in L^{1}\left(\mu\right)$ (or equivalently
$f\in L^{\Phi}\left(\mu\right)$):
\[
\frac{1}{n}\sum_{k=1}^{n}f\circ T^{n_{k}}\to_{\left\Vert \cdot\right\Vert _{*}}0,
\]
\\
as $n$ tends to infinity.
\end{enumerate}
\end{thm}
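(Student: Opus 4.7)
The plan is to transfer the entire statement to the Poisson suspension $(X^*,\mathcal{A}^*,\mu^*,T_*)$ via the isometry $I_1\colon (L^\Phi(\mu),\|\cdot\|_*)\to(\mathcal{I}_1,\|\cdot\|_{L^1(\mu^*)})$ combined with the intertwining relation $I_1(f\circ T^k)=I_1(f)\circ T_*^k$ from Proposition \ref{prop:Isom}. Since zero type of $T$ is equivalent to mixing of $T_*$ by Theorem \ref{thm:Ergodic basics}, the classical Blum-Hanson theorem (\cite{Blum1960}) applied on the \emph{probability} space $(X^*,\mathcal{A}^*,\mu^*)$ with $p=1$ will handle the forward direction.

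More precisely, assume $T$ is of zero type so that $T_*$ is mixing. For $f\in L^1(\mu)$, the random variable $I_1(f)$ lies in $L^1_0(\mu^*)$, and Blum-Hanson yields
\[
\left\|\frac{1}{n}\sum_{k=1}^n I_1(f)\circ T_*^{n_k}\right\|_{L^1(\mu^*)}\longrightarrow 0.
\]
The left-hand side is exactly $\|\frac{1}{n}\sum_{k=1}^n f\circ T^{n_k}\|_*$ by Proposition \ref{prop:Isom}. The extension from $L^1(\mu)$ to $L^\Phi(\mu)$ is then a standard approximation argument: given $g\in L^\Phi(\mu)$ and $\varepsilon>0$, pick $f\in L^1(\mu)$ with $\|g-f\|_*<\varepsilon$ using the density stated in Theorem \ref{thm:Orlicz} together with Urbanik's Theorem \ref{thm:Urbanik}; the isometry of $T$ on $(L^\Phi(\mu),\|\cdot\|_*)$ and the triangle inequality then give $\limsup_n\|\frac{1}{n}\sum_{k=1}^n g\circ T^{n_k}\|_*\le\|g-f\|_*<\varepsilon$, as in the proof of Theorem \ref{thm:ergodicL12}.

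For the converse, I would argue by contraposition. If $T$ is not of zero type, there exist $A,B\in\mathcal{A}_f$ and a strictly increasing sequence $\{n_k\}$ with $\mu(A\cap T^{-n_k}B)\ge c>0$ for all $k$. Setting $f=1_B$ and testing the Orlicz norm of the Cesàro sum via the dual representation from Theorem \ref{thm:Orlicz}, I pair against $1_A/N_\Psi(1_A)$, where $N_\Psi(1_A)=\max(\sqrt{\mu(A)},1/2)$ is finite since $A\in\mathcal{A}_f$. This gives
\[
\left\|\frac{1}{n}\sum_{k=1}^n 1_B\circ T^{n_k}\right\|_\Phi\ge\frac{1}{N_\Psi(1_A)}\cdot\frac{1}{n}\sum_{k=1}^n\mu(A\cap T^{-n_k}B)\ge\frac{c}{N_\Psi(1_A)}>0.
\]
By Urbanik's Theorem \ref{thm:Urbanik}, $\|\cdot\|_*$ and $\|\cdot\|_\Phi$ are equivalent, so a comparable lower bound persists for $\|\cdot\|_*$, contradicting the assumed convergence with $f=1_B$.

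No step is particularly treacherous: the forward direction leans on Blum-Hanson in $L^1$ (explicitly stated for all $1\le p<\infty$ in the version recalled earlier in the paper), and the converse is a short duality computation once one has the dual formula for $\|\cdot\|_\Phi$ against $(L^\Psi(\mu),N_\Psi)$. The most delicate point is perhaps just to remember that Theorem \ref{thm:Ergodic basics} remains valid when $T$ is merely an endomorphism (via the natural extension, as noted in the remark following that theorem), so that the reduction to mixing of $T_*$ is legitimate in the generality we work in.
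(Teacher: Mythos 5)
Your proposal is correct and follows essentially the same route as the paper: the forward direction transfers to the suspension via the isometry $I_{1}$ and the identity $I_{1}\left(f\circ T^{k}\right)=I_{1}\left(f\right)\circ T_{*}^{k}$, invokes mixing of $T_{*}$ (Theorem \ref{thm:Ergodic basics}) and Blum--Hanson in $L^{1}\left(\mu^{*}\right)$, while the converse is the same duality pairing of the Ces\`aro averages of $1_{B}\circ T^{n_{k}}$ against $1_{A}$. The only differences are cosmetic (you add a density step in the forward direction where the paper applies Blum--Hanson directly to $I_{1}\left(g\right)$ for $g\in L^{\Phi}\left(\mu\right)$, and you phrase the converse contrapositively where the paper reduces to Ces\`aro convergence of subsequences).
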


\begin{proof}
Observe that $\left(X,\mathcal{A},\mu,T\right)$ is of zero type is
equivalent to the fact that $\left\{ T^{n}\right\} _{n\in\mathbb{N}}$
tends weakly to $0$ in $L^{2}\left(\mu\right)$. But since $L^{\Phi}\left(\mu\right)^{\prime}\simeq L^{2}\left(\mu\right)\cap L^{\infty}\left(\mu\right)$
and $\overline{L^{2}\left(\mu\right)}=L^{\Phi}\left(\mu\right)$ (Theorem
\ref{thm:Orlicz}), we easily obtain that zero type is equivalent
to the fact that $\left\{ T^{n}\right\} _{n\in\mathbb{N}}$ tends
weakly to $0$ in $L^{\Phi}\left(\mu\right)$.

If $\left(X,\mathcal{A},\mu,T\right)$ is of zero type then $\left(X^{*},\mathcal{A}^{*},\mu^{*},T_{*}\right)$
is mixing (Theorem \ref{thm:Ergodic basics}) and Blum-Hanson theorem
(\cite{Blum1960}) applies: for every strictly increasing sequence
$\left\{ n_{k}\right\} _{k\in\mathbb{N}}$ of integers, every $f\in L^{\Phi}\left(\mu\right)$,

\[
\frac{1}{n}\sum_{k=1}^{n}I_{1}\left(f\right)\circ T_{*}^{n_{k}}\to_{\left\Vert \cdot\right\Vert _{L^{1}\left(\mu^{*}\right)}}0,
\]
but
\[
\left\Vert \frac{1}{n}\sum_{k=1}^{n}I_{1}\left(f\right)\circ T_{*}^{n_{k}}\right\Vert _{L^{1}\left(\mu^{*}\right)}=\left\Vert \frac{1}{n}\sum_{k=1}^{n}f\circ T^{n_{k}}\right\Vert _{*},
\]

that is,
\[
\frac{1}{n}\sum_{k=1}^{n}f\circ T^{n_{k}}\to_{\left\Vert \cdot\right\Vert _{*}}0.
\]

Conversely, assume the convergence holds for every $f\in L^{1}\left(\mu\right)$
and every strictly increasing sequence $\left\{ n_{k}\right\} _{k\in\mathbb{N}}$
of integers and consider sets $A$ and $B$ in $\mathcal{A}_{f}$.

We want to prove $\mu\left(A\cap T^{-n}B\right)\to0$ as $n$ tends
to infinity. By a standard exercise, it is equivalent to show the
Ces\`aro convergence to zero of its subsequences i.e., for every strictly
increasing sequence $\left\{ n_{k}\right\} _{k\in\mathbb{N}}$:
\[
\frac{1}{n}\sum_{k=1}^{n}\mu\left(A\cap T^{-n_{k}}B\right)\to0,
\]

as $n$ tends to infinity, but this will indeed be the case, since
\begin{align*}
\left|\frac{1}{n}\sum_{k=1}^{n}\mu\left(A\cap T^{-n_{k}}B\right)\right| & =\left|\int_{X}1_{A}\cdot\frac{1}{n}\sum_{k=1}^{n}1_{B}\circ T^{n_{k}}d\mu\right|\\
 & \le\left\Vert 1_{A}\right\Vert _{*}^{\prime}\left\Vert \frac{1}{n}\sum_{k=1}^{n}1_{B}\circ T^{n_{k}}\right\Vert _{*}
\end{align*}

as $1_{A}\in L^{\infty}\left(\mu\right)\cap L^{2}\left(\mu\right)$
and $1_{B}\in L^{1}\left(\mu\right)\subset L^{\Phi}\left(\mu\right)$.

Hence $T$ is of zero type.
\end{proof}

\section{\label{sec:Duality-and-transfer}Duality and transfer operators}

\subsection{Duality}

It will be useful to have another representation of $\left(L^{2}\left(\mu\right)\cap L^{\infty}\left(\mu\right),\left\Vert \cdot\right\Vert _{*}^{\prime}\right)$,
the dual of $\left(L^{\Phi}\left(\mu\right),\left\Vert \cdot\right\Vert _{*}\right)$,
and that requires the introduction of a few objects.

Recall that, we denoted by $\mathcal{I}_{2}:=\left\{ I_{1}\left(f\right),\:f\in L^{2}\left(\mu\right)\right\} $
the first chaos and we pointed out the fact that $I_{1}$ is implementing
an isometric isomorphism between $\left(\mathcal{I}_{2},\left\Vert \cdot\right\Vert _{L^{2}\left(\mu^{*}\right)}\right)$
and $\left(L^{2}\left(\mu\right),\left\Vert \cdot\right\Vert _{2}\right)$.
Recall that we have introduced $P$ in Lemma \ref{lem:projection}
as the orthogonal projection on $\mathcal{I}_{2}$; we can then define
$\pi$ from $L^{2}\left(\mu^{*}\right)$ to $L^{2}\left(\mu\right)$
by
\[
\pi:=I_{1}^{-1}\circ P.
\]

Consider now the restriction of $\pi$ to $L^{\infty}\left(\mu^{*}\right)\subset L^{2}\left(\mu^{*}\right)$.
We will show:
\begin{prop}
$\pi$ is a surjective bounded operator from $\left(L^{\infty}\left(\mu^{*}\right),\left\Vert \cdot\right\Vert _{L^{\infty}\left(\mu^{*}\right)}\right)$
to $\left(L^{2}\left(\mu\right)\cap L^{\infty}\left(\mu\right),\left\Vert \cdot\right\Vert _{*}^{\prime}\right)$.
\end{prop}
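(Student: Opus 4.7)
The plan is to identify $\pi F$ with the Riesz representative of a natural continuous linear functional on $(L^{\Phi}(\mu),\|\cdot\|_{*})$. Given $F\in L^{\infty}(\mu^{*})$, consider $\phi_{F}:L^{\Phi}(\mu)\to\mathbb{C}$ defined by $\phi_{F}(f):=\mathbb{E}_{\mu^{*}}[F\cdot I_{1}(f)]$. Since $I_{1}(f)\in L^{1}(\mu^{*})$ with $\|I_{1}(f)\|_{L^{1}(\mu^{*})}=\|f\|_{*}$, the obvious bound gives $|\phi_{F}(f)|\le\|F\|_{L^{\infty}(\mu^{*})}\|f\|_{*}$, so $\phi_{F}$ is continuous of norm at most $\|F\|_{L^{\infty}(\mu^{*})}$. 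Invoking Theorem \ref{thm:Urbanik} (equivalence of $\|\cdot\|_{*}$ and $\|\cdot\|_{\Phi}$) together with the dual description provided by Theorem \ref{thm:Orlicz} and Proposition \ref{prop:obsPoisson}, we obtain a unique $g\in L^{2}(\mu)\cap L^{\infty}(\mu)$ with $\phi_{F}(f)=\int_{X}gf\,d\mu$ for every $f\in L^{\Phi}(\mu)$, and $\|g\|_{*}^{\prime}\le\|F\|_{L^{\infty}(\mu^{*})}$.

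The second step is to verify that this $g$ coincides with $\pi F$. On the dense subspace $L^{2}(\mu)\subset L^{\Phi}(\mu)$, Theorem \ref{thm:L^2 isomorphism} together with the definition of $P$ and $\pi=I_{1}^{-1}\circ P$ yield, for every $f\in L^{2}(\mu)$,
\[
\phi_{F}(f)=\langle F,I_{1}(f)\rangle_{L^{2}(\mu^{*})}=\langle PF,I_{1}(f)\rangle_{L^{2}(\mu^{*})}=\langle I_{1}(\pi F),I_{1}(f)\rangle_{L^{2}(\mu^{*})}=\langle\pi F,f\rangle_{L^{2}(\mu)}.
\]
Hence $g$ and $\pi F$ (both in $L^{2}(\mu)$) pair identically against the dense set $L^{2}(\mu)$, so $\pi F=g\in L^{2}(\mu)\cap L^{\infty}(\mu)$ with $\|\pi F\|_{*}^{\prime}\le\|F\|_{L^{\infty}(\mu^{*})}$, proving boundedness.

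For surjectivity, fix $g\in L^{2}(\mu)\cap L^{\infty}(\mu)$ and run a Hahn-Banach extension. By Proposition \ref{prop:closnessofI}, $\mathcal{I}_{1}$ is a closed subspace of $(L^{1}(\mu^{*}),\|\cdot\|_{L^{1}(\mu^{*})})$, and $I_{1}:(L^{\Phi}(\mu),\|\cdot\|_{*})\to(\mathcal{I}_{1},\|\cdot\|_{L^{1}(\mu^{*})})$ is an isometric isomorphism. Define $\psi_{g}$ on $\mathcal{I}_{1}$ by $\psi_{g}(I_{1}(f)):=\int_{X}gf\,d\mu$; then
\[
|\psi_{g}(I_{1}(f))|\le\|g\|_{*}^{\prime}\|f\|_{*}=\|g\|_{*}^{\prime}\|I_{1}(f)\|_{L^{1}(\mu^{*})},
\]
so $\psi_{g}$ is a continuous linear functional on $\mathcal{I}_{1}$ of norm at most $\|g\|_{*}^{\prime}$. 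Extending it by Hahn-Banach to all of $L^{1}(\mu^{*})$ produces an $F\in L^{\infty}(\mu^{*})$ with $\mathbb{E}_{\mu^{*}}[F\cdot I_{1}(f)]=\int_{X}gf\,d\mu$ for every $f\in L^{\Phi}(\mu)$. The computation of the first two paragraphs then forces $\pi F=g$.

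The main obstacle is conceptual rather than technical: $\pi$ was defined purely through the $L^{2}(\mu^{*})$ orthogonal projection, so one has no direct access to $L^{\infty}$-boundedness of its image. The duality argument through $\phi_{F}$, combined with the Orlicz identification of $L^{\Phi}(\mu)^{\prime}$, is precisely what upgrades the automatic $L^{2}(\mu)$-control on $\pi F$ into the stronger $\|\cdot\|_{*}^{\prime}$-bound and simultaneously delivers the surjectivity via Hahn-Banach.
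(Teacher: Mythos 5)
Your proposal is correct and follows essentially the same route as the paper: bound the functional $f\mapsto\mathbb{E}_{\mu^{*}}\left[F\,I_{1}\left(f\right)\right]$ by $\left\Vert F\right\Vert _{L^{\infty}\left(\mu^{*}\right)}\left\Vert f\right\Vert _{*}$, identify its Orlicz-dual representative with $\pi F$ via the $L^{2}$ pairing and Theorem \ref{thm:L^2 isomorphism}, and obtain surjectivity by Hahn--Banach extension of the functional $I_{1}\left(f\right)\mapsto\int_{X}gf\,d\mu$ from $\mathcal{I}_{1}$ to $L^{1}\left(\mu^{*}\right)$. The only cosmetic difference is writing the expectations as $L^{2}\left(\mu^{*}\right)$ inner products, which matches the paper's own (conjugation-free) conventions.
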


\begin{proof}
If $Z\in L^{\infty}\left(\mu^{*}\right)$, then the formula
\[
L^{\Phi}\left(\mu\right)\ni f\mapsto\mathbb{E}_{\mu^{*}}\left[ZI_{1}\left(f\right)\right]
\]

defines a continuous linear form on $\left(L^{\Phi}\left(\mu\right),\left\Vert \cdot\right\Vert _{*}\right)$
since
\begin{align}
\left|\mathbb{E}_{\mu^{*}}\left[ZI_{1}\left(f\right)\right]\right| & \le\left\Vert Z\right\Vert _{L^{\infty}\left(\mu^{*}\right)}\left\Vert I_{1}\left(f\right)\right\Vert _{L^{1}\left(\mu^{*}\right)}\nonumber \\
 & =\left\Vert Z\right\Vert _{L^{\infty}\left(\mu^{*}\right)}\left\Vert f\right\Vert _{*}.\label{eq:continuity=00005Cpi}
\end{align}

Therefore, there exists $g\in L^{2}\left(\mu\right)\cap L^{\infty}\left(\mu\right)$
such that, for any $f\in L^{\Phi}\left(\mu\right)$:
\[
\mathbb{E}_{\mu^{*}}\left[ZI_{1}\left(f\right)\right]=\int_{X}gfd\mu
\]

but, if $f\in L^{2}\left(\mu\right)$, then, if $P\left(Z\right)=I_{1}\left(h\right)$
for some $h\in L^{2}\left(\mu\right)$, then
\begin{align*}
\mathbb{E}_{\mu^{*}}\left[ZI_{1}\left(f\right)\right] & =\mathbb{E}_{\mu^{*}}\left[I_{1}\left(g\right)I_{1}\left(f\right)\right]\\
 & =\int_{X}hfd\mu.
\end{align*}

Therefore $h=g$ and $\pi\left(Z\right)=g\in L^{2}\left(\mu\right)\cap L^{\infty}\left(\mu\right)$.

Conversely, for any $g\in L^{2}\left(\mu\right)\cap L^{\infty}\left(\mu\right)$,
the map
\[
F\mapsto\int_{X}gI_{1}^{-1}\left(F\right)d\mu=\mathbb{E}_{\mu^{*}}\left[I_{1}\left(g\right)F\right]
\]

defines a continuous linear form on $\left(\mathcal{I}_{1},\left\Vert \cdot\right\Vert _{L^{1}\left(\mu^{*}\right)}\right)$
that extends, thanks to Hahn-Banach theorem to $\left(L^{1}\left(\mu^{*}\right),\left\Vert \cdot\right\Vert _{L^{1}\left(\mu^{*}\right)}\right)$,
that is, there exists $Z\in L^{\infty}\left(\mu^{*}\right)$ such
that, for, any $F\in L^{1}\left(\mu^{*}\right)$:
\[
\mathbb{E}_{\mu^{*}}\left[ZF\right]=\mathbb{E}_{\mu^{*}}\left[I_{1}\left(g\right)F\right].
\]

In particular, for any $f\in\text{\ensuremath{L^{2}\left(\mu\right)}}$,
\[
\mathbb{E}_{\mu^{*}}\left[ZI_{1}\left(f\right)\right]=\mathbb{E}_{\mu^{*}}\left[I_{1}\left(g\right)I_{1}\left(f\right)\right].
\]

And this proves that $\pi\left(Z\right)=g$. Therefore the range of
$\pi$ is $L^{2}\left(\mu\right)\cap L^{\infty}\left(\mu\right)$
and from (\ref{eq:continuity=00005Cpi}), we get
\[
\left\Vert \pi\left(Z\right)\right\Vert _{*}^{\prime}\le\left\Vert Z\right\Vert _{L^{\infty}\left(\mu^{*}\right)}.
\]
\end{proof}
\begin{prop}
\label{prop:dualitystructure}In the duality $\sigma\left(L^{1}\left(\mu^{*}\right),L^{\infty}\left(\mu^{*}\right)\right)$,
the orthogonal $\mathcal{I}_{1}^{\perp}$ of $\mathcal{I}_{1}$ is
the subspace $\ker\pi$. In particular, there is a canonical isometric
identification
\[
\left(L^{\infty}\left(\mu^{*}\right)/\ker\pi,\left\Vert \cdot\right\Vert _{/}\right)\simeq\left(L^{2}\left(\mu\right)\cap L^{\infty}\left(\mu\right),\left\Vert \cdot\right\Vert _{*}^{\prime}\right),
\]

where $\left\Vert \cdot\right\Vert _{/}$ is the usual quotient norm
induced by $\left\Vert \cdot\right\Vert _{L^{\infty}\left(\mu^{*}\right)}$.
\end{prop}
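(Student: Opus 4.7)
The plan is to recognize the statement as the standard Banach-space duality $V^{*}\simeq W^{*}/V^{\perp}$ applied to the closed subspace $V=\mathcal{I}_{1}$ of $W=L^{1}(\mu^{*})$, combined with the isometric identifications already established in the preceding results.

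For the orthogonality claim, I would compute $\mathcal{I}_{1}^{\perp}$ directly. Take $Z\in L^{\infty}(\mu^{*})$. The computation carried out in the proof of the preceding proposition, after splitting any $f\in L^{\Phi}(\mu)$ via $f=f1_{\left|f\right|\le1}+f1_{\left|f\right|>1}$ into a piece in $L^{2}(\mu)$ and a piece in $L^{1}(\mu)$, yields
\[
\mathbb{E}_{\mu^{*}}\left[ZI_{1}(f)\right]=\int_{X}\pi(Z)\,f\,d\mu
\]
for every $f\in L^{\Phi}(\mu)$. Since $\pi(Z)\in L^{2}(\mu)\cap L^{\infty}(\mu)$ and $L^{\Phi}(\mu)$ contains every indicator of a set in $\mathcal{A}_{f}$, this pairing determines $\pi(Z)$, so $Z\in\mathcal{I}_{1}^{\perp}$ if and only if $\pi(Z)=0$. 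This gives $\mathcal{I}_{1}^{\perp}=\ker\pi$.

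For the isometric identification, I would show that the map $\widetilde{\pi}:L^{\infty}(\mu^{*})/\ker\pi\to L^{2}(\mu)\cap L^{\infty}(\mu)$ induced by $\pi$ is a bijective isometry onto $(L^{2}(\mu)\cap L^{\infty}(\mu),\left\Vert \cdot\right\Vert _{*}^{\prime})$. Well-definedness and injectivity are immediate from the first part, while surjectivity is exactly what was established in the preceding proposition. The estimate $\left\Vert \pi(Z)\right\Vert _{*}^{\prime}\le\left\Vert Z\right\Vert _{L^{\infty}(\mu^{*})}$ recorded there is valid for every representative of a class $[Z]$, so $\left\Vert \pi(Z)\right\Vert _{*}^{\prime}\le\left\Vert [Z]\right\Vert _{/}$. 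For the reverse inequality, given $g\in L^{2}(\mu)\cap L^{\infty}(\mu)$, the linear form on $(\mathcal{I}_{1},\left\Vert \cdot\right\Vert _{L^{1}(\mu^{*})})$ defined by $I_{1}(f)\mapsto\int_{X}gf\,d\mu$ has norm exactly $\left\Vert g\right\Vert _{*}^{\prime}$ (by definition of the dual norm transferred through the isometry $I_{1}$), and Hahn--Banach extends it to a continuous linear form on $(L^{1}(\mu^{*}),\left\Vert \cdot\right\Vert _{L^{1}(\mu^{*})})$ of the same norm. This extension is represented by some $Z'\in L^{\infty}(\mu^{*})$ with $\pi(Z')=g$ and $\left\Vert Z'\right\Vert _{L^{\infty}(\mu^{*})}=\left\Vert g\right\Vert _{*}^{\prime}$; hence $\left\Vert [Z']\right\Vert _{/}\le\left\Vert g\right\Vert _{*}^{\prime}$, closing the isometry.

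No step looks like a genuine obstacle: closedness of $\mathcal{I}_{1}$ in $L^{1}(\mu^{*})$ (Proposition \ref{prop:closnessofI}), surjectivity and continuity of $\pi$, and the Orlicz-side duality (Theorem \ref{thm:Orlicz}) are already in hand. The only slightly delicate point is promoting the identity $\mathbb{E}_{\mu^{*}}[ZI_{1}(f)]=\int_{X}\pi(Z)f\,d\mu$ from $L^{2}(\mu)$, where it was proven, to all of $L^{\Phi}(\mu)$; the standard splitting of $f$ used throughout the paper dispatches this by reducing to the two cases $f\in L^{2}(\mu)$ and $f\in L^{1}(\mu)$, where $\pi(Z)\in L^{2}(\mu)\cap L^{\infty}(\mu)$ pairs unambiguously with each piece.
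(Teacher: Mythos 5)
Your proposal is correct and follows essentially the same route as the paper: both identify $\mathcal{I}_{1}^{\perp}$ with $\ker\pi$ via the identity $\mathbb{E}_{\mu^{*}}\left[ZI_{1}\left(f\right)\right]=\int_{X}\pi\left(Z\right)f\,d\mu$ on $L^{\Phi}\left(\mu\right)$ (the paper proves it on $L^{2}\left(\mu\right)$ and extends by $\left\Vert \cdot\right\Vert _{*}$-density, while you invoke the version already obtained in the preceding proposition via the splitting $f=f1_{\left|f\right|\le1}+f1_{\left|f\right|>1}$), and then appeal to the standard quotient duality. Your explicit Hahn--Banach argument for the isometry is exactly the step the paper leaves implicit, and it is carried out correctly.
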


\begin{proof}
If $f\in L^{2}\left(\mu\right)$ and $Z\in L^{\infty}\left(\mu^{*}\right)$,
then
\begin{align*}
\mathbb{E}_{\mu^{*}}\left[ZI_{1}\left(f\right)\right] & =\mathbb{E}_{\mu^{*}}\left[I_{1}\left(\pi\left(Z\right)\right)I_{1}\left(f\right)\right]\\
 & =\int_{X}\pi\left(Z\right)fd\mu.
\end{align*}

This formula extends by density of $L^{2}\left(\mu\right)$ in $L^{\Phi}\left(\mu\right)$
with respect to $\left\Vert \cdot\right\Vert _{*}$, and thus of $\mathcal{I}_{2}$
in $\mathcal{I}_{1}$ with respect to $\left\Vert \cdot\right\Vert _{L^{1}\left(\mu^{*}\right)}$.

Therefore $\mathcal{I}_{1}^{\perp}=\ker\pi$.

The map $\pi$ induces an isometric isomorphism between $\left(L^{\infty}\left(\mu^{*}\right)/\ker\pi,\left\Vert \cdot\right\Vert _{/}\right)$
and $\left(L^{2}\left(\mu\right)\cap L^{\infty}\left(\mu\right),\left\Vert \cdot\right\Vert _{*}^{\prime}\right)$.
\end{proof}
\begin{rem}
We could have appeal to general results on Banach spaces to get that
$L^{\infty}\left(\mu^{*}\right)/\mathcal{I}_{1}^{\perp}$ is canonically
isomorphic (and isometric) to the dual of $\mathcal{I}_{1}$, however
we get an explicit description through the operator $\pi$ as above.
\end{rem}

\subsection{Transfer operators}

Let $T$ be an endomorphism on $\left(X,\mathcal{A},\mu\right)$.
The \emph{transfer operator} is the operator $\widehat{T}$ acting
on $L^{1}\left(\mu\right)$ defined as the predual operator of $T$
seen as acting as an isometry on $L^{\infty}\left(\mu\right)$, characterized
by the following relation, for any $f\in L^{1}\left(\mu\right)$ and
$g\in L^{\infty}\left(\mu\right)$:
\[
\int_{X}\widehat{T}f\cdot gd\mu=\int_{X}f\cdot g\circ Td\mu.
\]

Similarly, the same object exists at the level of the Poisson suspension
$\left(X^{*},\mathcal{A}^{*},\mu^{*},T_{*}\right)$: $\widehat{T_{*}}$
is the transfer operator of $T_{*}$ .

We will mimic the same procedure to define a ``transfer'' operator
acting on $L^{\Phi}\left(\mu\right)$.
\begin{prop}
\label{prop:Tisomdual}The isometry $T_{*}$ on $\left(L^{\infty}\left(\mu^{*}\right),\left\Vert \cdot\right\Vert _{L^{\infty}\left(\mu^{*}\right)}\right)$
acts also on the quotient space $\left(L^{\infty}\left(\mu^{*}\right)/\ker\pi,\left\Vert \cdot\right\Vert _{/}\right)$
as an isometry and corresponds to $T$ through the identification
with $\left(L^{2}\left(\mu\right)\cap L^{\infty}\left(\mu\right),\left\Vert \cdot\right\Vert _{*}^{\prime}\right)$.
In particular, $T$ is an isometry of $\left(L^{2}\left(\mu\right)\cap L^{\infty}\left(\mu\right),\left\Vert \cdot\right\Vert _{*}^{\prime}\right)$.
\end{prop}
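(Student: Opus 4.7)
The plan has three parts: (a) show that $T_*$ preserves $\ker\pi$ and therefore descends to a contraction $\tilde T$ on the quotient $L^\infty(\mu^*)/\ker\pi$; (b) identify $\tilde T$, under the isometric identification $\bar\pi$ from Proposition \ref{prop:dualitystructure}, with the operator $g\mapsto g\circ T$ on $L^2(\mu)\cap L^\infty(\mu)$; and (c) prove directly the reverse inequality $\|g\circ T\|_*'\ge\|g\|_*'$ to upgrade the contraction to an isometry.

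For (a), since $\pi=I_1^{-1}\circ P$ and $P$ commutes with $T_*$ by Lemma \ref{lem:projection}, any $Z\in\ker\pi$ satisfies $P(Z\circ T_*)=P(Z)\circ T_*=0$, so $Z\circ T_*\in\ker\pi$. Thus $T_*$ descends to a well-defined linear operator $\tilde T$ on the quotient, and since $Z\mapsto Z\circ T_*$ is an isometry of $(L^\infty(\mu^*),\|\cdot\|_{L^\infty(\mu^*)})$ by measure preservation, $\tilde T$ is automatically a contraction for $\|\cdot\|_/$. For (b), given $Z\in L^\infty(\mu^*)$ I would write $P(Z)=I_1(\pi(Z))$ and combine Lemma \ref{lem:projection} with Proposition \ref{prop:Isom} to compute
\[
\pi(Z\circ T_*)=I_1^{-1}\bigl(P(Z)\circ T_*\bigr)=I_1^{-1}\bigl(I_1(\pi(Z))\circ T_*\bigr)=I_1^{-1}\bigl(I_1(\pi(Z)\circ T)\bigr)=\pi(Z)\circ T.
\]
Hence $\bar\pi\circ\tilde T=(\,\cdot\circ T\,)\circ\bar\pi$, and transferring the contractive property of $\tilde T$ across the isometric identification $\bar\pi$ yields $\|g\circ T\|_*'\le\|g\|_*'$ for every $g\in L^2(\mu)\cap L^\infty(\mu)$.

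For (c), given $g\in L^2(\mu)\cap L^\infty(\mu)$ and an arbitrary $f_0\in L^\Phi(\mu)$ with $\|f_0\|_*\le 1$, I would set $f:=f_0\circ T$. By Proposition \ref{prop:Isom} one has $\|f\|_*=\|f_0\|_*\le 1$, and by measure preservation
\[
\int_X (g\circ T)\,f\,d\mu=\int_X (gf_0)\circ T\,d\mu=\int_X gf_0\,d\mu;
\]
taking the supremum over admissible $f_0$ gives $\|g\circ T\|_*'\ge\|g\|_*'$. Combined with (b) this forces equality, so $g\mapsto g\circ T$ is an isometry of $(L^2(\mu)\cap L^\infty(\mu),\|\cdot\|_*')$ and correspondingly $\tilde T$ is an isometry of the quotient. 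The main subtlety is step (c): when $T$ is merely an endomorphism, $T_*$ need not be surjective on $L^\infty(\mu^*)$, so the reverse inequality on the quotient is not automatic from purely Banach-space considerations, and the explicit change-of-variables argument using Proposition \ref{prop:Isom} is genuinely needed.
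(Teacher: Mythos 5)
Your steps (a) and (b) are exactly the paper's proof: the paper shows $T_*$ preserves $\ker\pi$ via $P(Z\circ T_*)=P(Z)\circ T_*$ (Lemma \ref{lem:projection}) and then computes $\pi(Z\circ T_*)=I_1^{-1}(P(Z)\circ T_*)=I_1^{-1}(P(Z))\circ T=\pi(Z)\circ T$, which is your identification of the induced operator with $g\mapsto g\circ T$. Where you differ is step (c): the paper stops after the identification and declares the result proved, implicitly treating the induced quotient operator as an isometry, whereas the general fact is only that a (not necessarily surjective) isometry of $L^{\infty}(\mu^{*})$ preserving a closed subspace induces a \emph{contraction} on the quotient. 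Your observation that this is a real issue for endomorphisms is correct, and your fix is clean: since $\|g\|_{*}^{\prime}=\sup\{|\int_X gf\,d\mu|:\|f\|_{*}\le1\}$, restricting the supremum for $\|g\circ T\|_{*}^{\prime}$ to test functions of the form $f=f_0\circ T$ (admissible because $\|f_0\circ T\|_{*}=\|f_0\|_{*}$ by Proposition \ref{prop:Isom}) and using $\int_X (gf_0)\circ T\,d\mu=\int_X gf_0\,d\mu$ gives the reverse inequality $\|g\circ T\|_{*}^{\prime}\ge\|g\|_{*}^{\prime}$. (One should just note that $gf_0\in L^1(\mu)$ for $g\in L^2(\mu)\cap L^{\infty}(\mu)$ and $f_0\in L^{\Phi}(\mu)$, via the splitting $f_0=f_01_{|f_0|\le1}+f_01_{|f_0|>1}$, so the change of variables is legitimate.) Your version is therefore a complete proof of the stated isometry claim, filling in a step the paper leaves unjustified; for an automorphism the gap could alternatively be closed by noting that $Z\mapsto Z\circ T_*$ is then a surjective isometry mapping $\ker\pi$ onto itself, but your duality argument covers the endomorphism case the paper actually needs.
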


\begin{proof}
The first step consists in showing that $T_{*}$ preserves $\ker\pi$:

Take $Z\in\ker\pi$, in particular $Z$ belongs to $\mathcal{I}_{2}^{\perp}\subset L^{2}\left(\mu^{*}\right)$.
But $T_{*}$ commutes with $P$ thanks to Lemma \ref{lem:projection}
and thus $P\left(Z\circ T_{*}\right)=P\left(Z\right)\circ T_{*}=0$
and thus $Z\circ T_{*}\in\ker\pi$.

Now observe that for any $Z\in L^{\infty}\left(\mu^{*}\right)$:
\begin{align*}
\pi\left(Z\circ T_{*}\right) & =I_{1}^{-1}\left(P\left(Z\circ T_{*}\right)\right)\\
 & =I_{1}^{-1}\left(P\left(Z\right)\circ T_{*}\right)\\
 & =I_{1}^{-1}\left(P\left(Z\right)\right)\circ T\\
 & =\pi\left(Z\right)\circ T
\end{align*}

and this proves the result.
\end{proof}
\begin{defn}
\label{def:DualT_P}We define the \emph{Poisson-transfer operator}
$\widehat{T}_{\mathcal{P}}$ acting on $L^{\Phi}\left(\mu\right)$
as the dual operator of $T$ acting on $L^{\Phi}\left(\mu\right)^{\prime}$.
It satisfies, for any $f$ in $L^{\Phi}\left(\mu\right)$, $g\in L^{2}\left(\mu\right)\cap L^{\infty}\left(\mu\right)$
:
\begin{align*}
\int_{X}\widehat{T}_{\mathcal{P}}\left(f\right)\cdot gd\mu & =\int_{X}fg\circ Td\mu.
\end{align*}
\end{defn}

\begin{prop}
\label{prop:CoincidenceDual}$\widehat{T_{*}}$ preserves $\mathcal{I}_{1}$
and satisfies, for all $f\in L^{\Phi}\left(\mu\right)$:
\[
\widehat{T_{*}}I_{1}\left(f\right)=I_{1}\left(\widehat{T}_{\mathcal{P}}f\right)
\]

Moreover $\widehat{T}_{\mathcal{P}}$ preserves $L^{1}\left(\mu\right)$
and coincide with $\widehat{T}$ on it.
\end{prop}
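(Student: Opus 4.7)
The plan is to verify both claims by testing against elements of $L^\infty(\mu^*)$, relying on the duality identity
\[
\mathbb{E}_{\mu^{*}}\!\left[I_{1}(f)\cdot W\right]=\int_{X}\pi(W)\,f\,d\mu,
\]
which is established for $f\in L^{2}(\mu)$ inside the proof of Proposition \ref{prop:dualitystructure} and extends to $f\in L^{\Phi}(\mu)$ by $\|\cdot\|_{*}$-continuity in $f$, together with the commutation $\pi(Z\circ T_{*})=\pi(Z)\circ T$ from Proposition \ref{prop:Tisomdual}.

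For the first claim, fix $f\in L^{\Phi}(\mu)$ and an arbitrary $Z\in L^{\infty}(\mu^{*})$. By definition of $\widehat{T_{*}}$,
\[
\mathbb{E}_{\mu^{*}}\!\left[\widehat{T_{*}}I_{1}(f)\cdot Z\right]=\mathbb{E}_{\mu^{*}}\!\left[I_{1}(f)\cdot Z\circ T_{*}\right].
\]
Applying the duality identity with $W=Z\circ T_{*}$ and using Proposition \ref{prop:Tisomdual}, the right-hand side equals $\int_{X}\pi(Z)\circ T\cdot f\,d\mu$. Since $\pi(Z)\in L^{2}(\mu)\cap L^{\infty}(\mu)$, the defining relation of $\widehat{T}_{\mathcal{P}}$ (Definition \ref{def:DualT_P}) rewrites this as $\int_{X}\pi(Z)\cdot\widehat{T}_{\mathcal{P}}f\,d\mu$, and a second use of the duality identity (applied to $\widehat{T}_{\mathcal{P}}f\in L^{\Phi}(\mu)$) turns it into $\mathbb{E}_{\mu^{*}}[I_{1}(\widehat{T}_{\mathcal{P}}f)\cdot Z]$. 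Since $Z$ was arbitrary, $\widehat{T_{*}}I_{1}(f)=I_{1}(\widehat{T}_{\mathcal{P}}f)$ as elements of $L^{1}(\mu^{*})$, which simultaneously proves $\widehat{T_{*}}(\mathcal{I}_{1})\subset\mathcal{I}_{1}$.

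For the second claim, take $f\in L^{1}(\mu)\subset L^{\Phi}(\mu)$; then $\widehat{T}f\in L^{1}(\mu)\subset L^{\Phi}(\mu)$, and the classical duality relation defining $\widehat{T}$ yields $\int_{X}\widehat{T}f\cdot g\,d\mu=\int_{X}f\cdot g\circ T\,d\mu$ for every $g\in L^{\infty}(\mu)$, in particular for every $g\in L^{2}(\mu)\cap L^{\infty}(\mu)=L^{\Psi}(\mu)$. Thus $\widehat{T}f$ satisfies the very relation defining $\widehat{T}_{\mathcal{P}}f$. It remains to invoke the uniqueness of an element $h\in L^{\Phi}(\mu)$ with prescribed pairings $\int_{X}hg\,d\mu$ for $g\in L^{\Psi}(\mu)$: if these pairings vanish, then testing against $1_{A}$ for arbitrary measurable $A\subset\{|h|>1/n\}$ (a set of finite $\mu$-measure by the $L^{\Phi}$ condition) forces $h\,1_{\{|h|>1/n\}}=0$ a.e., and letting $n\to\infty$ gives $h=0$. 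This identifies $\widehat{T}_{\mathcal{P}}f$ with $\widehat{T}f$, showing that $\widehat{T}_{\mathcal{P}}$ preserves $L^{1}(\mu)$ and coincides with $\widehat{T}$ there.

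The only delicate step is the extension of the duality identity from $L^{2}(\mu)$ to $L^{\Phi}(\mu)$: the left-hand side is bounded by $\|W\|_{L^{\infty}(\mu^{*})}\|f\|_{*}$, while the right-hand side is controlled via the Hölder--Orlicz inequality since $\pi(W)\in L^{\Psi}(\mu)$, with the equivalence $\|\cdot\|_{*}\asymp\|\cdot\|_{\Phi}$ from Urbanik's Theorem \ref{thm:Urbanik} converting $\|\cdot\|_{\Phi}$-continuity into $\|\cdot\|_{*}$-continuity; the $\|\cdot\|_{*}$-density of $L^{2}(\mu)$ in $L^{\Phi}(\mu)$ (Theorem \ref{thm:Orlicz}) then closes the extension.
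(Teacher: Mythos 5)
Your proof is correct, and while the first half runs along the same lines as the paper, the second half takes a genuinely different route. For the intertwining relation $\widehat{T_{*}}I_{1}\left(f\right)=I_{1}\left(\widehat{T}_{\mathcal{P}}f\right)$, the paper simply invokes Proposition \ref{prop:Tisomdual} together with $\ker\pi=\mathcal{I}_{1}^{\perp}$ (Proposition \ref{prop:dualitystructure}); your explicit computation against test functions $Z\in L^{\infty}\left(\mu^{*}\right)$, using $\mathbb{E}_{\mu^{*}}\left[I_{1}\left(f\right)Z\right]=\int_{X}\pi\left(Z\right)f\,d\mu$ and $\pi\left(Z\circ T_{*}\right)=\pi\left(Z\right)\circ T$, is the same duality argument unpacked, and it is sound. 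The real divergence is in the proof that $\widehat{T}_{\mathcal{P}}$ preserves $L^{1}\left(\mu\right)$ and equals $\widehat{T}$ there. The paper first establishes $\widehat{T}_{\mathcal{P}}f\in L^{1}\left(\mu\right)$ for $f\ge0$ via a positivity argument: $I_{1}\left(f\right)\ge-\mu\left(f\right)$, the positive operator $\widehat{T_{*}}$ preserves this lower bound, and Proposition \ref{prop:positivity-1} (the distributional characterization of bounded-below stochastic integrals) then forces $\widehat{T}_{\mathcal{P}}f\in L^{1}\left(\mu\right)$, $\widehat{T}_{\mathcal{P}}f\ge0$; only afterwards does it check coincidence with $\widehat{T}$ by testing against $1_{A}$, $A\in\mathcal{A}_{f}$. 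You instead observe that $\widehat{T}f\in L^{1}\left(\mu\right)\subset L^{\Phi}\left(\mu\right)$ already satisfies the defining relation of $\widehat{T}_{\mathcal{P}}f$ against every $g\in L^{\Psi}\left(\mu\right)$, and that this relation determines an element of $L^{\Phi}\left(\mu\right)$ uniquely because the pairing with $L^{\Psi}\left(\mu\right)$ separates points (your verification via $1_{A}$, $A\subset\left\{ \left|h\right|>1/n\right\} $, is correct since those sets have finite measure and $h$ is integrable on them). This kills both claims in one stroke and avoids Proposition \ref{prop:positivity-1} entirely; what it buys is brevity and a purely functional-analytic argument, whereas the paper's positivity route additionally exhibits the order-theoretic compatibility between $\widehat{T_{*}}$ and $\widehat{T}_{\mathcal{P}}$, which is of some independent interest but is not needed for the statement.
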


\begin{proof}
The first part of the proof follows immediately from Proposition \ref{prop:Tisomdual}
and the fact that $\ker\pi=\mathcal{I}_{1}^{\perp}$ from Proposition
\ref{prop:dualitystructure}.

Now taking $f\in L^{1}\left(\mu\right)\subset L^{\Phi}\left(\mu\right)$,
for any set $A\in\mathcal{A}_{f}$:
\[
\int_{A}\widehat{T}_{\mathcal{P}}\left(f\right)d\mu=\int_{T^{-1}A}fd\mu=\int_{A}\widehat{T}\left(f\right)d\mu.
\]

and this implies $\widehat{T}_{\mathcal{P}}f=\widehat{T}f$, that
is $\widehat{T}_{\mathcal{P}}$ coincide with $\widehat{T}$ on $L^{1}\left(\mu\right)$.
\end{proof}
\begin{rem}
Being a positive operator defined on $L^{1}\left(\mu\right)$, $\widehat{T}$
extends in a unique (mod. $\mu$) way to any non-negative measurable
function. The same applies to $\widehat{T}_{\mathcal{P}}$ and since
they coincide on $L^{1}\left(\mu\right)$, the extension of $\widehat{T}$
to $L^{\Phi}\left(\mu\right)$ is in fact $\widehat{T}_{\mathcal{P}}$.
\end{rem}

\section{\label{sec:exact-and-remotely}exact and remotely infinite transformations}

We recall the two definitions that we will deal with in this section:
\begin{defn}
(See \cite{KrenSuch69MixInf}) $\left(X,\mathcal{A},\mu,T\right)$
is said to be 
\begin{itemize}
\item \emph{exact} if $\cap_{n\ge0}T^{-n}\mathcal{A}=\left\{ \emptyset,X\right\} $
mod. $\mu$.
\item \emph{remotely infinit}e\footnote{We warn the reader that the authors of \cite{KrenSuch69MixInf} also
define, \emph{remotely infinite automorphisms} (which cannot exist
with the above definition...) in the same paper. This is the same
difference that occurs between exact endomorphisms and $K$-automorphisms.} if $\cap_{n\ge0}T^{-n}\mathcal{A}$ contains only $0$ or infinite
measure sets.
\end{itemize}
\end{defn}

We recall the chain of implications we already gave in the introduction:

\[
T\:\text{exact}\:\Rightarrow T\:\text{remotely infinite}\:\Rightarrow T\:\text{of zero type}\Longrightarrow\text{absence of a.c. \ensuremath{T}-invariant measure}.
\]

Note also that exactness implies ergodicity whereas it is not the
case for the other three properties.
\begin{example}
It is be proven (see Corollary 3.7.7 in \cite{Aar97InfErg}) that
pointwise dual ergodic endomorphisms (see also \cite{Aar97InfErg}
for the definition and examples) are always remotely infinite. Null
recurrent aperiodic Markov chains under their infinite invariant measure
yields unilateral shifts that are exact endomorphisms.
\end{example}

We are interested in transfer operator characterization of these properties,
we also recall Lin's result:
\begin{thm}
\label{thm:lin}(\cite{Lin1971}) $\left(X,\mathcal{A},\mu,T\right)$
is exact if and only if, for all $f\in L_{0}^{1}\left(\mu\right)$:
\[
\left\Vert \widehat{T^{n}}f\right\Vert _{L^{1}\left(\mu\right)}\to0,
\]

as $n\to+\infty$.
\end{thm}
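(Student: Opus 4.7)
The plan is to reduce the statement to convergence of conditional expectations along the decreasing family $\mathcal{F}_{n}:=T^{-n}\mathcal{A}$, whose intersection is the tail $\sigma$-algebra $\mathcal{T}:=\bigcap_{n\ge 0}\mathcal{F}_{n}$. The key identity I would establish first is
\[
\mathbb{E}_{\mu}\!\left[f\mid\mathcal{F}_{n}\right]=\bigl(\widehat{T^{n}}f\bigr)\circ T^{n},\qquad f\in L^{1}(\mu),
\]
obtained by testing against bounded $\mathcal{F}_{n}$-measurable functions $g=h\circ T^{n}$ with $h\in L^{\infty}(\mu)$ and applying the adjoint relation together with $T^{n}$-invariance of $\mu$. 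Since $T^{n}$ preserves $\mu$, this yields the norm equality $\|\widehat{T^{n}}f\|_{1}=\|\mathbb{E}_{\mu}[f\mid\mathcal{F}_{n}]\|_{1}$, so the problem becomes one about reverse martingales.

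For the forward direction, assume exactness, so $\mathcal{T}=\{\emptyset,X\}$ mod $\mu$. For any $f\in L_{0}^{1}(\mu)$, the reverse martingale convergence theorem, applied in the $\sigma$-finite setting (each $\mu_{|\mathcal{F}_{n}}$ is $\sigma$-finite by measure-preservation of $T^{n}$, and the sequence $\{\mathbb{E}_{\mu}[f\mid\mathcal{F}_{n}]\}$ has $L^{1}$-norm uniformly bounded by $\|f\|_{1}$), gives convergence in $L^{1}$ to an integrable, $\mathcal{T}$-measurable limit $\phi_{\infty}$. Triviality of $\mathcal{T}$ forces $\phi_{\infty}$ to be $\mu$-a.e.\ constant, and since $\mu(X)=\infty$ the only integrable constant is $0$; thus $\|\widehat{T^{n}}f\|_{1}\to 0$.

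For the converse, I would argue by contraposition. Suppose $\mathcal{T}$ is not trivial mod $\mu$: there exists $B\in\mathcal{T}$ with neither $B$ nor $X\setminus B$ $\mu$-null. Using $\sigma$-finiteness of $\mu$, choose $B_{1}\subset B$ and $B_{2}\subset X\setminus B$ with $0<\mu(B_{1})=\mu(B_{2})<\infty$, and set $f:=1_{B_{1}}-1_{B_{2}}\in L_{0}^{1}(\mu)$. Writing $B=T^{-n}C_{n}$ with $C_{n}\in\mathcal{A}$, the defining adjoint relation of $\widehat{T^{n}}$ gives
\[
\int_{C_{n}}\widehat{T^{n}}f\,d\mu=\int_{B}f\,d\mu=\mu(B_{1})>0,
\]
whence $\|\widehat{T^{n}}f\|_{1}\ge\mu(B_{1})$, contradicting the hypothesis.

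The genuine obstacle is the reverse martingale step, since the standard $L^{1}$ convergence theorem is usually formulated on a probability space. The workaround is to observe that $f\cdot\mu$ is a finite signed measure whose Radon--Nikodym derivatives with respect to the $\sigma$-finite restrictions $\mu_{|\mathcal{F}_{n}}$ are exactly the conditional expectations, and to invoke the decreasing-conditional-expectations $L^{1}$ theorem in this framework. The identification of the a.e.\ limit as $0$---rather than $\int f\,d\mu/\mu(X)$, as in the probabilistic case---is precisely where the infinite-measure hypothesis intervenes; it also explains why $f\in L_{0}^{1}(\mu)$ is natural and necessary, since otherwise $\int\widehat{T^{n}}f\,d\mu=\int f\,d\mu\neq 0$ would prevent any $L^{1}$-convergence to $0$.
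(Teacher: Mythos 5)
First, a point of reference: the paper does not prove this statement --- it is quoted from \cite{Lin1971} --- so there is no in-paper argument to compare yours with, and I assess it on its own terms. Your identity $\mathbb{E}_{\mu}\left[f\mid T^{-n}\mathcal{A}\right]=\left(\widehat{T^{n}}f\right)\circ T^{n}$ is correct (each $\mu_{\mid T^{-n}\mathcal{A}}$ is $\sigma$-finite, so the conditional expectation exists, and measure preservation gives $\left\Vert \widehat{T^{n}}f\right\Vert _{1}=\left\Vert \mathbb{E}_{\mu}\left[f\mid T^{-n}\mathcal{A}\right]\right\Vert _{1}$), and your converse is sound; it is essentially the same ``test against a finite-measure tail set'' device the paper uses for the converse of its analogue, Theorem \ref{thm:remotelyDualOp}.

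The gap is in the forward direction, at precisely the step you flag and then wave through. There is no off-the-shelf decreasing-conditional-expectation $L^{1}$-convergence theorem applicable here: when $\mu\left(X\right)=\infty$ and the tail $\mathcal{T}=\cap_{n}T^{-n}\mathcal{A}$ is trivial, $\mu_{\mid\mathcal{T}}$ is \emph{not} $\sigma$-finite, which is exactly the hypothesis the $\sigma$-finite versions require. Worse, the statement you invoke is false unless the mean-zero hypothesis enters the \emph{proof} of convergence, not merely the identification of the limit. Take $X=\mathbb{R}$ with Lebesgue measure and let $\mathcal{F}_{n}$ be the $\sigma$-algebra of measurable sets that either contain or are disjoint from $\left[-n,n\right]$: the $\mathcal{F}_{n}$ decrease, each $\mu_{\mid\mathcal{F}_{n}}$ is $\sigma$-finite, $\cap_{n}\mathcal{F}_{n}=\left\{ \emptyset,X\right\} $, and yet for $f=1_{\left[0,1\right]}$ one has $\mathbb{E}\left[f\mid\mathcal{F}_{n}\right]=\frac{1}{2n}1_{\left[-n,n\right]}$, which converges a.e. to the integrable $\mathcal{T}$-measurable constant $0$ while $\left\Vert \mathbb{E}\left[f\mid\mathcal{F}_{n}\right]\right\Vert _{1}\equiv1$. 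So ``uniformly bounded $L^{1}$ norms plus a.e. convergence to $0$'' does not give $L^{1}$ convergence: mass escapes to infinity, and the hypothesis $\int_{X}f\,d\mu=0$ must be used to rule this out. Your argument uses it only to identify the limiting constant as $0$, a step that is vacuous anyway since $0$ is the only integrable constant on an infinite measure space. A correct route avoiding martingales altogether: write $\left\Vert \widehat{T^{n}}f\right\Vert _{1}=\sup\left\{ \left|\int_{X}fh\,d\mu\right|:h\in L^{\infty}\left(T^{-n}\mathcal{A}\right),\left\Vert h\right\Vert _{\infty}\le1\right\} $, observe that these suprema are taken over a decreasing sequence of weak-$*$ compact balls whose intersection is the unit ball of $L^{\infty}\left(\mathcal{T}\right)$ (a limsup of $T^{-n}\mathcal{A}$-measurable representatives shows $\cap_{n}L^{\infty}\left(T^{-n}\mathcal{A}\right)=L^{\infty}\left(\mathcal{T}\right)$ mod $\mu$), and conclude that the decreasing limit equals $\sup\left\{ \left|\int_{X}fh\,d\mu\right|:h\in L^{\infty}\left(\mathcal{T}\right),\left\Vert h\right\Vert _{\infty}\le1\right\} =\left|\int_{X}f\,d\mu\right|=0$ when $\mathcal{T}$ is trivial.
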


We will prove the following:
\begin{thm}
\label{thm:remotelyDualOp}$\left(X,\mathcal{A},\mu,T\right)$ is
remotely infinite if and only if, for all $f\in L_{0}^{1}\left(\mu\right)$:
\[
\left\Vert \widehat{T^{n}}f\right\Vert _{*}\to0,
\]

as $n\to+\infty$.
\end{thm}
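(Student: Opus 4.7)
The strategy mirrors the proofs of Theorems \ref{thm:ergodicL12} and \ref{thm:Blum-Hanson_type}: push the statement to the Poisson suspension, apply there a classical probability-preserving result, and translate back via the isometry $I_{1}$ together with the intertwining $\widehat{T_{*}}\circ I_{1}=I_{1}\circ\widehat{T_{\mathcal{P}}}$ of Proposition \ref{prop:CoincidenceDual}. Here the classical result will be Lin's characterization of exactness (Theorem \ref{thm:lin}) applied to $\left(X^{*},\mathcal{A}^{*},\mu^{*},T_{*}\right)$.

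The missing piece, which extends the hierarchy of Theorem \ref{thm:Ergodic basics} by one level, is that $T_{*}$ is \emph{exact} if and only if $T$ is remotely infinite. I would prove this directly from the identity
\[
T_{*}^{-n}\mathcal{A}^{*}=\sigma\left\{ N\left(A\right):\,A\in T^{-n}\mathcal{A}\right\},
\]
noting that $N\left(A\right)$ is $\mu^{*}$-a.s.\ constant precisely when $\mu\left(A\right)\in\left\{ 0,+\infty\right\} $, so $\bigcap_{n}T_{*}^{-n}\mathcal{A}^{*}$ is $\mu^{*}$-trivial if and only if $\bigcap_{n}T^{-n}\mathcal{A}$ contains only sets of $\mu$-measure $0$ or $\infty$.

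Assuming this equivalence, the forward direction is immediate: if $T$ is remotely infinite then $T_{*}$ is exact, Lin's theorem yields $\left\Vert \widehat{T_{*}^{n}}F\right\Vert _{L^{1}\left(\mu^{*}\right)}\to0$ for every $F\in L_{0}^{1}\left(\mu^{*}\right)$, and applying this to $F=I_{1}\left(f\right)$ for $f\in L^{\Phi}\left(\mu\right)$ gives
\[
\left\Vert \widehat{T_{\mathcal{P}}^{n}}f\right\Vert _{*}=\left\Vert I_{1}\left(\widehat{T_{\mathcal{P}}^{n}}f\right)\right\Vert _{L^{1}\left(\mu^{*}\right)}=\left\Vert \widehat{T_{*}^{n}}I_{1}\left(f\right)\right\Vert _{L^{1}\left(\mu^{*}\right)}\longrightarrow0,
\]
which is equivalent to $\left\Vert \widehat{T_{\mathcal{P}}^{n}}f\right\Vert _{\Phi}\to0$ by Theorem \ref{thm:Urbanik} and coincides with $\left\Vert \widehat{T^{n}}f\right\Vert _{\Phi}\to0$ when $f\in L^{1}\left(\mu\right)$ thanks to Proposition \ref{prop:CoincidenceDual}. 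The passage between $L_{0}^{1}\left(\mu\right)$ and $L^{\Phi}\left(\mu\right)$ uses a density argument (Proposition \ref{prop:densityL^1_0}) together with the $\left\Vert \cdot\right\Vert _{*}$-contractivity of $\widehat{T_{\mathcal{P}}}$ inherited from that of $\widehat{T_{*}}$ on $L^{1}\left(\mu^{*}\right)$.

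Conversely, assume $\left\Vert \widehat{T^{n}}f\right\Vert _{*}\to0$ for every $f\in L^{1}\left(\mu\right)$ and suppose, for contradiction, that $T$ is not remotely infinite: pick $A\in\bigcap_{n}T^{-n}\mathcal{A}$ with $0<\mu\left(A\right)<+\infty$ and write $A=T^{-n}A_{n}$, so that $1_{A}=1_{A_{n}}\circ T^{n}$. Using $\widehat{T}\left(g\circ T\right)=g$, one has $\widehat{T^{n}}1_{A}=1_{A_{n}}$ with $\mu\left(A_{n}\right)=\mu\left(A\right)$, hence
\[
\left\Vert \widehat{T^{n}}1_{A}\right\Vert _{*}=\mathbb{E}_{\mu^{*}}\left[\left|N\left(A_{n}\right)-\mu\left(A_{n}\right)\right|\right]
\]
is a strictly positive constant independent of $n$, a contradiction. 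The main obstacle in the whole plan is the preliminary equivalence between exactness of $T_{*}$ and remote infiniteness of $T$; once granted, everything else is a straightforward transcription through $I_{1}$.
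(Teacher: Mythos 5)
Your overall architecture is the same as the paper's: reduce to the statement ``$T$ remotely infinite $\Longleftrightarrow$ $T_{*}$ exact'', apply Lin's theorem on the suspension, and transport through the intertwining $\widehat{T_{*}}I_{1}\left(f\right)=I_{1}\left(\widehat{T}_{\mathcal{P}}f\right)$. The one place you genuinely diverge is the converse, and your version is cleaner: from $A=T^{-n}A_{n}$ you get $\widehat{T^{n}}1_{A}=1_{A_{n}}$ directly on the base (correct, since $\widehat{T^{n}}\left(h\circ T^{n}\right)=h$), and $\left\Vert 1_{A_{n}}\right\Vert _{*}=\mathbb{E}_{\mu^{*}}\left[\left|N\left(A_{n}\right)-\mu\left(A_{n}\right)\right|\right]$ depends only on the Poisson parameter $\mu\left(A_{n}\right)=\mu\left(A\right)>0$, hence is a positive constant. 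The paper instead works on the suspension, testing $\widehat{T_{*}^{n}}I_{1}\left(-1_{A}\right)$ against $1_{N\left(A_{n}\right)=0}$ to get the lower bound $\mu\left(A\right)e^{-\mu\left(A\right)}$; your computation avoids the suspension entirely at this step and is, if anything, more transparent.

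Two points need attention. First, your sketch of the key equivalence is too quick in one direction: to conclude that triviality of $\bigcap_{n}T_{*}^{-n}\mathcal{A}^{*}$ follows from $\bigcap_{n}T^{-n}\mathcal{A}$ containing only null or infinite-measure sets, you need the identity $\bigcap_{n}\left(T^{-n}\mathcal{A}\right)^{*}=\left(\bigcap_{n}T^{-n}\mathcal{A}\right)^{*}$. The inclusion $\supset$ is immediate, but the inclusion $\subset$ (an intersection of Poisson $\sigma$-algebras is the Poisson $\sigma$-algebra of the intersection) is not; the paper invokes Lemma 3.3 of \cite{Roy07Infinite} for exactly this. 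Your ``noting that $N\left(A\right)$ is a.s.\ constant precisely when $\mu\left(A\right)\in\left\{ 0,+\infty\right\} $'' only handles the easy direction (non-triviality of the base tail implies non-triviality of the suspension tail). Second, you placed the density argument on the wrong side: the forward direction needs no density (Lin's theorem applies directly to $I_{1}\left(f\right)\in L_{0}^{1}\left(\mu^{*}\right)$ for $f\in L_{0}^{1}\left(\mu\right)$), whereas the converse is where it is indispensable, since your test function $1_{A}$ is not in $L_{0}^{1}\left(\mu\right)$ and the theorem's hypothesis is only stated there. The fix is exactly the tool you cite: $L_{0}^{1}\left(\mu\right)$ is $\left\Vert \cdot\right\Vert _{*}$-dense in $L^{\Phi}\left(\mu\right)$ and $\widehat{T}_{\mathcal{P}}$ is a $\left\Vert \cdot\right\Vert _{*}$-contraction, so the hypothesis for $L_{0}^{1}\left(\mu\right)$ upgrades to all of $L^{1}\left(\mu\right)$; just move that sentence to the converse.
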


To this end, we first need to establish the following characterization
of remotely infinite systems:
\begin{prop}
\label{prop:exactPoissonRemoteBase}$\left(X,\mathcal{A},\mu,T\right)$
is remotely infinite if and only if $\left(X^{*},\mathcal{A}^{*},\mu^{*},T_{*}\right)$
is exact.
\end{prop}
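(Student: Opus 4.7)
The plan is to prove both implications via the Fock space decomposition from Theorem \ref{thm:Fock}, exploiting that $N(A)\circ T_*^n = N(T^{-n}A)$ gives $T_*^{-n}\mathcal{A}^* = (T^{-n}\mathcal{A})^*$. Consequently the tail $\sigma$-algebra $\mathcal{A}^*_\infty := \bigcap_{n\ge 0} T_*^{-n}\mathcal{A}^*$ is assembled from the descending family $\{T^{-n}\mathcal{A}\}_n$ whose intersection is $\mathcal{A}_\infty := \bigcap_{n\ge 0} T^{-n}\mathcal{A}$, and exactness of $T_*$ is equivalent to $L^2(\mu^*, \mathcal{A}^*_\infty) = \mathbb{C}\cdot 1_{X^*}$.

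For the easy direction, assume $T$ is not remotely infinite: pick $A \in \mathcal{A}_\infty$ with $0 < \mu(A) < \infty$. For every $n$ we can write $A = T^{-n}A_n$; by Remark \ref{rem:Almostlinear} one has $I_1(\mathbf{1}_A) = I_1(\mathbf{1}_{A_n}\circ T^n) = I_1(\mathbf{1}_{A_n}) \circ T_*^n$, which is $T_*^{-n}\mathcal{A}^*$-measurable for every $n$. Hence the non-constant random variable $N(A) - \mu(A) = I_1(\mathbf{1}_A)$ is $\mathcal{A}^*_\infty$-measurable, so $T_*$ is not exact.

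For the harder direction, assume $T$ is remotely infinite. Since $T_*$ preserves each chaos $\mathcal{I}_2^{(k)}$, the orthogonal projection onto $\mathcal{I}_2^{(k)}$ commutes with $T_*^n$ (extending Lemma \ref{lem:projection} to all chaoses by the same argument). Therefore the chaos decomposition of any $F \in L^2(\mu^*, \mathcal{A}^*_\infty)$ has all its components still in $L^2(\mu^*, \mathcal{A}^*_\infty)$, and via the isometric intertwining $I_k(f)\circ T_*^n = I_k(f\circ T^{\otimes n})$ the piece in $\mathcal{I}_2^{(k)}$ identifies with $\bigcap_n L^2((T^{-n}\mathcal{A})^{\otimes k}, \mu^{\otimes k})_{\mathrm{sym}} = L^2\bigl(\bigcap_n (T^{-n}\mathcal{A})^{\otimes k}, \mu^{\otimes k}\bigr)_{\mathrm{sym}}$. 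It thus suffices to show, for each $k \ge 1$, that the tail $\sigma$-algebra $\bigcap_n (T^{-n}\mathcal{A})^{\otimes k}$ contains no set of finite positive $\mu^{\otimes k}$-measure.

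That is the main obstacle, and I would resolve it by a direct Fubini argument. If $C \in \bigcap_n (T^{-n}\mathcal{A})^{\otimes k}$ satisfied $0 < \mu^{\otimes k}(C) < \infty$, integrate out the last $k-1$ coordinates: $g(x) := \mu^{\otimes(k-1)}(C_x)$ is $T^{-n}\mathcal{A}$-measurable for every $n$, hence $\mathcal{A}_\infty$-measurable, and $g \in L^1(\mu)$ with $\int g\, d\mu = \mu^{\otimes k}(C) < \infty$. But every Markov level set $\{g > 1/m\}$ lies in $\mathcal{A}_\infty$ and has finite $\mu$-measure, so it is $\mu$-null by remote infinity of $T$. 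Thus $g = 0$ $\mu$-a.e., contradicting $\mu^{\otimes k}(C) > 0$. Combined with the chaos decomposition, $L^2(\mu^*, \mathcal{A}^*_\infty) = \mathcal{I}_2^{(0)} = \mathbb{C}\cdot 1_{X^*}$, i.e.\ $T_*$ is exact.
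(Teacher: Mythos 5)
Your proof is correct, but it takes a genuinely different route from the paper's. The paper disposes of the proposition in a few lines by quoting structural facts about Poisson $\sigma$-algebras: $(T^{-n}\mathcal{A})^{*}=T_{*}^{-n}\mathcal{A}^{*}$, the commutation of $\mathcal{B}\mapsto\mathcal{B}^{*}$ with decreasing countable intersections (Lemma 3.3 of \cite{Roy07Infinite}, cited without proof), and the criterion that $\mathcal{G}^{*}$ is trivial iff $\mathcal{G}$ contains only null or infinite-measure sets. You instead stay inside the toolbox of Section \ref{sec:Square-integrable-stochastic-integrals}: since $F\mapsto F\circ T_{*}^{n}$ is an isometry of $L^{2}\left(\mu^{*}\right)$ preserving each chaos, its range $L^{2}\left(T_{*}^{-n}\mathcal{A}^{*}\right)$ splits along the chaos decomposition, hence so does the tail, and the $k$-th piece is the image under $I_{k}$ of $\bigcap_{n}L^{2}\bigl(\left(T^{-n}\mathcal{A}\right)^{\otimes k}\bigr)_{\mathrm{sym}}$. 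Your Fubini--Markov argument killing any set of finite positive $\mu^{\otimes k}$-measure in $\bigcap_{n}\left(T^{-n}\mathcal{A}\right)^{\otimes k}$ is the real content: it is exactly where remote infinity (and not mere non-triviality of the tail) enters, and it wisely avoids any attempt to commute the intersection with the tensor power. What your route buys is a self-contained proof of the hard implication, in effect reproving the relevant case of the cited lemma; what it costs is length and two details you should make explicit: (i) the identification $\mathcal{I}_{2}^{\left(k\right)}\cap L^{2}\left(T_{*}^{-n}\mathcal{A}^{*}\right)=\left\{ I_{k}\left(h\right)\circ T_{*}^{n}\right\}$ uses injectivity of $F\mapsto F\circ T_{*}^{n}$ on each chaos, and (ii) the interchange $\bigcap_{n}L^{2}\left(\mathcal{F}_{n},m\right)=L^{2}\left(\bigcap_{n}\mathcal{F}_{n},m\right)$ for a decreasing family over the infinite measure $\mu^{\otimes k}$ cannot be obtained from reverse-martingale convergence ($\mu^{\otimes k}$ is not $\sigma$-finite on the tail), but follows by replacing a function that is $\mathcal{F}_{n}$-measurable mod $0$ for every $n$ with $\limsup_{j}f_{j}$ of genuine representatives $f_{j}$, which is $\mathcal{F}_{n}$-measurable for every $n$. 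The easy direction is essentially the same in both treatments.
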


\begin{proof}
Recall (see \cite{Roy07Infinite}) that if $\mathcal{B}\subset\mathcal{A}$
is a $\sigma$-algebra then $\mathcal{B}^{*}:=\sigma\left\{ N\left(A\right),\:A\in\mathcal{B}\right\} $
is said to be a \emph{Poisson $\sigma$-algebra}.

The proof of the Proposition is then a straightforward consequence
of the following facts
\end{proof}
\begin{itemize}
\item $\left(T^{-n}\mathcal{A}\right)^{*}=T_{*}^{-n}\mathcal{A}^{*}$
\item $\left(\cap_{n\ge0}T^{-n}\mathcal{A}\right)^{*}=\cap_{n\ge0}T_{*}^{-n}\mathcal{A}^{*}$
(see Lemma 3.3 in \cite{Roy07Infinite} for a proof)
\item A Poisson $\sigma$-algebra $\mathcal{G}^{*}$ for $\mathcal{G}\subset\mathcal{A}$
is trivial if and only if $\mathcal{G}$ contains only zero or infinite
measure sets (the Poisson random measure $N$ cannot distinguish between
zero measure sets nor between infinite measure sets).
\end{itemize}
We can then prove our theorem:
\begin{proof}
\emph{(of Theorem \ref{thm:remotelyDualOp})}. Assume $\left(X,\mathcal{A},\mu,T\right)$
is remotely infinite. Then $\left(X^{*},\mathcal{A}^{*},\mu^{*},T_{*}\right)$
is exact from Proposition \ref{prop:exactPoissonRemoteBase}, and
thus, for any $f\in L_{0}^{1}\left(\mu\right)$,
\[
\left\Vert \widehat{T_{*}^{n}}I_{1}\left(f\right)\right\Vert _{L^{1}\left(\mu^{*}\right)}\to0
\]

But, we have seen in Proposition \ref{prop:CoincidenceDual} that
$\widehat{T_{*}^{n}}I_{1}\left(f\right)=I_{1}\left(\widehat{T^{n}}f\right)$,
therefore
\begin{align*}
\left\Vert \widehat{T_{*}^{n}}I_{1}\left(f\right)\right\Vert _{L^{1}\left(\mu^{*}\right)} & =\left\Vert I_{1}\left(\widehat{T^{n}}f\right)\right\Vert _{L^{1}\left(\mu^{*}\right)}\\
 & =\left\Vert \widehat{T^{n}}f\right\Vert _{*}.
\end{align*}

For the converse, assume that for all $f\in L_{0}^{1}\left(\mu\right)$,
$\left\Vert \widehat{T^{n}}f\right\Vert _{*}\to0$ as $n\to+\infty$.

Recall that $\widehat{T^{n}}_{\mathcal{P}}$ is an isometry on $L^{\Phi}\left(\mu\right)$
and from Proposition \ref{prop:CoincidenceDual}, $\widehat{T^{n}}=\widehat{T^{n}}_{\mathcal{P}}$
on $L_{0}^{1}\left(\mu\right)$, thus, for any $g\in L^{\Phi}\left(\mu\right)$
and $f\in L_{0}^{1}\left(\mu\right)$:

\begin{align*}
\left\Vert \widehat{T^{n}}_{\mathcal{P}}g\right\Vert _{*} & =\left\Vert \widehat{T^{n}}_{\mathcal{P}}\left(g-f\right)+\widehat{T^{n}}_{\mathcal{P}}f\right\Vert _{*}\\
 & \le\left\Vert \widehat{T^{n}}_{\mathcal{P}}\left(g-f\right)\right\Vert _{*}+\left\Vert \widehat{T^{n}}_{\mathcal{P}}f\right\Vert _{*}\\
 & \le\left\Vert g-f\right\Vert _{*}+\left\Vert \widehat{T^{n}}f\right\Vert _{*}
\end{align*}

That is
\[
\limsup_{n\to\infty}\left\Vert \widehat{T^{n}}_{\mathcal{P}}g\right\Vert _{*}\le\left\Vert g-f\right\Vert _{*}.
\]

We conclude by density of $L_{0}^{1}\left(\mu\right)$ in $L^{\Phi}\left(\mu\right)$
(see Theorem \ref{thm:Orlicz}).

In other words, from Proposition \ref{prop:CoincidenceDual} we have
proved that, for any $F\in\mathcal{I}_{1}$:

\[
\left\Vert \widehat{T_{*}^{n}}F\right\Vert _{L^{1}\left(\mu^{*}\right)}\to0
\]

as $n\to+\infty$.

For the converse we follow the same lines of arguments used in the
proof of Theorem \ref{thm:lin}: assume that $\left(X,\mathcal{A},\mu,T\right)$
is not remotely infinite, this means that there exists $A\in\cap_{n\ge0}T^{-n}\mathcal{A}$
such that $0<\mu\left(A\right)<\infty$ and we can set, for any $n\in\mathbb{N}$,
$A_{n}$ such that $A=T^{-n}A_{n}$.

Consider $1_{N\left(A\right)=0}\in L^{\infty}\left(\mu^{*}\right)$,
we have $1_{N\left(A_{n}\right)=0}\circ T_{*}^{n}=1_{N\left(T^{-n}A_{n}\right)=0}=1_{N\left(A\right)=0}$.

Then, for any $n\in\mathbb{N}$,
\begin{align*}
\left\Vert \widehat{T_{*}^{n}}I_{1}\left(-1_{A}\right)\right\Vert _{L^{1}\left(\mu^{*}\right)} & \ge\mathbb{E}_{\mu^{*}}\left[\left|1_{N\left(A_{n}\right)=0}\cdot\widehat{T_{*}^{n}}I_{1}\left(-1_{A}\right)\right|\right]\\
 & \ge\mathbb{E}_{\mu^{*}}\left[1_{N\left(A_{n}\right)=0}\cdot\widehat{T_{*}^{n}}I_{1}\left(-1_{A}\right)\right]\\
 & \ge\mathbb{E}_{\mu^{*}}\left[1_{N\left(A_{n}\right)=0}\circ T_{*}^{n}\cdot I_{1}\left(-1_{A}\right)\right]\\
 & =\mathbb{E}_{\mu^{*}}\left[1_{N\left(A\right)=0}\cdot I_{1}\left(-1_{A}\right)\right]\\
 & =\mathbb{E}_{\mu^{*}}\left[1_{N\left(A\right)=0}\cdot\left(-N\left(A\right)+\mu\left(A\right)\right)\right]\\
 & =\mu\left(A\right)\mathbb{E}_{\mu^{*}}\left[1_{N\left(A\right)=0}\right]\\
 & =\mu\left(A\right)e^{-\mu\left(A\right)}>0.
\end{align*}

This contradicts that for any $F\in\mathcal{I}_{1}$, $\left\Vert \widehat{T_{*}^{n}}F\right\Vert _{L^{1}\left(\mu^{*}\right)}\to0$
.
\end{proof}
Combining Proposition \ref{thm:remotelyDualOp} and the last point
of Theorem \ref{thm:Orlicz}, we obtain:
\begin{cor}
$\left(X,\mathcal{A},\mu,T\right)$ is not remotely infinite if there
exists a non-negative $f\in L^{1}\left(\mu\right)$ such that
\[
\liminf_{n\to\infty}\widehat{T^{n}}f\neq0\;\mu\text{-a.e.}
\]
\end{cor}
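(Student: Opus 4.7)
The plan is to prove the contrapositive: assume $T$ is remotely infinite, and show that for every non-negative $f\in L^{1}(\mu)$ one has $\liminf_{n\to\infty}\widehat{T^{n}}f=0$ $\mu$-a.e.

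First I would invoke Theorem \ref{thm:remotelyDualOp}, which gives, for any $f\in L^{1}(\mu)$,
\[
\bigl\Vert\widehat{T^{n}}f\bigr\Vert_{*}\to0
\]
as $n\to\infty$. By Urbanik's theorem (Theorem \ref{thm:Urbanik}), $\left\Vert\cdot\right\Vert_{*}$ and $\left\Vert\cdot\right\Vert_{\Phi}$ are equivalent on $L^{\Phi}(\mu)$, so the same sequence tends to $0$ in the Orlicz norm $\left\Vert\cdot\right\Vert_{\Phi}$.

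Next, I would appeal to the last bullet of Theorem \ref{thm:Orlicz}: from any $\left\Vert\cdot\right\Vert_{\Phi}$-convergent sequence one can extract a $\mu$-a.e. convergent subsequence. Applied to $\{\widehat{T^{n}}f\}_{n}$, this yields a strictly increasing sequence $\{n_{k}\}_{k\in\mathbb{N}}$ such that $\widehat{T^{n_{k}}}f\to0$ $\mu$-a.e. Since $f\ge0$ and $\widehat{T}$ is a positive operator, each $\widehat{T^{n}}f$ is non-negative, hence
\[
0\le\liminf_{n\to\infty}\widehat{T^{n}}f\le\lim_{k\to\infty}\widehat{T^{n_{k}}}f=0\quad\mu\text{-a.e.},
\]
which gives $\liminf_{n\to\infty}\widehat{T^{n}}f=0$ $\mu$-a.e. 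Taking the contrapositive yields exactly the statement of the corollary.

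There is essentially no obstacle here: the corollary is a direct combination of Theorem \ref{thm:remotelyDualOp}, the norm equivalence of Theorem \ref{thm:Urbanik}, the extraction-of-subsequence property from Theorem \ref{thm:Orlicz}, and the positivity of $\widehat{T}$. The only mild subtlety is remembering to use positivity to pass from ``a subsequence tends to $0$'' to ``the $\liminf$ equals $0$''—without non-negativity of $f$ one could only conclude that the $\liminf$ of $|\widehat{T^{n}}f|$ vanishes along a subsequence.
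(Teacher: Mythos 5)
Your proof is correct and is precisely the argument the paper intends: it states the corollary as obtained by ``combining Proposition \ref{thm:remotelyDualOp} and the last point of Theorem \ref{thm:Orlicz}'', which is exactly your chain (norm convergence of $\widehat{T^{n}}f$, passage to an a.e.\ convergent subsequence, and positivity of $\widehat{T}$ to upgrade the subsequential limit to $\liminf_{n}\widehat{T^{n}}f=0$). The only citation quibble is that Theorem \ref{thm:remotelyDualOp} is literally stated for $f\in L_{0}^{1}\left(\mu\right)$, whereas a non-negative $f\in L^{1}\left(\mu\right)$ is not centered; but the density argument in that theorem's proof (Proposition \ref{prop:densityL^1_0} plus the contraction property of $\widehat{T}_{\mathcal{P}}$) extends the convergence to all of $L^{\Phi}\left(\mu\right)$, so your use of it for $f\in L^{1}\left(\mu\right)$ is justified.
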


Observe that Fatou lemma gives, if $\liminf_{n\to\infty}\widehat{T^{n}}f\neq0$:
\[
0<\int_{X}\left(\liminf_{n\to\infty}\widehat{T^{n}}f\right)d\mu\le\liminf_{n\to\infty}\left\Vert \widehat{T^{n}}f\right\Vert _{1}
\]

which only implies that $\left(X,\mathcal{A},\mu,T\right)$ is not
exact.

\section{Conclusion}

We aimed to have illustrated the canonical character of $L^{\Phi}\left(\mu\right)$
and think that this space could be a new reference space when studying
infinite measure dynamical systems. If the Poisson-Orlicz norm is
by far the preferred choice when looking at the interplay between
a system and its Poisson suspension as we did, the actual computation
of the norm of a particular function is a mathematical challenge on
its own ! However the equivalent Orlicz norm is always available and
much easier to compute in concrete situations, making the use of $L^{\Phi}\left(\mu\right)$
quite flexible.

\bibliographystyle{plain}
\bibliography{biblioNew.bib}

\end{document}